\theoremstyle{plain}
\newtheorem{Theorem}{Theorem} 
\newtheorem{Definition}[Theorem]{Definition}
\newtheorem{Lemma}[Theorem]{Lemma}
\newtheorem{Proposition}[Theorem]{Proposition}
\newtheorem{Corollary}[Theorem]{Corollary}
\theoremstyle{definition}
\theoremstyle{remark}
\newenvironment{example}
  {\pushQED{\qed}\examplex}
  {\popQED\endexamplex}
\newtheorem{Remark}[Theorem]{Remark}
\newtheorem{History}[Theorem]{Historical Remark}
 \newcommand{\NN}{\mathbb{N}}
  \newcommand{\cT}{{\sf{T}}}
 \newcommand{\cN}{{\sf{N}}}
  \newcommand{\cG}{{\sf{G}}}
 \newcommand{\mult}{\textrm{M}}
 \newcommand{\mcm}{\textrm{lcm}}
  \newcommand{\mcd}{\textrm{GCD}}
  \newcommand{\ck}{{\mathbf{k}}}
  \newcommand{\st}{{\textrm{such that\ }}}
 \newcommand{\kT}{\mathcal{T}}
\def\Forall{\mbox{\ for each }}
\def\Either{\mbox{\ either }}
\def\Or{\mbox{\ or }}
\def\Bbb#1{{\mathbb #1}}
\def\Cal#1{{\cal #1}}
\def\then{\;\Longrightarrow\;}
\def\lcm{\mathop{\rm lcm}\nolimits}
 \title{Combinatorics of involutive divisions}
 \author{M.Ceria}
 \date{}
\begin{document}
\newpage
\fussy
\maketitle
\begin{abstract}
The classical involutive division theory by Janet decomposes in the same way both the ideal and the escalier. 
The aim of this paper, following Janet's approach, is to discuss  the combinatorial properties of involutive divisions, when defined on the set  of all terms in a fixed degree $D$, postponing  the discussion of ideal membership and related test.
\\
We adapt the theory by Gerdt and Blinkov, introducing relative involutive divisions and then, 
given a complete description of the combinatorial structure of a relative 
involutive division, we  turn our attention to the problem of membership.
In order to deal with this problem, we introduce two graphs as tools; one is strictly related to Seiler's $L$-graph, whereas the second generalizes it, to cover the case of "non-continuous" (in the sense of Gerdt-Blinkov) relative involutive divisions.
Indeed, given an element in the ideal (resp. escalier), walking backwards (resp. forward) in the graph, we can identify all the other generators of the ideal (resp. elements of degree $D$ in the escalier).
\end{abstract}

\section{Introduction}\label{Intro}
Denote by $\mathcal{P}:=\mathbf{k}[x_1,...,x_n]$ the graded ring of polynomials in
$n$ variables with coefficients in the field $\bf{k}$,  $char({\bf{k}})=0$ and by
$\mathcal{T}\!:=\!\{x^{\gamma}\!:=\!x_1^{\gamma_1}\cdots
x_n^{\gamma_n} \vert \,\gamma\!=\!(\gamma_1,...,\gamma_n)\in {\mathbb{N}}^n \}$
the \emph{semigroup of terms} generated by the set $\{x_1,...,x_n\}$.

Given a monomial/semigroup ideal $J\subset\mathcal{T}$ and its minimal set of generators ${\sf G}(J)$  (also called its \emph{monomial basis}),
  Janet introduced in  \cite{J1} both the notion of \emph{multiplicative variables} and the connected decomposition of $J$ into disjoint 
  \emph{cones}. Then, he gave a procedure (\emph{completion})   
  to produce such a decomposition.

In the same paper, in order to describe Riquier's \cite{Riq} formulation 
of the description for the general solutions of a PDE problem, Janet 
gave a similar decomposition in terms of disjoint cones, 
generated by multiplicative variables, also for the related normal set/order ideal/\emph{escalier}
${\bf{N}}(J):=\mathcal{T}\setminus J.$

 Later in \cite{J2,J3,J4}, he gave a completely different decomposition (and the related algorithm for computing it) which labelled as \emph{involutive} and which is behind   both Gerdt-Blinkov 
\cite{GB1, GB2, GB3} procedure for computing Gr\"obner bases and Seiler's \cite{SeiB} theory of involutiveness.

The aim of Janet in these three papers was twofold:
\begin{enumerate}
 \item to reinterpret, in terms of multiplicative variables and cone decomposition, the solution of  PDE problems given by Cartan \cite{Car1, Car2, Car3}, whence the name \emph{inolutiveness};
 \item to re-evaluate within his theory the notion of \emph{generic initial ideal} introduced by Delassus \cite{Del1, Del2, Del3} and the correction of his mistake by Robinson \cite{Rob1, Rob2} and 
Gunther \cite{Gun1, Gun2}, who point out that the notion requires $J$ to be  Borel-fixed 
 (a modern but identical reformulation was proposed by Galligo  \cite{GAL}, who merged the considerations of Hironaka \cite{Hir} and Grauert \cite{Gra}; see also \cite{ GS} and \cite{Ei}); Janet 
remarked that all Borel-fixed ideals are involutive, but the converse is false.
\end{enumerate}

More precisely, in his survey \cite{J3} Janet presents, as \emph{nouvelle formes cannoniques}, the results by Delassus,
Robinson and Gunther and compares them with the one    
which can be deduced from an involutive basis;
and in  \cite[p.62]{J4}, assuming to have a  given a homogeneous  ideal $\mathcal{I} \subset\mathcal{P}$ within a generic frame of coordinates,  he reformulates Riquier's completion proposing 
essentially a Macaulay-like construction, iteratively computing the vector-spaces 
$\mathcal{I}_d :=\{f\in\mathcal{I} : \deg(f) = d\}$ until Cartan test grants that Castelnuovo-Mumford \cite[pg.99]{Mum} regularity $D$ has been reached. This would allow him to consider
the  monomial ideal ${\sf T}(\mathcal{I})$ of the leading terms (in the sense of Gr\"obner basis theory) w.r.t. a deg-lex term-ordering and obtain the related involutive reduction required by 
Riquier's procedure.

The results on involutiveness presented in both papers, however, simply restate the results of \cite{J2} which reinterprets Cartan's 
result in terms of multiplicative variables; more precisely Janet assumes to have a set of forms of degree $D$ which satisfies  Cartan test and directly considers both
the  monomial ideal $${\sf T}:={\sf T}(\mathcal{I})\subset \mathcal{T}_{\geq D} =:\{t\in\mathcal{T}, deg(t)\geq D\}$$
and  the partial \emph{escalier} $${\sf N} := \mathcal{T}_{\geq D}\setminus {\sf T} = \{\tau\in{\sf N}(\mathcal{I}) :deg(\tau)\geq  D\}$$ decomposing both of them in terms of disjoint cones, generated 
by multiplicative variables. Actually, he simply explicitly demotes the r\^ole of the ideal in this construction\footnote{\emph{La proposition est vraie en particuier pour le syst\`eme involutif 
constitu\'e par \emph{tout} les monomes d'ordre [$D$].}
\cite[p.46]{J2}}
  considering the whole set $\mathcal{T}_{\geq D}$ and decomposing it in terms of disjoint cones generated by multiplicative variables, the related set of vertices being the set of all monomials 
$\mathcal{T}_D =\{\tau\in \mathcal{T} : \deg(\tau)=D\}$.

\medskip

The aim of this paper is to discuss involutiveness following the approach proposed by Janet in \cite{J2}; 
in particular we postpone the discussion of ideal membership and related test only after 
having performed a deep reconsideration of the combinatorial properties of involutive divisions \cite{GB1,GB2,GB3},
when defined on the set  $\mathcal{T}_D$.

To do so, we of course apply the theory of involutive divisions, set  up by Gerdt--Binklov \cite{GB1,GB2,GB3},
but we are forced to slightly adapt it, talking about \emph{relative} 
involutive divisions, and requiring that the union of all the cones produces the ideal $\mathcal{T}_{\geq D}$ 
and that the cones are disjoint; in fact our setting considers the  \emph{single} finite set  $\mathcal{T}_D$ and thus does 
not require (as, of course, they need) comparing different divisions. 

Moreover, the  aim of their theory is to produce a setting for describing and building a Riquier-Janet procedure for computing Gr\"obner-like bases for ideals; 
thus they cannot assume neither that the division is 
involutive, \emph{i.e.} that the union of all the cones defined on a set $U$ produces the semigroup ideal generated by $U$ 
(this being in their setting the aim of the procedure) nor uniqueness of involutive 
divisors, \emph{i.e.} that
that all cones are disjoint (the failure of this condition triggering the completion procedure). 
These two conditions are instead essential to grant that (the implicit procedure is completed and that) 
a unique decomposition is available both for the given ideal (granting unique reduction) and its associated 
\emph{escalier} (granting standard Hironaka-like description of canonical forms).

We discuss the combinatorial structure of relative 
involutive divisions; we begin with the combinatorial formula given by Janet \cite{J1,J2,J3} and Gunther \cite[p.184]{Gun3} evaluating, for each $i, 1\leq i \leq n$, the number $\sigma_i$ of the cones 
having $i$ multiplicative variables and which is, essentially, an adaptation of Vandermonde's convolution \cite[pg.492]{Vdm}; next we prove a set of Lemmata,
which allows us  to sketch an approach for imposing a relative 
involutive division structure on $\mathcal{T}_{\geq D}$ and which will be generalized 
  to a procedure to list all the possible relative involutive divisions up to symmetries

We further characterize the relative 
involutive divisions which are Pommaret divisions up to a relabelling of the variables.

Thus, given a complete description of the combinatorial structure of a relative 
involutive division, we  turn our attention to the problem of membership.
Let us begin with the trivial remarks that if a term $u\in\mathcal{T}_D$ is contained (or is a generator) of the  monomial ideal ${\sf T}$, 
\begin{itemize}
\item the whole cone whose vertex is $u$ is contained in the ideal and that
\item for each non-multiplicative variable $x$, there is necessarily a term $v \in\mathcal{T}_D$, $v \neq u$ s.t. $xu$ belongs to the cone whose vertex is $v$ and that such vertex (and cone) 
necessarily belongs to ${\sf T}$;
\item conversely, if  $v$ belongs to ${\sf N}$, not only its related cone belongs to ${\sf N}$, but the same holds to $u$ and its related cone.
\end{itemize}
Moreover if ${\sf T}$ is not trivial then both
\begin{itemize}
 \item the single monomial $m$ which has no non-multiplicative variables (called ``peak'' throughout the paper), and its cone  necessarily belong to ${\sf T}$ while
 \item for at least a value $i, 1\leq i \leq n$, $x_i^D\in{\sf N}$.
\end{itemize}

On the basis of these remarks, we can define on  $\mathcal{T}_D$ a rooted directed graph whose root is $m$
  and where an arrow $u\rightarrow v$ is given when, for a non 
multiplicative variable $x_i$ for $u$, $x_iu$ belongs to the cone whose vertex is $v$. Of course such graph is redundant and our aim is to give a (more compact, non necessarily minimal) directed graph 
which has the following properties:
\begin{itemize}
 \item if a vertex $h$ is included in ${\sf T}$ and we walk against the flow towards the ``peak'' $m$, we reach all the terms in $\mathcal{T}_D$ which necessarily belong to ${\sf T}$ too; and
\item if a vertex $n$ is included in ${\sf N}$ and we follow the flow towards the ``mouths'' we reach all the terms in $\mathcal{T}_D$ which necessarily belong to ${\sf N}$ too.
\end{itemize}

We begin our investigation   giving conditions (based on the computation of $\mcm$'s),  which, for each $t\in{\sf T}$, $s\in U$, allows to deduce further elements $X(t,s)$ (actually 
the vertex of the cone containing $\mcm(s,t)$) which are necessary members of ${\sf T}$. Then we will give analogous $\mcm$-based conditions for ${\sf N}.$

Next  we specialize our investigation to Pommaret divisions for which we prove that it is sufficient to 
adapt Ufnarovsky graph \cite{U1,U2,U3,U4} to the commutative case in order to obtain a graph 
which has exactly the shape and properties described above.

Unfortunately, in general, a graph with the properties described above cannot exist; 
in fact we show an example in $n$ variables and degree $d=n-1$, in which the $d$ monomials with $n-1$ multipicative variables 
are connected together, via their single non-multiplicative variables, in a loop which walks around the ``peak'' $m$; moreover these $n$ monomials having either $n$ or $n-1$ multiplicative variables are s.t if one of them belongs to either ${\sf T}$ the same happens for all of them.
Moreover, if one of the terms with $n-1$ multiplicative variables lies in  ${\sf N}$, then all terms with  $n-1$ multiplicative variables lie in  ${\sf N}$, but they do not impose any condition on that with $n$ multiplicative variables.

Thus, in general, it is impossible to produce a graph as the Ufnarovsky-like   existing for Pommaret division and which has the required structure, simply by multiplying each monomial $t$ by its 
non-mutiplicable variables $x$ and the graph is obtained recording the cone in which $xt$ belongs.

The only way we are seeing for producing a graph with the required properties is to build the rendundant graph which can be obtained by testing the condition on $\mcm(s,t)$ 
  and extract a minimal subgraph, an approach which in general is NP-complete.
 

\section{Some general notation}\label{Notation}
Throughout this paper, in connection with monomial ideals, we mainly follow the notation of \cite{SPES}.
\\
We denote by $\mathcal{P}:=\mathbf{k}[x_1,...,x_n]$ the graded ring of polynomials in
$n$ variables with coefficients in the field $\ck$.\\
The \emph{semigroup of terms}, generated by the set $\{x_1,...,x_n\}$ is:
$$\mathcal{T}:=\{x^{\gamma}:=x_1^{\gamma_1}\cdots
x_n^{\gamma_n} \vert \,\gamma:=(\gamma_1,...,\gamma_n)\in \NN^n \}.$$
If $\tau=x_1^{\gamma_1}\cdots x_n^{\gamma_n}$, then $\deg(\tau)=\sum_{i=1}^n
\gamma_i$ is the \emph{degree} of $\tau$ and, for each $h\in \{1,...,n\}$
$\deg_h(\tau):=\gamma_h$ is the $h$-\emph{degree} of $\tau$.\\
\\
For each $d \in \NN$, $\mathcal{T}_d$ is the $d$-degree part of
$\mathcal{T}$, i.e.
$\mathcal{T}_d:=\{x^\gamma \in \kT \vert \, \deg(x^\gamma)=d\}$ 
and it is well known that $\vert \mathcal{T}_d \vert = {n+d-1 \choose d}$. For
each subset $M\subseteq \mathcal{T}$ we set $M_d=M\cap
\mathcal{T}_d$. The symbol $\mathcal{T}(d)$ denotes the degree $\leq d$ part of
$\mathcal{T}$, namely $\mathcal{T}(d)=\{x^\gamma \in \kT\vert \, \deg(x^\gamma)\leq d\}$.
\noindent For each term $\tau \in \mathcal{T}$ and $x_j \vert  \tau$, the only $\upsilon
\in \mathcal{T}$ \st $\tau=x_j\upsilon$ is called $j$-th \emph{predecessor} of
$\tau$.\\
A \emph{semigroup ordering} $<$ on $\mathcal{T}$  is  a total ordering
\st
$ \tau_1<\tau_2 \Rightarrow \tau\tau_1<\tau\tau_2,\, \forall \tau,\tau_1,\tau_2
\in \mathcal{T}$. For each semigroup ordering $<$ on $\mathcal{T}$,  we can represent a polynomial
$f\in \mathcal{P}$ as a linear combination of terms arranged w.r.t. $<$, with
coefficients in the base field $\mathbf{k}$:
$$f=\sum_{\tau \in \mathcal{T}}c(f,\tau)\tau=\sum_{i=1}^s c(f,\tau_i)\tau_i:\,
c(f,\tau_i)\in
\mathbf{k}^*,\, \tau_i\in \mathcal{T},\, \tau_1>...>\tau_s,$$ with
$\cT(f)=Lt(f):=\tau_1$   the 
\emph{leading term} of $f$, $Lc(f):=c(f,\tau_1)$ the  \emph{leading
coefficient} 
of $f$ and $tail(f):=f-c(f,\cT(f))\cT(f)$  the 
\emph{tail} of $f$.
\\
A \emph{term ordering} is a semigroup ordering \st $1$ is lower 
than every variable or, equivalently, it is a \emph{well ordering}.\\
Given a term $\tau \in \kT$, we  denote by $\min(\tau)$ the smallest 
variable $x_i$, $i \in \{1,...,n\}$, s.t. $x_i\mid \tau$ and, analogously, we  denote by $\max(\tau)$ 
the biggest  
variable appearing in $\tau$ with nonzero exponent.
\\
\smallskip

A subset $J \subseteq \kT$ is a \emph{semigroup ideal} if  $\tau \in J 
\Rightarrow \sigma\tau \in J,\, \forall \sigma \in \mathcal{T}$; a subset ${\sf N}\subseteq \mathcal{T}$ is a normal set  (or \emph{order ideal})  if
$\tau \in {\sf N} \Rightarrow \sigma \in {\sf N}\, \forall \sigma \vert \tau$.  We have that ${\sf N}\subseteq \mathcal{T}$ is an order ideal if and only if 
$\mathcal{T}\setminus {\sf N}=J$ is a semigroup ideal.
\\
\smallskip

Given a semigroup ideal $J\subset\mathcal{T}$  we define ${\sf 
N}(J):=\mathcal{T}\setminus J$. The minimal set of generators ${\sf G}(J)$ of $J$, called the \emph{monomial basis} of $J$, satisfies the conditions below
\begin{eqnarray*}
{\sf G}(J)&:=&\{\tau \in J\, \vert \, \textrm{ each predecessor of }\, \tau
\in \cN(J)\}\\
&=&\{\tau \in \mathcal{T}\, \vert \,\cN(J)\cup\{\tau\}\, \textrm{is an order ideal},
\, \tau \notin \cN(J)\}.
\end{eqnarray*}
\noindent For all subsets $G \subset \mathcal{P}$,  $\cT\{G\}:=\{\cT(g),\, g \in  G\}$ and $\cT(G)$ is the semigroup ideal
of leading terms defined as $\cT(G):=\{\tau \cT(g),\, \tau \in \mathcal{T}, g \in G\}$. 
\\
Fixed a term order $<$, for any ideal  $I
\triangleleft \mathcal{P}$ the monomial basis of the semigroup ideal 
$\cT(I)=\cT\{I\}$ is called \emph{monomial basis}  of $I$ and denoted again by $\cG(I)$,
whereas the ideal 
$In(I):=(\cT(I))$ is called \emph{initial ideal} and the order ideal 
$\cN(I):=\kT \setminus \cT(I)$ is called \emph{Groebner escalier} of $I$. 

%

\section{Involutive divisions}\label{DivInvol}
In this section, following \cite{GB1,GB2}, we recall the main definitions and properties
 of involutive divisions. We also define, as an example of involutive division, Janet and
 Pommaret divisions; the latter will be very important in what follows, besides 
 being important for its link with generic initial ideals (see section \ref{Gin}).
\\
First of all, we recall  the definition of involutive division.
\begin{Definition}[Gerdt-Blinkov, \cite{GB1}]\label{Dinv}
An {\em involutive division}\index{division!involutive}\index{involutive!division} $L$ or {\em $L$-division}
on ${\Cal T}$ is a relation $\mid_L$ defined, for each finite set $U\subset{\Cal T}$, on the set
 $U \times {\Cal T}$ in such a way that the following holds for each $u,u_1\in U$
and $t, t_1\in{\Cal T}$
\begin{enumerate}
\renewcommand\theenumi{{\rm (\roman{enumi})}}
\item $u \mid_L t \then u \mid t$;
\item $u \mid_L u$ for each $u\in U$;
\item $u \mid_L ut, u \mid_L ut_1 \iff
u \mid_L utt_1$;
\item $u\mid_Lt, u_1\mid_Lt \then \Either 
u\mid_Lu_1 \Or u_1\mid_Lu$;
\item $u \mid_L u_1, u_1 \mid_L t \then u \mid_L t;$
\item if $V \subseteq U$ and $u\in V$ then 
$u \mid_L t$ w.r.t. $U \then u \mid_L t$ w.r.t. $V$.
\end{enumerate}

If $u \mid_L t = uw$, $u$ is called an {\em involutive divisor}
of $t$, $t$ is called an {\em involutive multiple}
of $u$ and $w$ is said to be {\em multiplicative} for
$u$.

If $u \nmid_L t = uw$, $w$ is said to be {\em non-multiplicative} for
$u$.
\qed\end{Definition}

This definition, for each set $U$ and each $u \in U$, partitions the set of variables in two subsets
\begin{itemize}
\item $M_L(U,u)$, containing the variables $x_i$
multiplicative for $u$:
$$x_i\in M_L(U,u)\iff u \mid_L u x_i;$$
\item $ NM_L(U,u)$, containing the variables $x_i$
non-multiplicative for  $u$:
$$x_i\in NM_L(U,u)\iff u \nmid_L u x_i.$$
\end{itemize}

Finally, for each involutive division $L$, each finite set $U\subset{\Cal T}$ and each
$u\in U$,
we denote by $L(u,U)$ the \emph{multiplicative set } for $u$, i.e. the 
set of all the terms $w\in{\Cal T}$
which are multiplicative for
$u$:
$$L(u,U) := \{w\in{\Cal T} : 
u \mid_L  uw \}.$$
Remark that condition (iii) implies that each 
$L(u,U)$ is completely characterized by the partition
$$\{x_1,\ldots,x_n\} =  M_L(U,u)\sqcup NM_L(U,u)$$
since
$$L(u,U) = \{
x_1^{a_1}x_2^{a_2}\cdots x_n^{a_n} :
a_i \neq 0 \then x_i\in  M_L(U,u)\}.$$

With this notation it is easy to realize that the definition of
involutive division can be formulated as follows:
\begin{Definition}[Gerdt---Blinkov]\label{InvDef} An {\em involutive division} $L$ or {\em $L$-division}
on ${\Cal T}$ is the assignement, for each finite set $U\subset{\Cal T}$
and each term $u\in U$ of a submonoid 
$L(u,U)\subset{\Cal T}$ such that the following holds for each $u,u_1\in U$
and $t, w\in{\Cal T}$
\begin{enumerate}
\renewcommand\theenumi{{\rm (\alph{enumi})}}
\item $t\in L(u,U), t_1 \mid t \then t_1\in L(u,U)$,
\item if $uL(u,U) \cap u_1L(u_1,U) \neq \emptyset$ then 
$u \in u_1L(u_1,U) $ or $u_1 \in uL(u,U) $;
\item if 
$u_1 = uw$ for some $w\in L(u,U)$, 
then $L(u_1,U) \subseteq L(u,U)$;
\item if $V \subseteq U$
then
$L(u,U) \subseteq L(u,V)$ for each $u\in V$.
\end{enumerate}
\end{Definition}
The next definitions \ref{IAI} and \ref{Linv} state some properties
that a set $U \subseteq \kT$, w.r.t. an involutive division $L$, may satisfy.

\begin{Definition}[\cite{GB1}]\label{IAI} 
A finite set $U\subset{\Cal T}$ 
is called 
\begin{itemize}
\item {\em involutively autoreduced} w.r.t. the
division $L$ or {\em $L$-autoreduced} if
it does not contain elements $L$-divisible by other elements in U.
\item
{\em involutive} w.r.t. the
division $L$ or {\em $L$-involutive} if $\forall u \in U$, $\forall w \in \kT$ $\exists v \in U$ s.t. $v\mid_L uw$.
\end{itemize}
\end{Definition}

\begin{Definition}[\cite{GB1}]\label{Linv}
A set $U$ is called  {\em locally involutive} with respect to the involutive division $L$ if $\forall u \in U$
$ \forall x_i \in NM_L(u,U)$ $\exists v \in U$ s.t. $v \mid_L ux_i$.
\end{Definition}

With the following definition, we introduce the concept of \emph{continuity} for an  involutive division $L$.

\begin{Definition}[\cite{GB1}]\label{Contin} The involutive division $L$ is called 
{\em continuous}\index{division!involutive!continuous}\index{involutive!division!continuous} if  for each finite  set $U\subset{\Cal T}$,
and each finite sequence of terms in $U$
$$w_1,\ldots,w_j,\ldots,w_J, $$
such that, for each $j <J$, there is $x_{i_j}\in NM_L(U,w_j) : 
w_{j+1} \mid_L w_j\cdot x_{i_j}$
the inequality
$w_j \neq w_i$ holds for each $j \neq i$ 
 \end{Definition}

 \begin{Proposition}[\cite{GB1}]\label{contin invol}
If an involutive division $L$ is continuous then local involutivity of any set $U$ implies its
involutivity.
 \end{Proposition}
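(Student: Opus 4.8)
The plan is to prove the contrapositive direction efficiently by exploiting continuity to forbid the one obstruction that separates local involutivity from full involutivity. Recall that $U$ is locally involutive if every product $u x_i$ with $x_i$ non-multiplicative admits an involutive divisor in $U$, whereas involutivity demands an involutive divisor for \emph{every} product $uw$, $w\in\kT$. First I would reduce the general multiple $uw$ to a sequence of single-variable steps: writing $w$ as a product of variables, it suffices to show that if $v\mid_L uw$ then $v'\mid_L uwx_i$ for some $v'\in U$ and every variable $x_i$, since then one can build up $uw$ from $u$ one variable at a time. If $x_i$ is multiplicative for $v$ we are done immediately by property (iii)/(c), taking $v'=v$. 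The only real content is the case where $x_i\in NM_L(U,v)$.

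In that remaining case, local involutivity supplies some $w_1\in U$ with $w_1\mid_L v x_i$, hence $w_1\mid_L uwx_i$ by transitivity (property (v)/(e)). If $x_i$ (or, more precisely, the complementary variable needed to reach $uwx_i$) is multiplicative for $w_1$ we stop; otherwise local involutivity gives $w_2\in U$ with $w_2\mid_L w_1\cdot x_{i_1}$ for a non-multiplicative $x_{i_1}$, and again $w_2\mid_L uwx_i$. Iterating this construction produces a sequence $v=w_0,w_1,w_2,\dots$ in $U$, where at each step $w_{j+1}\mid_L w_j x_{i_j}$ for some $x_{i_j}\in NM_L(U,w_j)$, and crucially each $w_j$ is an involutive divisor of the \emph{fixed} term $uwx_i$. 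This is exactly the configuration appearing in the definition of continuity (Definition \ref{Contin}).

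The key step, and the place where the hypothesis is used decisively, is termination of this sequence. By continuity all the $w_j$ are pairwise distinct; since $U$ is a finite set the sequence cannot continue indefinitely, so after finitely many steps we reach a $w_J$ for which the variable needed to reach $uwx_i$ is multiplicative, giving $w_J\mid_L uwx_i$. This establishes the single-variable induction step, and chaining it over the variables of $w$ yields an involutive divisor of $uw$ itself, proving $U$ is $L$-involutive. The main obstacle I expect is purely bookkeeping: one must verify that each constructed $w_{j+1}$ is genuinely a divisor of the original target $uwx_i$ (so that the next application of local involutivity stays relevant), and that the non-multiplicative variable triggering each step is the correct one needed to progress toward $uwx_i$; this is where properties (i), (iii) and (v) of Definition \ref{Dinv} must be invoked carefully to keep the sequence aligned with the divisibility chain demanded by continuity.
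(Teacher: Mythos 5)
Your overall architecture is the right one --- it is essentially the argument of Gerdt--Blinkov, which the paper itself does not reproduce (Proposition \ref{contin invol} is only cited from \cite{GB1}): build a chain of local-involutivity steps, invoke continuity to make its members pairwise distinct, and let finiteness of $U$ force termination. But your written justification contains a genuine error at the crucial step. You claim that $w_1\mid_L vx_i$ implies $w_1\mid_L uwx_i$ ``by transitivity (property (v))''. Property (v) reads $u\mid_L u_1$, $u_1\mid_L t \then u\mid_L t$ and requires the intermediate term $u_1$ to be an element of $U$; here the intermediate term would be $vx_i$, which in general does not lie in $U$, and indeed the conclusion is false: from $w_1\mid_L vx_i$ and $uwx_i=(vx_i)\cdot t$ with $t=uw/v\in L(v,U)$ one only gets \emph{ordinary} divisibility $w_1\mid uwx_i$, because $t$ need not consist of variables multiplicative for $w_1$. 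Note that if your claim were correct the proof would already be finished at $w_1$ and continuity would never be needed --- the same objection applies to your invariant ``each $w_j$ is an involutive divisor of the fixed term $uwx_i$'', which is exactly the property you are trying to obtain for some $w_J$, not something you can carry along the chain.

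The repair is standard and keeps your skeleton. The invariant to maintain is ordinary divisibility: $w_j\mid uwx_i$. If $w_j\nmid_L uwx_i$, then by the monoid characterization of $L(w_j,U)$ some variable $x_{i_j}$ dividing the cofactor $uwx_i/w_j$ lies in $NM_L(U,w_j)$; local involutivity yields $w_{j+1}\in U$ with $w_{j+1}\mid_L w_jx_{i_j}$, and then $w_{j+1}\mid w_jx_{i_j}\mid uwx_i$ by property (i), restoring the invariant. This chain satisfies the hypothesis of Definition \ref{Contin}, so continuity makes the $w_j$ pairwise distinct, finiteness of $U$ forces the process to stop, and it can only stop at some $w_J$ with $w_J\mid_L uwx_i$, which is the desired involutive divisor. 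Incidentally, once the argument is phrased this way your preliminary single-variable reduction is unnecessary: the same chain, started at $u_0=u$ with target $uw$, proves involutivity directly.
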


 \begin{example}[Janet division]\label{JanetDiv}
  Let $U\subset{\Cal T}\subset \ck[x_1,...,x_n]$ 
  be a finite set of terms, $x_1<...<x_n$ and
$\tau := x_1^{\alpha_1}\ldots x_n^{\alpha_n}\in U.$
For each
$j, 1 \leq j \leq n$, the variable
$x_j$ is said to be  {\em multiplicative}  for $\tau$ w.r.t. $U$
if there is no
$\tau' :=x_1^{\beta_1}\ldots x_j^{\beta_j}x_{j+1}^{\alpha_{j+1}}\ldots 
x_n^{\alpha_n} \in U$ for which
$\beta_j>\alpha_j$.
\\
For example, if $U=\{x^2,xy,z^3\}\subset \ck[x,y,z]$, $x<y<z$, and Janet division  $J$ is defined on U, then
$M_J(x^2,U)=\{x\}$, $NM_J(x^2,U)=\{y,z\}$,  
$M_J(xy,U)=\{x,y\}$, $NM_J(xy,U)=\{z\}$,  
$M_J(z^3,U)=\{x,y,z\}$, $NM_J(z^3,U)=\emptyset$. 

 \end{example}

 \begin{example}[Pommaret division]\label{PommDiv}
  Consider the polynomial ring $\ck[x_1,...,x_n]$ and suppose 
  the variable ordered as $x_1<...<x_n$.
  \\ Given a term $t=x_j^{\alpha_j}\cdots x_n^{\alpha_n}$ with $\alpha_j>0$
  Pommaret division considers as multiplicative  the variables 
  $x_i,\, i\leq j$ and non-multiplicative the variables $x_k,\, k>j$.
  \\ Notice that for a term $s=x_n^{\alpha_n}$, $\alpha_n>0$, all the 
  variables are multiplicative. 	\\
  For example, if $U=\{x^2,xy,z^3\}\subset \ck[x,y,z]$, $x<y<z$, and Pommaret division  $P$ 
  is defined on U, then
$M_P(x^2,U)=\{x\}$, $NM_P(x^2,U)=\{y,z\}$,  
$M_P(xy,U)=\{x\}$, $NM_P(xy,U)=\{y,z\}$,  
$M_P(z^3,U)=\{x,y,z\}$, $NM_P(z^3,U)=\emptyset$.
 \end{example}
 
 \begin{Proposition}[\cite{GB1}]\label{JPprop}
  Janet and Pommaret monomial divisions are involutive and continuous.
 \end{Proposition}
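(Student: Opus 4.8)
The plan is to verify, for each of the two divisions separately, the two axiom families: first the six conditions (i)–(vi) of Definition~\ref{Dinv} that make the relation $\mid_L$ an involutive division, and then the continuity condition of Definition~\ref{Contin}. Since both divisions are specified by the explicit multiplicative/non-multiplicative partitions given in Examples~\ref{JanetDiv} and~\ref{PommDiv}, I would work directly with the equivalent monoid formulation (Definition~\ref{InvDef}), checking conditions (a)–(d) on the submonoids $L(u,U)$, because there the combinatorics is cleaner. For both divisions, $L(u,U)$ is the free submonoid generated by the multiplicative variables, so condition (a) (divisor-closure) is immediate and condition (b) reduces to a statement about overlaps of cones.

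For \textbf{Pommaret division}, the partition depends only on $u$ (the multiplicative variables of $t=x_j^{\alpha_j}\cdots x_n^{\alpha_n}$ are exactly $x_i$ with $i\le j$, where $j=\min$-index of the support), not on the ambient set $U$; this makes conditions (d) and (vi) trivial, since the partition never changes when passing to a subset. The substantive work for Pommaret is condition (b)/(iv): if $u$ and $u_1$ have a common involutive multiple, I must show one involutively divides the other. The key observation is that $\min(u)$ determines the ``class'' $j$, and an involutive multiple of $u$ keeps the same minimal-variable index $j$ and only raises exponents of $x_i$ with $i\le j$; comparing the classes of $u$ and $u_1$ forces a nesting. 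Condition (c)/(v) (transitivity of multiplicative sets under involutive division) follows similarly by tracking how the class index can only move in one direction along an involutive division chain.

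For \textbf{Janet division}, the partition genuinely depends on $U$, so I would have to be more careful: condition (d)/(vi) requires that restricting to a subset $V\subseteq U$ can only shrink the multiplicative set, which follows because removing terms from $U$ can only remove obstructions $\tau'$ to the defining condition (``no $\tau'$ with $\beta_j>\alpha_j$''), hence can only turn non-multiplicative variables into multiplicative ones—i.e. $L(u,U)\subseteq L(u,V)$. The overlap condition (b) is the classical Janet-completeness property and is the most delicate axiom; I would prove it by a descending induction on the variable index $n,n-1,\dots,1$, comparing the $n$-degrees of $u$ and $u_1$ and using the definition of Janet multiplicativity at each level to force one term into the other's cone.

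The continuity claim is where I expect the \textbf{main obstacle} to lie, and I would handle it differently for the two divisions. For Pommaret, continuity is easy: following a non-multiplicative variable $x_{i_j}$ (with $i_j$ larger than the class index of $w_j$) and then taking the involutive divisor $w_{j+1}$ strictly increases a monovariant such as the class index or the $\max$-index, so no term can repeat in the sequence $w_1,\dots,w_J$. For Janet, continuity is the genuinely hard part: I would argue that along such a sequence some lexicographic-type invariant built from the exponent tuple (read from the highest variable down) strictly increases at each step, because moving through a non-multiplicative variable forces a strict increase in the relevant $h$-degree while the higher-indexed degrees are pinned by the Janet definition; strict monotonicity of this invariant rules out repetition and hence yields continuity. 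I would then invoke Proposition~\ref{contin invol} only implicitly, noting that continuity is exactly what makes local involutivity upgrade to full involutivity for these divisions.
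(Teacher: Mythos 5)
The paper itself contains no proof of this Proposition: it is imported verbatim from Gerdt--Blinkov \cite{GB1}, so there is no internal argument to compare against, and your attempt has to be judged as a from-scratch verification. Your overall architecture --- checking conditions (a)--(d) of Definition \ref{InvDef} and then continuity, separately for the two divisions --- is the classical route, and your Janet half is sound: divisor-closure of $L(u,U)$ is immediate, $L(u,U)\subseteq L(u,V)$ for $V\subseteq U$ holds because shrinking $U$ only removes obstructions, overlap of Janet cones is excluded by comparing exponents from $x_n$ downward, and Janet continuity does follow from the standard (and correct) lemma that an involutive divisor $v\in U$ of $ux_i$, with $x_i$ non-multiplicative for $u$, satisfies $v>_{\mathrm{lex}}u$ when exponents are read from the highest variable down.

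The genuine gap is in your Pommaret continuity argument. Write $\mathrm{cls}(t)$ for the index $j$ with $x_j=\min(t)$. Neither monovariant you name is strictly increasing along a sequence as in Definition \ref{Contin}. Counterexample: $U=\{x_1x_3,\;x_1x_2x_3\}$, $w_1=x_1x_3$, $x_{i_1}=x_2$, which is non-multiplicative for $w_1$ since $2>\mathrm{cls}(w_1)=1$, and $w_2=x_1x_2x_3$; then $w_2\mid_P w_1x_2$ trivially, yet the class index stays $1$ and the max-index stays $3$, so neither quantity increases and your argument collapses for Pommaret. What does work is a combined monovariant: (1) since $w_{j+1}$ divides $w_jx_{i_j}$, its support is contained in that of $w_jx_{i_j}$, hence $\mathrm{cls}(w_{j+1})\ge\mathrm{cls}(w_jx_{i_j})=\mathrm{cls}(w_j)$; and (2) if $\mathrm{cls}(w_{j+1})=\mathrm{cls}(w_j)=c$, then writing $w_jx_{i_j}=w_{j+1}v$ with $v\in L_P(w_{j+1},U)$ forces $v$ to involve only variables $x_l$ with $l\le c$, so $\deg_l(w_{j+1})=\deg_l(w_j)+\delta_{l,i_j}$ for every $l>c$, and $\sum_{l>c}\deg_l$ increases by exactly one because $i_j>c$. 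Hence the pair $\bigl(\mathrm{cls}(w),\sum_{l>\mathrm{cls}(w)}\deg_l(w)\bigr)$ strictly increases in the lexicographic order on $\mathbb{N}^2$, which forbids repetitions. A related slip: your claim that an involutive multiple of $u$ ``keeps the same minimal-variable index'' is false (multiplying by a multiplicative $x_i$ with $i<\mathrm{cls}(u)$ lowers it); the nesting argument for axiom (b)/(iv) needs only the true half of your sentence --- that the exponents of all variables above the class are frozen --- and with that correction it goes through.
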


 \section{Relative involutive divisions}\label{DefRID}
We define now a variation of Gerdt-Blinkov involutive division
(Definition \ref{Dinv}), calling it \emph{relative involutive division} since it
is defined specifically for a fixed set of terms $U \subset \kT$. In the next section, restricting
 to the case $U=\kT_D\subset \ck[x_1,...,x_n]$, 
 $D \in \NN$, i.e. to the set of all terms of degree $D$ in $n$ 
 variables, we will see the criteria for constructing relative involutive divisions on it.\\
Let us start from the definition of relative involutive division.
\begin{Definition}\label{RelInvDiv}
Let $U \subset \kT$ be a finite set of terms.  We say that a \emph{relative involutive division} $L$ is given on $U$ if, for each $u \in U$ a partition 
 $$\{x_1,...,x_n\} =M_L(u,U) \sqcup NM_L(u,U),$$
is given on the set of variables s.t. denoted 
$$L(u,U):=\{x_1^{a_1}\cdots x_n^{a_n} \, \vert \, a_i \neq 0 \Rightarrow x_i \in M_L(u,U)\}, $$
the following two conditions hold:
\begin{enumerate}
\item $\cT(U)=\bigcup_{u \in U} uL(u,U)$;
\item $\forall u,v \in U$, $uL(u,U) \cap vL(v,U) = \emptyset$.
\end{enumerate}
The set $M_L(u,U)$ is called \emph{(relative) multiplicative variable's set},  $NM_L(u,U)$ is called \emph{(relative) non-multiplicative variable's set}, whereas $L(u,U)$  is the set of \emph{(relative) multiplicative terms}.
Denoting by $C_L(u,U):=uL(u,U)$ the \emph{(relative) cone} of $u \in U$, conditions $1-2$ above may be also rewritten as:
\begin{enumerate}
\item[1'.] $\cT(U)=\bigcup_{u \in U} C_L(u,U)$;
\item[2'.] $\forall u,v \in U$, $C_L(u,U) \cap C_L(v,U) = \emptyset$.
\end{enumerate}
\end{Definition}
We will write $u\vert_L w$ if $w=uv$ and $v \in L(u,U)$ (so that $w \in C_L(u,U)$) and we will say that $u$ is a \emph{(relative) involutive divisor} of $w$ and that $w$ is a   \emph{(relative) 
involutive multiple} of $u$.
\\
\smallskip

In order to give a clear comparison between Gerdt-Blinkov involutive division and our relative involutive divisions, we prove the following Lemma.
\begin{Lemma}\label{RelWGerdt}
A relative involutive division $L$ on a finite set  $U \subset \kT$ satisfies conditions $(i)-(iii)$ of Definition \ref{Dinv}.
\end{Lemma}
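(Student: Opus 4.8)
The plan is to verify conditions (i), (ii), (iii) of Definition \ref{Dinv} directly from the data of a relative involutive division, namely the partition $\{x_1,\ldots,x_n\}=M_L(u,U)\sqcup NM_L(u,U)$ together with the induced submonoid $L(u,U)$ and the two global axioms (disjointness of cones and coverage of $\cT(U)$). The key observation is that $u\mid_L w$ by definition means $w=uv$ with $v\in L(u,U)$, so each condition reduces to an elementary statement about the submonoid $L(u,U)$, which by construction consists precisely of those terms whose support lies in $M_L(u,U)$.

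For condition (i), I would simply note that if $u\mid_L w$ then $w=uv$ for some $v\in L(u,U)\subset\cT$, so $u\mid w$ in the ordinary divisibility sense; this is immediate. For condition (ii), $u\mid_L u$ amounts to $u=u\cdot 1$ with $1\in L(u,U)$, and $1\in L(u,U)$ holds because $L(u,U)$ is a submonoid (equivalently, $1$ has empty support, so it trivially satisfies the support condition defining $L(u,U)$). For condition (iii), I would unwind both directions. The term $w=x_1^{a_1}\cdots x_n^{a_n}$ lies in $L(u,U)$ exactly when every variable $x_i$ appearing in $w$ (i.e. with $a_i\neq 0$) belongs to $M_L(u,U)$. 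Writing $t=x_1^{b_1}\cdots x_n^{b_n}$ and $t_1=x_1^{c_1}\cdots x_n^{c_n}$, the statement $u\mid_L ut$ and $u\mid_L ut_1$ says $t,t_1\in L(u,U)$, i.e. the supports of $t$ and $t_1$ are contained in $M_L(u,U)$; and $u\mid_L utt_1$ says $tt_1\in L(u,U)$, i.e. the support of $tt_1$ is contained in $M_L(u,U)$. Since $\supp(tt_1)=\supp(t)\cup\supp(t_1)$, these are equivalent, which gives (iii) in both directions.

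I do not expect any serious obstacle here, since conditions (i)--(iii) of Definition \ref{Dinv} are exactly the \emph{local}, single-element properties, and these are built into the very definition of $L(u,U)$ as the set of terms supported on $M_L(u,U)$; in fact the characterization stated in the paragraph following Definition \ref{Dinv} already records that $L(u,U)$ is determined by the partition via the support condition. The only mild care needed is to state clearly that $L(u,U)$ is a submonoid (closed under multiplication and containing $1$), which is immediate from the support description, and to phrase the $\iff$ of (iii) as the set-theoretic identity $\supp(tt_1)=\supp(t)\cup\supp(t_1)$ combined with the fact that membership in $L(u,U)$ is governed purely by containment of supports in $M_L(u,U)$. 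Notably, conditions (i)--(iii) make no use of the two global axioms of a relative involutive division (coverage and disjointness of cones); those axioms are the ones that would be needed for the more subtle conditions (iv)--(vi), which the present Lemma deliberately does not claim.
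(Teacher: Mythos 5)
Your proposal is correct and follows essentially the same route as the paper: condition (i) by definition, condition (ii) from $1\in L(u,U)$, and condition (iii) by checking variable supports against $M_L(u,U)$ in both directions. The only cosmetic difference is that you phrase the argument for (iii) via the identity $\supp(tt_1)=\supp(t)\cup\supp(t_1)$, whereas the paper writes the same fact out in exponents ($a_i+b_i\neq 0$ iff $a_i\neq 0$ or $b_i\neq 0$).
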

\begin{proof}
Condition (i) is true by definition, whereas condition (ii) comes trivially observing that $\forall u \in U$, $1 \in L(u,U)$.\\
We prove now (iii) Suppose that $u \vert_L uv$ and  $u \vert_L uw$; $v=x_1^{a_1}\cdots x_n^{a_n}$,  $w=x_1^{b_1}\cdots x_n^{b_n}$, where  each variable appearing in $v$ and $w$ with nonzero exponent 
is 
multiplicative for $u$.
Take now $vw=x_1^{a_1+b_1}\cdots x_n^{a_n+b_n}$; if for some $1 \leq i \leq n$ it holds $a_i+b_i \neq 0$ 
then either $a_i$ or $b_i$ (or both) are different from zero. This implies $x_i \in M_L(u,U)$ 
and so $u \vert_L uvw$.\\
Viceversa, suppose $u \vert_L uvw$; $vw=x_1^{a_1}\cdots x_n^{a_n}$, with, for $1 \leq i \leq n$, $a_i \neq 0 \Rightarrow x_i \in M_L(u,U)$, $v=x_1^{b_1}\cdots x_n^{b_n}$, $w=x_1^{c_1}\cdots 
x_n^{c_n}$, $b_i+c_i=a_i$ for $i=1,...,n$.\\
If, for some $1 \leq i \leq n$, $b_i \neq 0$, then $a_i \neq 0$, so $x_i \in M_L(u,U)$ and the same holds for the exponents of the variables appearing in $w$, so $v,w \in L(u,U)$ and 3. is proved.
\end{proof}
As regards conditions (iv)-(v), we see that they trivially hold since their hypothesis can never happen, 
because we have imposed the relative cones to be disjoint. 
Moreover, condition (vi) does not make sense in our context, due to the 
relativity of our involutive division. Indeed, in \cite{GB1,GB2}, Gerdt and Blinkov define involutive divisions on $\kT$
as ``rules'' to be applied to any $U \subset \kT$, whereas relative involutive divisions
only involve a specific $U \subset \kT$.

\begin{Remark}\label{invol_autored}
Given a relative involutive division $L$ on a finite set $U \subset \kT$, we can notice
that $U$ turns out to be involutively autoreduced  and $L$-involutive according to Gerdt-Blinkov definitions \cite{GB1,GB2}; this trivially follows from conditions 1-2 of Definition \ref{RelInvDiv}.
\end{Remark}

\begin{Remark}\label{NonCont}
We remark that the relative involutive divisions we are defining are not 
continuous in the sense defined by
 Gerdt-Blinkov in \cite{GB1} (see Proposition \ref{contin invol}). In our set $U$, in fact, local involutivity does not imply involutivity,
 i.e. it is not 
 true in general that 
 
  (a) $\forall u \in U$, $\forall x_j\in NM(u,U)$, $1\leq j\leq n$, $\exists v \in U$ s.t. 
     $u x_j \in vL(v,U)$

     implies that 
     
     (b) $\cT(U)=\bigcup_{u \in U} uL(u,U)$.
  \\   
 Take for example  (see also \cite{GB1})   $D=1$, $n=3$, so $U:=\mathcal{T}_1=\{x,y,z\}$ and suppose that
$\mult(x,U)=\{x,y\}$, $\mult(y,U)=\{y,z\}$, $\mult(z,U)=\{x,z\}$.
\\
This way,   $xz \in zL(z,U)$, $xy \in xL(x,U)$ and  $y z \in yL(y,U)$, so (a)
is trivially satisfied.
\\
We notice that (b) does not hold, since  
$xyz \notin xL(x,U)\cup yL(y,U)\cup zL(z,U)$, since $z\notin M(x,U)$, 
$x\notin M(y,U)$ and   $y\notin M(z,U)$. 
\\
This example also shows that the \emph{completion} procedure by Janet \emph{does not work} for a 
general relative involutive division. Indeed, the set $U$ is complete according to
 Janet definition, but 
$\cT(U)\neq \bigcup_{u \in U} uL(u,U)$.
\\
Moreover, we notice that if $\forall u,v \in U$ $\exists w \in U$ s.t. $\mcm(u,v) \in wL(w,U)$, this in general
does not imply that $\cT(U)=\bigcup_{u \in U} uL(u,U)$.
\end{Remark}

\begin{example}[Janet relative involutive division]\label{JanetInvol}
Consider a set $U\subseteq \kT \subset \ck[x_1,...,x_n]$  and define on it Janet involutive division $J$,
as in example \ref{JanetDiv}, with $x_1<...<x_n$. We know (see \cite{GB1}) that condition 2. of 
Definition \ref{RelInvDiv}
 is always satisfied, whereas condition 1. is satisfied only if $U$ is involutive
  or, in Janet's language \cite{J1}, \emph{complete}. 
\end{example}

\begin{example}[Pommaret relative involutive division]\label{PomInvol}
Consider a set $U\subseteq \kT \subset \ck[x_1,...,x_n]$  and define on it Pommaret involutive division $P$,
as in example \ref{PommDiv}. Neither condition 1. nor condition 2. are authomatically
satisfied for an arbitrarily chosen set $U\subset \kT$, as shown in the following examples:
\begin{itemize}
 \item[1.] if $U=\{x_1,x_2\}\subset \ck[x_1,x_2,x_3]$, $x_1<x_2<x_3$, then $M_P(x_1,U)=\{x_1\}$,
 $NM_P(x_1,U)=\{x_2,x_3\}$, 
 $M_P(x_2,U)=\{x_1,x_2\}$, $NM_P(x_2,U)=\{x_3\}$, so $x_1x_3 \in \cT(U)$ but it does not belong to
 $x_1L(x_1,U)\cup x_2 L(x_2,U)$.
 \item[2.] $U=\{x_1,x_1^2\}\subset \ck[x_1,x_2]$,  $x_1<x_2$, then $M_P(x_1,U)=\{x_1\}$,
 $NM_P(x_1,U)=\{x_2\}$, 
 $M_P(x_1^2,U)=\{x_1\}$, $NM_P(x_1^2,U)=\{x_2\}$, so $x_1^2 \in x_1L(x_1,U)\cap x_1^2L(x_1,U)$.
\end{itemize}
\end{example}

\section{Constructing a relative involutive division}\label{Disj}

%
In this section, we specialize to the case $U=\kT_D$ and we give precise criteria to assign relative involutive divisions on $U$ in all possible ways.\\
First of all, we remark that, in order to satisfy both conditions of Definition \ref{RelInvDiv},   
$\forall d \in {\mathbb{N}}$, $\vert {\mathcal{T}}_{D+d} \vert = {D+d+n-1 \choose n-1}  $ must equal the sum of the terms 
generated by each element of 
$U:=\mathcal{T}_D$.  Given a term $u \in U$, s.t. $\vert M_L(u,U) \vert = k$ the number of terms 
in degree $d+D$, generated by $u$, multiplying it only by its multiplicative variables is 
$m_{d,k}:={d+k-1\choose  k-1}$, so we need preliminarly to partition $U$ into $n$ sets $U_i$, $1 \leq k \leq n$, 
each consisting of $a_k := \vert U_k\vert$,  elements
so that 
\begin{equation}\label{Hil}
\sum_{k=1}^{n} a_k m_{d,k} = {D+d+n-1 \choose n-1}, \Forall d\geq 0
\end{equation}

and then decide 
 which $k$ multiplicative variables associate to each term $u\in U_k$ 
belonging to each subset $U_k$, in such a way that the related cones do not intersect while 
covering the whole ideal $\mathcal{T}_{\geq D}$.

\begin{History} The notation $\sum_{i=0}^{d} \chi_i {R \choose i}$ was used by Hilbert (with the sum running from $i=0$ to $i=d$) 
\cite[Th.IV,pg.512]{Hilbert} 
as a notation for his \emph{characteristiche Function}; Janet connects it with (\ref{Hil})
setting 
$$a_k := \sigma_k := \#\{\tau\in \mathcal{T}_D : \min(\tau)=k\} = {D+n-1-k \choose n-k}$$
 while describing his decomposition on terms of Cartan's results\footnote{Janet is actually defining Pommaret division.}.

It is then sufficient to note the each term $\tau= x_1^{\gamma_1}\cdots
x_n^{\gamma_n}, \;\sum_i \gamma_i \geq D$ can be uniquely decomposed as
$$\tau= \upsilon\omega, \upsilon = x_1^{\gamma_1}\cdots
x_j^{\gamma_j-\bar\gamma_j}, \omega = x_j^{\bar\gamma_j}\cdots
x_n^{\gamma_n}\in U,\; \bar\gamma_j+\sum_{i=j+1}^n \gamma_i = D$$
to elementary deduce the relation 
\begin{equation}\label{Binom}
 {D+d+n-1 \choose n-1}=\sum_{k=1}^{n}{D+n-1-k \choose n-k}{d+k-1\choose  k-1},
\end{equation}
but this would be an historical cheating since the formula was well-known in the circle of Hilbert followers (see 
for instance \cite[p.184]{Gun3}).
\end{History}

In the next proposition we give a direct 
proof of equation (\ref{Binom}).

\begin{Proposition}\label{Bin1}
With the above notation, for $a_k={D+n-1-k \choose n-k}$, $1 \leq k \leq n$, we get a partition of $\vert \kT_{D+d} \vert$. More precisely
(\ref{Binom}) holds.
\end{Proposition}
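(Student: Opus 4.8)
The plan is to prove identity (\ref{Binom}) by recognizing its right-hand side as a Cauchy product of two ``negative binomial'' power series, so that the whole statement collapses to the elementary fact $\frac{1}{(1-x)^{D}}\cdot\frac{1}{(1-x)^{d+1}}=\frac{1}{(1-x)^{D+d+1}}$. First I would rewrite each factor as a coefficient of a power of $(1-x)^{-1}$, using $\binom{a-1+i}{i}=[x^{i}]\,(1-x)^{-a}$. Setting $i:=k-1$ turns the factor $\binom{d+k-1}{k-1}$ into $[x^{i}]\,(1-x)^{-(d+1)}$, while setting $j:=n-k$ turns $\binom{D+n-1-k}{n-k}$ into $[x^{j}]\,(1-x)^{-D}$. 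The decisive book-keeping observation is that $i+j=(k-1)+(n-k)=n-1$ is constant, so as $k$ runs over $1,\dots,n$ the pair $(i,j)$ runs over \emph{all} pairs of nonnegative integers with $i+j=n-1$.

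With this in hand, the sum on the right of (\ref{Binom}) is precisely the coefficient of $x^{n-1}$ in the product $(1-x)^{-D}(1-x)^{-(d+1)}=(1-x)^{-(D+d+1)}$; reading off that coefficient via $[x^{m}]\,(1-x)^{-a}=\binom{a-1+m}{m}$ with $a=D+d+1$ and $m=n-1$ yields $\binom{D+d+n-1}{n-1}$, which is the left-hand side. This is exactly the convolution alluded to in the Historical Remark, but presented as a self-contained algebraic manipulation rather than deduced from the Pommaret cone decomposition.

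As an alternative elementary route that avoids generating functions, I would run a double induction on $n$ and $d$ driven by Pascal's rule. The base $n=1$ is immediate (both sides equal $1$), and the base $d=0$ reduces, after the substitution $j=n-k$, to the hockey-stick identity $\sum_{j=0}^{n-1}\binom{D-1+j}{j}=\binom{D+n-1}{n-1}$. For the inductive step one forms the difference $f_n(d)-f_n(d-1)$ of the right-hand side, applies Pascal in the form $\binom{d+k-1}{k-1}-\binom{d+k-2}{k-1}=\binom{d+k-2}{k-2}$ (the $k=1$ term vanishing), and after reindexing $k\mapsto k-1$ recognizes the result as the instance of (\ref{Binom}) in $n-1$ variables; combining this with the inductive value of $f_n(d-1)$ and one further application of Pascal closes the argument.

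I expect the only genuine obstacle to be the index book-keeping: the substitutions $i=k-1$, $j=n-k$ and the verification that $(i,j)$ exhausts exactly the pairs with $i+j=n-1$ must be handled carefully so that the sum really is a full Cauchy product with no missing or spurious terms (equivalently, in the inductive proof, one must check that the boundary term $k=1$ drops out and that the reindexed sum matches the $(n-1)$-variable case verbatim). Once the reindexing is pinned down, the binomial algebra itself is routine.
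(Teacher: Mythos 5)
Your proposal is correct, and it is worth separating its two routes. Your primary generating-function argument is, at bottom, the same proof as the paper's: the paper applies the upper-negation formula $\binom{l}{k}=(-1)^k\binom{k-l-1}{k}$ to both factors and then invokes the generalized Vandermonde convolution $\sum_k\binom{r}{k}\binom{s}{m-k}=\binom{r+s}{m}$ with $r=-D$, $s=-d-1$, and that convolution with negative upper indices is exactly the coefficient identity $[x^{n-1}]\,(1-x)^{-D}(1-x)^{-(d+1)}=[x^{n-1}]\,(1-x)^{-(D+d+1)}$ you exploit; the two differ only in packaging (formal power series versus binomial-coefficient manipulation), and your index bookkeeping ($i=k-1$, $j=n-k$, so $(i,j)$ runs over all pairs with $i+j=n-1$) is the correct and complete check. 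Your second route, the double induction on $n$ and $d$ via Pascal's rule, is genuinely different from the paper: the step $f_n(d)-f_n(d-1)=\sum_{k\geq 2}\binom{D+n-1-k}{n-k}\binom{d+k-2}{k-2}$ does lose its $k=1$ term (as $\binom{d-1}{-1}=0$), reindexes to the $(n-1)$-variable instance of (\ref{Binom}), and then Pascal's rule $\binom{D+d+n-2}{n-1}+\binom{D+d+n-2}{n-2}=\binom{D+d+n-1}{n-1}$ closes the induction, so this argument is sound. What the paper's (and your first) route buys is brevity at the cost of conventions for binomial coefficients with negative upper argument; what your induction buys is a fully elementary, self-contained proof using nothing beyond Pascal and the hockey-stick identity. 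Note also that the paper's Historical Remark sketches a third, bijective route (the unique factorization $\tau=\upsilon\omega$ coming from the Pommaret cones), which you deliberately avoid; your proofs are purely algebraic, which is precisely what the Proposition, stated independently of any division, calls for.
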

\begin{proof}
We set $N=n-1$ and reformulate (\ref{Binom}) as 
$ {D+d+N \choose N}=\sum_{k=0}^{N}{D+N-1-k \choose N-k}{d+k\choose  k}$.

Applying twice upper negation formula ${l \choose k} = (-1)^k {k-l-1 \choose k}$ and the
(generalized) Vandermonde's convolution $\sum_k {r \choose k} {s \choose n-k} = {r+s \choose n}$
to the summation on the right term of the above formula we get 
\begin{eqnarray*}
{N+d+D\choose N}
      &=&
    (-1)^N{-d-D-1 \choose N}
    \\ &=& 
    (-1)^{N}\sum_{k=0}^{N}   {-D \choose N-k}{-d-1\choose  k}
    \\ &=&
    \sum_{k=0}^{N}  (-1)^{N-k}  {-D \choose N-k} (-1)^k{-d-1\choose  k}
     \\ &=&
    \sum_{k=0}^{N}{D+N-1-k \choose N-k}{d+k\choose  k}, 
    \end{eqnarray*}
    proving the assertion.
 \end{proof}

Now we have to prove that $a_k={D+n-1-k \choose n-k}$,$1 \leq k \leq n$, is the unique choice leading to a partition of the form defined above

%
%

\begin{Proposition}\label{UniqBinom}
 The decomposition of equation (\ref{Binom}) is unique. 
\end{Proposition}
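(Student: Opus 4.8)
The plan is to exploit the fact that, read as functions of the variable $d$, the quantities $m_{d,k}={d+k-1\choose k-1}$ form a \emph{basis} of a polynomial space, so that the expansion of the left-hand side of (\ref{Binom}) in terms of them is forced. Concretely, for each fixed $k$ with $1\leq k\leq n$ one has
$$m_{d,k}={d+k-1 \choose k-1}=\frac{(d+1)(d+2)\cdots(d+k-1)}{(k-1)!},$$
which is a polynomial in $d$ of degree exactly $k-1$, with leading coefficient $1/(k-1)!$ (for $k=1$ it is the constant $1$). First I would record this observation, together with the fact that ${D+d+n-1\choose n-1}$ is likewise a polynomial in $d$, of degree $n-1$.

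The key step is then linear independence. Since the $n$ polynomials $m_{d,1},\ldots,m_{d,n}$ have pairwise distinct degrees $0,1,\ldots,n-1$, they are linearly independent over $\QQ$ and hence span the $n$-dimensional space $\QQ[d]_{\leq n-1}$ of polynomials in $d$ of degree at most $n-1$. The target ${D+d+n-1\choose n-1}$ lies in this space, so it admits one and only one expression of the form $\sum_{k=1}^n a_k\, m_{d,k}$ with constant coefficients $a_k$. This is where uniqueness comes from: any two tuples $(a_1,\ldots,a_n)$ satisfying (\ref{Hil}) for all $d\geq 0$ would give two expansions of the same polynomial of degree at most $n-1$ in the same basis, hence must coincide coefficientwise. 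To close the argument I would invoke Proposition \ref{Bin1}, which already exhibits the tuple $a_k={D+n-1-k\choose n-k}$ as a valid expansion; by the uniqueness just established, this is the only admissible choice, which is exactly the assertion.

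One technical point worth addressing is the passage from ``holds for every integer $d\geq 0$'' to ``holds as a polynomial identity'': two polynomials of degree at most $n-1$ agreeing at infinitely many integers coincide, so the infinitely many scalar equations in (\ref{Hil}) are equivalent to a single identity in $\QQ[d]_{\leq n-1}$. Alternatively one may simply evaluate at the $n$ points $d=0,1,\ldots,n-1$ and note that the resulting coefficient matrix $\bigl[{d+k-1\choose k-1}\bigr]_{d=0,\ldots,n-1;\,k=1,\ldots,n}$ is invertible, its columns being the independent sampled basis polynomials. I expect this bookkeeping, rather than any genuine difficulty, to be the only place requiring a little care; the substance of the proof is the degree count that makes $\{m_{d,k}\}_{k=1}^n$ a basis, after which uniqueness is immediate.
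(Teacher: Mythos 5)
Your proof is correct and takes essentially the same route as the paper: there, too, the binomial coefficients $\binom{d+k-1}{k-1}$ are regarded as polynomials $f_{k-1}(d)$ of pairwise distinct degrees in $d$, and uniqueness of the coefficients is deduced from their linear independence, phrased as a contradiction via the maximal index $k_0$ with $a_{k_0}\neq b_{k_0}$ whose leading term $d^{k_0-1}$ cannot cancel. Your packaging of this as a basis of $\QQ[d]_{\leq n-1}$, together with the explicit remark that agreement at all integers $d\geq 0$ yields a polynomial identity, is just a cleaner bookkeeping of the same argument.
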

\begin{proof} 
For each $l \in \NN$, we first define the polynomial $f_l(x):={x+l \choose l}$, 
which is trivially monic of degree $l$ in $x$.\\
It is clear that ${D+d+n-1 \choose n-1}=f_{n-1}(d+D)$
and 
that ${d+k-1\choose  k-1}=f_{k-1}(d)$, so we may rewrite (\ref{Binom}) as 
$$f_{n-1}(d+D)= \sum_{k=1}^{n}{D+n-1-k \choose n-k} f_{k-1}(d)$$
$$f_{n-1}(d+D)-f_{n-1}(d)= \sum_{k=1}^{n-1}{D+n-1-k \choose n-k} f_{k-1}(d).$$
Since $d$ does not appear in the binomial coefficients ${D+n-1-k \choose n-k}$, we may
 see them as integer coefficients, so we may write 
$$f_{n-1}(d+D)-f_{n-1}(d)= \sum_{k=1}^{n-1}a_k f_{k-1}(d),$$
 where $a_k={D+n-1-k \choose n-k},\, k=1,...,n-1$.\\
 Suppose now there is another decomposition, so that 
 $$f_{n-1}(d+D)-f_{n-1}(d)= \sum_{k=1}^{n-1}b_k f_{k-1}(d),$$
for some $b_k \in \NN, \, k=1,...,n-1$, so that 
\begin{equation}\label{UB}
 \sum_{k=1}^{n-1}(a_k-b_k) f_{k-1}(d)=0
\end{equation}
and consider the maximal $k_0$ s.t.
$a_{k_0}\neq b_{k_0}$. We have $\cT (f_{k-1}(d))=d^{k-1}$, which appears 
only once in (\ref{UB}), with coefficient $(a_{k_0}-b_{k_0})\neq 0$, so 
$\sum_{k=1}^{n-1}(a_k-b_k) f_{k-1}(d)$ cannot be zero, leading to a contradiction.
\end{proof}

The propositions above show that if a relative involutive division for $\kT_D$ exists, 
multiplicative variables must be assigned to terms in a way making the 
condition (\ref{Binom}) satisfied.
\\
Anyway, we still have not proved that such a decomposition can be achieved, 
for each $D\in \NN$. 
Actually, the answer is positive, as we can see in the following example

\begin{example}[Pommaret division -- relative case for $T=\kT_D$.]
Consider $U=\kT_D$, $D \in \NN$ and define Pommaret division on $U$, with $x_1<x_2<...<x_n$.
\\
Clearly, $\forall t \in U$ $t \in tL(t,U)$ and $\forall s \in U$, $s \neq t$, $t \notin sL(s,U)$.
\\
Take now $t_1=x_1^{\alpha_1} \cdots x_n^{\alpha_n}\in \kT_{>D}$ i.e. a term in $\kT$ of 
degree strictly greater than $D$.
Let 
$$s_1:=x_i^{\beta_i}x_{i+1}^{\alpha_{i+1}} \cdots x_n^{\alpha_n}, 1\leq i\leq n, \, \beta_i\leq \alpha_i, \,
D=\beta_i+\sum_{j=i+1}^n \alpha_j.$$

Clearly $s_1\in U$ is the only element of $U$ which Pommaret-divides $t_1$;
thus $\forall t_1 \in \kT_{\geq D}$,
$\exists ! s_1 \in U$ s.t. $t_1 \in s_1L(s_1,U)$, i.e.   for $U=\kT_D$, Pommaret division
is a relative involutive division.
\\
The terms in $U$ with minimal variable $x_k$  $1\leq k \leq n$ are exactly the terms of degree $D$ 
in $x_k,...,x_n$ which contain $x_k$, so their number is the difference between the terms of 
degree $D$ 
in $x_k,...,x_n$ and those in  $x_{k+1},...,x_n$ of the same degree:
$${D+(n-k+1)-1 \choose (n-k+1)-1}- {D+(n-k+1)-2 \choose (n-k+1)-2}=$$
$${D+(n-k+1)-2 \choose (n-k+1)-2}+{D+(n-k+1)-2 \choose (n-k+1)-1} - {D+(n-k+1)-2 \choose (n-k+1)-2}= $$
$$ {D+(n-k+1)-2 \choose (n-k+1)-1}={D+n-k-1 \choose n-k}.$$
These terms have exactly $k$ multiplicative variables, so decomposition of 
equation (\ref{Binom}) holds for Pommaret division.
\\
In this particular case, i.e. for $U=\kT_D$, $D \in \NN$, we can also notice 
that Pommaret division and Janet division coincide as relative involutive divisions, with the same variable ordering.
\\
Indeed, let $t=x_i^{\alpha_i}\cdots x_n^{\alpha_n} \in U $ and suppose $\min(t)=x_i$, so $\alpha_i>0$, for some $1\leq i \leq n $.
\\
If $x_j>x_i$ then $\frac{tx_j}{x_i}\in U$, so by definition of Janet division, $x_j$ cannot 
be multiplicative for $t$.\\

On the other hands, if $x_j \leq x_i$, for $x_j$ not being multiplicative, we should find 
in $U$ a term $x_1^{\beta_1}  \cdots x_n^{\beta_n}$ with
\begin{itemize}
\item $\beta_l = \alpha_{l}, l\geq i,$ and $\beta_j> {\alpha_{j}}=0$ if $x_j<x_i$,
\item $\beta_l = \alpha_{l}, l> i,$ and $ \beta_j\geq{\alpha_{j}}+1$ if $x_j=x_i$.
\end{itemize}

Due to degree reasons, this is clearly impossible for a term in $U$, so each variable 
$x_j\leq x_i$ must be multiplicative for $t$.\\
\medskip 

As an example, consider the terms in $n=3$ variables and degree $D=2$, supposing $x<y<z$; Pommaret division
is defined 
\begin{center}
\begin{tabular}{|c|c|}
\hline
Terms & Multiplicative Variables\\
\hline
$x^2$& $x$\\
\hline
$xy$& $x$\\
\hline
$y^2$ &$x,y$\\
\hline
$xz$ & $x$\\
\hline
$yz$ & $x,y$\\
\hline
$z^2$ & $x,y,z$\\
\hline
\end{tabular}
\end{center}
 We have exactly ${3 \choose 2} = 3$ terms with only one multiplicative variable (namely $x^2,xy,xz $),
  ${2 \choose 1} = 2$ terms with two multiplicative variables (namely $y^2,yz $) and 
  ${1\choose 0}=1$ term with thre multiplicative variables (namely $z^2$).
 \\
 If we define Janet division on the same set we get exactly the same partition into multiplicative
  and non-multiplicative variables as for Pommaret division. Indeed:
  \begin{itemize}
   \item $x^2$: $x$ is multiplicative for $x^2$; since $xy \in U$ $y$ is not 
   multiplicative and the same goes for $z$, being $xz \in U$.
  
   \item $xy$: $x$ is multiplicative for $xy$; since $y^2 \in U$ $y$ is not 
   multiplicative and the same goes for $z$, being $xz \in U$.
  
  \item $y^2$: $x,y$ are multiplicative for $y^2$; since $xz \in U$ $z$ is not 
   multiplicative.
  
    \item $xz$: $x$ is multiplicative for $xz$; since $yz \in U$ $y$ is not 
   multiplicative and the same goes for $z$, being $z^2 \in U$.

  \item $yz$: $x,y$ are multiplicative for $yz$; since $z^2 \in U$ $z$ is not 
   multiplicative.
   
   \item $z^2$: $x,y,z$ are all multiplicative for $z^2$.
  
  \end{itemize}

 We finally point out that, in this example, the multiplicative sets are all contained one in another:
 $$\{x\} \subset \{x,y\} \subseteq \{x,y,z\}.$$
We will come back to this fact in section  \ref{PRID}, while focusing on 
the main properties of Pommaret relative involutive division.

\end{example}

%

In order to give all the possible decompositions of $\kT_D$ in cones, we have now to study the criteria for choosing 
multiplicative variables to assign to each term in the set $U=\kT_D$, so that conditions 1., 2. of Definition \ref{RelInvDiv} are satisfied.

First of all, we prove a simple property of pure powers.

\begin{Lemma}\label{PotPura}
 Let $U:=\kT_D$, $D \in \NN$ and suppose that $\forall u \in U$ a partition of the variables 
 into multiplicative and non-multiplicative ones
 $\{x_1,...,x_n\} =M(u,U) \sqcup NM(u,U)$
 is given s.t., with the above notation, $\cT(U)=\kT_{\geq D}=\bigcup_{u \in U} uL(u,U)$. Then
 $\forall 1\leq i\leq n $, $x_i \in M(x_i^D,U).$
\end{Lemma}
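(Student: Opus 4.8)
The plan is to give a short direct proof that exploits the covering (surjectivity) hypothesis on a single well-chosen term of degree $D+1$. Fix $i$ with $1 \leq i \leq n$ and consider the pure power $x_i^{D+1}$. Since $\deg(x_i^{D+1}) = D+1 > D$, this term lies in $\kT_{\geq D} = \cT(U)$, so by the hypothesis $\cT(U) = \bigcup_{u \in U} uL(u,U)$ there exists some $v \in U$ with $x_i^{D+1} \in vL(v,U)$. The whole argument then reduces to identifying this vertex $v$.

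The step requiring the most care is pinning down $v$, and it is the crux of the proof. By condition (i) of Definition \ref{Dinv}, which holds for relative involutive divisions by Lemma \ref{RelWGerdt}, involutive divisibility implies ordinary divisibility, so $v \mid x_i^{D+1}$. The divisors of $x_i^{D+1}$ are exactly the pure powers $x_i^k$ with $0 \leq k \leq D+1$; among these, the only one of degree $D$ is $x_i^D$, and $v$ does have degree $D$ because $v \in U = \kT_D$. Hence $v = x_i^D$ is forced: a degree-$(D+1)$ pure power admits a unique divisor of degree $D$, so the covering condition leaves no freedom in the choice of the involutive vertex.

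It then follows immediately that $x_i^{D+1} = x_i^D \cdot x_i \in x_i^D L(x_i^D,U)$, which means $x_i \in L(x_i^D,U)$; by the definition of the multiplicative set $L(x_i^D,U)$ this is precisely the assertion $x_i \in M(x_i^D,U)$, as required. I would emphasize that the proof uses only the covering condition~1 of Definition \ref{RelInvDiv}; the disjointness condition~2 plays no role in this particular statement. There is thus no genuine obstacle beyond the divisibility observation that forces the vertex to be $x_i^D$.
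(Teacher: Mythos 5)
Your proof is correct and takes essentially the same route as the paper: the paper applies the covering hypothesis to a pure power $x_i^h$ with $h>D$ and observes that the only term of $U=\kT_D$ dividing it is $x_i^D$, which is exactly your argument specialized to $h=D+1$. (Your appeal to Lemma~\ref{RelWGerdt} is superfluous, since $v\mid x_i^{D+1}$ is immediate from $x_i^{D+1}\in vL(v,U)$, but this is only cosmetic.)
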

\begin{proof}
Consider a term $x_i^h, h>D$
By assumption there is a term $u\in U$ s.t $x_i^h=uv$ $h>D$  
with $v\in L(u,U)$.
The only solution is $u=x_i^D$, $v=x_i^{h-D}$ which implies $x_i$ multiplicative for $x_i^D$.
\end{proof}

The following proposition gives a direct proof of the obvious fact that one and only one term $t \in \kT_D$ must have $M_L(t,U)=\{x_1,...,x_n\}$.

\begin{Proposition}\label{UnoTutteVar}

 Let $U:=\kT_D$, $D \in \NN$ and suppose that $\forall u \in U$ a partition of the variables 
 into multiplicative and non-multiplicative ones
 $\{x_1,...,x_n\} =M(u,U) \sqcup NM(u,U)$
 is given s.t., with the above notation, $\cT(U)=\kT_{\geq D}=\bigcup_{u \in U} uL(u,U)$. Then
 $\exists t \in U$ s.t.
 $M(t,U)= \{x_1,...,x_n\}$.
\end{Proposition}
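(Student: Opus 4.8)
The plan is to exhibit the peak directly, mirroring the pure-power trick of Lemma \ref{PotPura} but applied to a term that is ``rich'' in every variable simultaneously. The key observation is that $U=\kT_D$ consists solely of terms of degree exactly $D$, so every exponent appearing in an element of $U$ is at most $D$; hence a term all of whose exponents strictly exceed $D$ must, whenever it is written as $u\cdot v$ with $u\in U$, force the cofactor $v$ to involve \emph{all} $n$ variables. This is exactly the mechanism that will pin down a single $u$ with every variable multiplicative.

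Concretely, I would set $M:=D+1$ and consider the single term $w:=(x_1x_2\cdots x_n)^{M}\in\kT_{nM}$. Since $nM\geq D$, we have $w\in\kT_{\geq D}=\bigcup_{u\in U}uL(u,U)$ by hypothesis, so there is some $u\in U$ with $w\in uL(u,U)$; write $w=uv$ with $v\in L(u,U)$ and $u=x_1^{\beta_1}\cdots x_n^{\beta_n}$, where $\sum_i\beta_i=D$. Then $v=x_1^{M-\beta_1}\cdots x_n^{M-\beta_n}$.

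Next I would check that $v$ has strictly positive exponent in every variable: because $u\in\kT_D$ each $\beta_i\leq D<M$, so $M-\beta_i\geq 1>0$ for all $i$. By the defining property of $L(u,U)$, namely $L(u,U)=\{x_1^{a_1}\cdots x_n^{a_n}\mid a_i\neq 0\Rightarrow x_i\in M(u,U)\}$, every variable $x_i$ occurring in $v$ with nonzero exponent must lie in $M(u,U)$; since this now applies to all $i$, we obtain $M(u,U)=\{x_1,\ldots,x_n\}$, and $t:=u$ is the desired term.

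I do not expect any serious obstacle: the only point requiring care is choosing the auxiliary degree $M$ large enough to dominate every exponent that can occur in an element of $U$, and this is immediate from the fact that $U$ lives in the single degree $D$. An alternative but less self-contained route would invoke Propositions \ref{Bin1} and \ref{UniqBinom}, since the number of terms with $n$ multiplicative variables is forced to be $a_n=\binom{D-1}{0}=1$; however that argument also leans on the disjointness condition $2$ of Definition \ref{RelInvDiv}, which is \emph{not} assumed in the present statement, so the direct argument above is both shorter and stronger (it uses only the covering condition $1$).
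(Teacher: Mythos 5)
Your proof is correct, and it is a genuine streamlining of the paper's argument rather than a copy of it. The paper also applies the covering hypothesis to a single witness term, but it chooses $v:=\mcm(u \mid u\in U)=x_1^D\cdots x_n^D$; with that choice the cofactor $v/t$ can have a zero exponent (exactly when $t$ is a pure power $x_i^D$), so the paper is forced into a two-case analysis and must invoke Lemma \ref{PotPura} to recover $x_i\in M(x_i^D,U)$ in the pure-power case. Your choice of witness $(x_1\cdots x_n)^{D+1}$ dominates every exponent that can occur in an element of $\kT_D$ \emph{strictly}, so the cofactor has positive exponent in every variable no matter which cone captures the witness; this kills the case split and removes the dependence on Lemma \ref{PotPura} entirely, making your proof self-contained. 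Both proofs use only the covering condition and not disjointness, so neither is more general in terms of hypotheses; what you gain is brevity and independence from the auxiliary lemma, and your closing remark correctly identifies why the counting route through Propositions \ref{Bin1} and \ref{UniqBinom} would be inappropriate here.
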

\begin{proof}
Consider the term   $v:=\mcm(u\, \vert \, u \in U)=x_1^D\cdots x_n^D\in \mathcal{T}_{\geq D}$.

Since $\cT(U)=\kT_{\geq D}=\bigcup_{u \in U} uL(u,U)$ there must be a term $t \in U$ s.t. $v \in tL(t,U)$:
 \begin{itemize}
  \item[a)] if $t=x_i^D$, for some $i \in \{1,...,n\}$, then $v=t \prod_{j\in\{1,...,n \}\setminus \{i\}  }x_j^D$,
  so, $\forall j\in\{1,...,n \}\setminus \{i\}$, $x_j \in M(t,U)$. Since, by the above Lemma \ref{PotPura}, $x_i \in M(t,U)$,
  we can conclude that $M(t,U)= \{x_1,...,x_n\}$;
  \item[b)] if   $t=x_1^{\beta_1}\cdots x_n^{\beta_n}$, with $\sum_{i}\beta_i=D$, s.t. $\exists i_1,i_2 \in \{1,...,n\}$,
  $i_1\neq i_2$ s.t. 
  $\beta_{i_1}\neq 0$ and  $\beta_{i_2}\neq 0$, then  $\beta_i <D, \,\forall i \in \{1,...,n\}$.
  \\ In this case, $v= t x_1^{D-\beta_1}\cdots x_n^{D-\beta_n}$, $D-\beta_i >0$, for each $i \in \{1,...,n\}$ and this implies
  $M(t,U)= \{x_1,...,x_n\}$.  
 \end{itemize}
\end{proof}

The only term $t$ such that $M(t,U)=\{x_1,...,x_n\} $ is called the \emph{peak} of $U$.

\begin{Remark}\label{NotoJanet}
 Note that the Janet-Gunther formula (\ref{Binom})  already told us that there is exactly a \emph{single} such $t$.
\end{Remark}

\begin{Remark}\label{Numerologia}
Consider again the setting of Remark \ref{NonCont}.\\
The non-continuity of the described set can be seen also by observing that there are $10$ terms in three variables and 
degree three. Each term of  $\mathcal{T}_1$ has $2$ multiplicative variables, so it generates
 only   $3$ different terms in degree $3$ (and there is no intersection between $xL(x,U)$, $yL(y,U)$ and $zL(z,U)$) 
so, in degree $3$, we only get $9$ terms, instead of $10$  ($xyz$ is exactly the missing one), so this is due to the fact that formula (\ref{Binom}) does not hold, since there is no peak in $U$.
\end{Remark}

In the remanining part of this section we prove a criterion for defining a partitio in multiplicative and non-multiplicative variables s.t. the resulting cones are disjoint.

\begin{Lemma}\label{lcmDentro}
Let $U:=\kT_D$, $D \in \NN$ and suppose that $\forall u \in U$ a partition of the variables 
 into multiplicative and non-multiplicative ones
 $\{x_1,...,x_n\} =M(u,U) \sqcup NM(u,U)$
 is given.\\
 Let $u,v \in U$ and $w=uw_1=vw_2=\mcm(u,v)$. Then
 $$uL(u,U)\cap vL(v,U) \neq \emptyset \Leftrightarrow  w \in uL(u,U)\cap vL(v,U).$$
\end{Lemma}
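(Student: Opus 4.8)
The plan is to dispatch the two implications separately. The direction $(\Leftarrow)$ is immediate: if $w\in uL(u,U)\cap vL(v,U)$ then this intersection manifestly contains an element, so it is nonempty. All the content is in $(\Rightarrow)$, and I would first isolate the single structural fact it rests on. Namely, for every $u\in U$ the set $L(u,U)$ is \emph{closed under taking divisors}: if $t\in L(u,U)$ and $t_1\mid t$, then $t_1\in L(u,U)$. This is not an extra assumption but a direct consequence of the defining formula $L(u,U)=\{x_1^{a_1}\cdots x_n^{a_n}\,\vert\,a_i\neq 0\Rightarrow x_i\in M(u,U)\}$ from Definition \ref{RelInvDiv}; a divisor $t_1$ of $t$ has componentwise smaller exponents, so its support is contained in that of $t$ and hence in $M(u,U)$, forcing $t_1\in L(u,U)$.

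For the forward implication I would take an arbitrary $z\in uL(u,U)\cap vL(v,U)$ and write $z=u\,t_u=v\,t_v$ with $t_u\in L(u,U)$ and $t_v\in L(v,U)$. Since $u\mid z$ and $v\mid z$, the term $z$ is a common multiple of $u$ and $v$, and therefore $w=\mcm(u,v)\mid z$ by the defining property of the least common multiple. Now combine $u\mid w$ (again from $w=\mcm(u,v)$) with $w\mid z$: writing $z=wk$ for a suitable monomial $k$ and $w=uw_1$ gives $z=u\,w_1k$, whence $t_u=z/u=w_1k$ and thus $w_1\mid t_u$. Applying the divisor-closure of $L(u,U)$ to $w_1\mid t_u\in L(u,U)$ yields $w_1\in L(u,U)$, so $w=uw_1\in uL(u,U)$. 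The symmetric argument with $v$, $w_2$ and $t_v$ gives $w=vw_2\in vL(v,U)$, and hence $w\in uL(u,U)\cap vL(v,U)$, which is exactly the claim.

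The proof involves essentially no computation, so the real point is conceptual rather than a hard obstacle: one must recognize that \emph{nothing} beyond the bare definition of the cones is used. In particular the disjointness conditions 1 and 2 of Definition \ref{RelInvDiv} are never invoked, consistently with the fact that the lemma is stated for an arbitrary partition of the variables and not for a genuine relative involutive division. The only step requiring care is the divisibility chain $u\mid w\mid z$ and its correct translation into $w_1\mid t_u$; here it is crucial that $w$ is the \emph{least} common multiple, since this is precisely what promotes ``$z$ is a common multiple of $u$ and $v$'' to ``$w\mid z$'', the hinge on which the argument turns.
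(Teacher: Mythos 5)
Your proof is correct and follows essentially the same route as the paper's: take an element $z$ in the intersection, observe that $w=\mcm(u,v)$ divides it, deduce $w_1\mid t_u$, and conclude by closure of $L(u,U)$ under divisors. The only difference is cosmetic — you spell out the divisor-closure of $L(u,U)$ explicitly, which the paper uses silently in the step ``$t_1=w_1m\in L(u,U)$ whence $w_1\in L(u,U)$'' — and your closing observation that the disjointness conditions of Definition \ref{RelInvDiv} are never needed is accurate.
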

\begin{proof}
We prove only the non-trivial part of the statement. \\
Suppose $t\in uL(u,U)\cap vL(v,U)$, so $t=ut_1=vt_2$ with $t_1\in L(u,U)$ and  $t_2\in L(v,U)$.\\
 Since $w \mid t$ we have $t =wm$ for some $m \in \kT$, so 
 
 $$ t=  ut_1=wm=uw_1m. $$
%
  Dividing by $u$ we get $t_1=w_1m \in L(u,U)$ whence $w_1\in L(u,U)$ and $ w\in uL(u,U)$.
With an analogous argument for v we can show that $ w\in vL(v,U)$,  concluding our proof.
 
\end{proof}

\begin{Proposition}\label{Inibizione1}
Let $U=\kT_D$, $D \in \NN$ and fix on $U$ a relative involutive division $L$.
\\Let $t \in U$ s.t. $M_L(t,U)=\{x_1,...,x_n\}$ and $A:=\{x_{j_1},...,x_{j_h}\}\subset \{x_1,...,x_n\} $ 
be the set of all and only the variables appearing in $t$ with nonzero exponent.
 Then, $\forall u \in U\setminus \{t\}$, $A\nsubseteq M_L(u,U)$.
\end{Proposition}
\begin{proof}
 Suppose   $A \subseteq M_L(u,U)$  and let 
 $w=\mcm(t,u)=\frac{ut}{\mcd(t,u)}$; it may be regarded as
 \begin{itemize}
  \item[a)]  $w=u\frac{t}{\mcd(t,u)}$: the variables appearing in 
    $\frac{t}{\mcd(t,u)}$ with nonzero exponent, all belong to $A\subseteq M_L(u,U)$, so  $w\in C_L(u,U)$;
 \item[b)] $w=t\frac{u}{\mcd(t,u)}$: since $M_L(t,U)=\{x_1,...,x_n\}$, $w\in C_L(t,U)$.
 \end{itemize}
 Thus we can conclude, since   $C_L(u,U)\cap C_L(t,U)\neq \emptyset$ and so condition (ii) of 
 the definition of relative involutive division is contradicted.
\end{proof}
It is now obvious the following
\begin{Corollary}\label{No2CnTutte}
Let $U=\kT_D$, $D \in \NN$ and fix on $U$ a relative involutive division $L$.
Then, there exists one and only one $t\in U$ s.t. $M_L(t,U)=\{x_1,...,x_n\}$.
\end{Corollary}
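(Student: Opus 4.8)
The plan is to assemble the two immediately preceding propositions: Proposition \ref{UnoTutteVar} supplies existence and Proposition \ref{Inibizione1} supplies uniqueness. First I would observe that a relative involutive division $L$ on $U = \kT_D$ satisfies, by condition 1 of Definition \ref{RelInvDiv}, the covering hypothesis $\cT(U) = \kT_{\geq D} = \bigcup_{u \in U} uL(u,U)$. This is precisely the hypothesis required by Proposition \ref{UnoTutteVar}, which therefore produces at least one term $t \in U$ with $M_L(t,U) = \{x_1,\dots,x_n\}$, establishing existence of a peak.

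For uniqueness I would argue by contradiction. Suppose there were two distinct terms $t_1, t_2 \in U$, $t_1 \neq t_2$, each having all variables multiplicative, that is $M_L(t_1,U) = M_L(t_2,U) = \{x_1,\dots,x_n\}$. I would then apply Proposition \ref{Inibizione1} to the term $t_1$, whose multiplicative set is the full variable set, letting $A \subseteq \{x_1,\dots,x_n\}$ denote the set of variables occurring in $t_1$ with nonzero exponent. The proposition asserts that $A \nsubseteq M_L(u,U)$ for every $u \in U \setminus \{t_1\}$. Specializing to $u = t_2$ forces $A \nsubseteq M_L(t_2,U)$. But $M_L(t_2,U) = \{x_1,\dots,x_n\}$ contains every variable, so trivially $A \subseteq M_L(t_2,U)$, a contradiction. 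Hence $t_1 = t_2$, and the peak is unique.

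No step here is a genuine obstacle, which is exactly why the statement is flagged as obvious: both the existence half and the uniqueness half have already been isolated as the two propositions above, and the corollary is merely their conjunction. The only point deserving a moment's care is the bookkeeping, namely checking that the covering hypothesis needed by Proposition \ref{UnoTutteVar} and the ``full multiplicative set'' hypothesis feeding Proposition \ref{Inibizione1} are automatic consequences of $L$ being a relative involutive division (conditions 1 and 2 of Definition \ref{RelInvDiv}), rather than additional assumptions one must verify by hand. Once that observation is recorded, the two-line combination above closes the argument.
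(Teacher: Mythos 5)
Your proof is correct and is exactly the argument the paper intends: the corollary is stated as ``now obvious'' precisely because existence is Proposition \ref{UnoTutteVar} (whose covering hypothesis is condition 1 of Definition \ref{RelInvDiv} applied to $U=\kT_D$) and uniqueness follows from Proposition \ref{Inibizione1} by the contradiction you describe. Nothing further is needed.
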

\begin{Remark}\label{NoInfastidite}
 Let $t \in U$ be the only term s.t. $M(t,U)=\{x_1,...,x_n\}$ and suppose,
 given $i \in \{1,...,n\}$,
 that $t \neq x_i^D$. In this case, $\forall h \geq 0$ we have 
 $x_i^{D+h}\notin tL(t,U)$. Indeed, since $deg(t)=D$ and $t \neq x_i^D$,
 there exists $j \in \{1,...,n\}\setminus\{i\}$ s.t. $x_j \mid t$ and 
 this trivially implies that $\forall t \in tL(t,U)$, $x_j \mid t$ so 
  $x_i^{D+h}\notin tL(t,U)$.
  
This implies that, whatever is the term   $t \in U$ 
s.t. $M(t,U)=\{x_1,...,x_n\}$, the limitations on the choices of multiplicative
 variables imposed by $t$ cannot affect the (necessary) choice imposed 
 by Lemma \ref{PotPura}.\\
Similarly, we can observe that the assignment $x_i \in M(x_i^D,U)$, forall
 $1\leq i \leq n$ does not impose any condition on the future choices, since 
 $\forall h \geq 0$, $x_i^{D+h}$ is not multiple of any $u \in U\setminus\{x_i^D\}$.
\end{Remark}
We see now a criterion for setting a partition on the variabiles into multiplicative and
non-multiplicative ones, so that no intersection between sets of the form $t L(t,U)$ may
arise.

\begin{Proposition}\label{Inibizione2}
 Let $U:=\kT_D$, $D \in \NN$ and suppose that $\forall u \in U$ a partition of the variables 
 into multiplicative and non-multiplicative ones
 $\{x_1,...,x_n\} =M(u,U) \sqcup NM(u,U)$
 is given.
Let $u,v \in U$,   $w:=\mcd(u,v) $  and suppose 
 \begin{itemize}
  \item $\frac{u}{w}=x_{j_1}^{\alpha_1}\cdots x_{j_l}^{\alpha_l}$, $\alpha_i >0$, $i=1,...,l$;
  \item $\frac{v}{w}=x_{k_1}^{\beta_1}\cdots x_{k_s}^{\beta_s}$, $\beta_j >0$, $j=1,...,s$.
 \end{itemize}
Then
$M(u,U)\supseteq \{x_{k_1},...,x_{k_s}\}$ and $M(v,U)\supseteq \{x_{j_1},...,x_{j_l}\}$ if and only if  
$uL(u,U)\cap vL(v,U)\neq \emptyset$.
\end{Proposition}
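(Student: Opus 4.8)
The plan is to collapse the whole question to a single membership test on the least common multiple, and then read off the answer directly from the definition of the multiplicative set. Throughout I write $\ell := \mcm(u,v)$; since $w = \mcd(u,v)$ we have $\ell = \frac{uv}{w} = u\cdot\frac{v}{w} = v\cdot\frac{u}{w}$, where $\frac{u}{w}=x_{j_1}^{\alpha_1}\cdots x_{j_l}^{\alpha_l}$ and $\frac{v}{w}=x_{k_1}^{\beta_1}\cdots x_{k_s}^{\beta_s}$ are the two quotients named in the statement. I also note in passing that the supports of $\frac{u}{w}$ and $\frac{v}{w}$ are disjoint (for each variable at most one of the two quotients carries a nonzero exponent), so the variable sets $\{x_{j_1},\dots,x_{j_l}\}$ and $\{x_{k_1},\dots,x_{k_s}\}$ are disjoint; this is not needed for the argument but clarifies the geometry.

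First I would invoke Lemma \ref{lcmDentro}, applied to the pair $u,v$, to replace the condition $uL(u,U)\cap vL(v,U)\neq\emptyset$ by the equivalent condition $\ell \in uL(u,U)\cap vL(v,U)$. This is the only substantial ingredient: it is exactly what spares me from chasing an arbitrary element of the intersection, reducing the problem to testing membership of the single distinguished term $\ell$ in each of the two cones.

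Next I would unwind the two membership conditions one at a time. Since $\ell = u\cdot\frac{v}{w}$, the defining characterization of the cone in Definition \ref{RelInvDiv} gives $\ell\in uL(u,U)$ if and only if $\frac{v}{w}=x_{k_1}^{\beta_1}\cdots x_{k_s}^{\beta_s}\in L(u,U)$; and by the description of $L(u,U)$ as the submonoid generated by the multiplicative variables, this holds if and only if every variable occurring in $\frac{v}{w}$ with nonzero exponent is multiplicative for $u$, i.e. $\{x_{k_1},\dots,x_{k_s}\}\subseteq M(u,U)$. Symmetrically, writing $\ell = v\cdot\frac{u}{w}$ yields $\ell\in vL(v,U)$ if and only if $\{x_{j_1},\dots,x_{j_l}\}\subseteq M(v,U)$. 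Conjoining these two equivalences with the reduction supplied by Lemma \ref{lcmDentro} gives precisely the claimed biconditional.

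I do not expect a genuine obstacle here, since Lemma \ref{lcmDentro} carries essentially all the weight; the remaining steps are direct applications of the definitions. The one point demanding care is purely notational: the symbol $w$ denotes $\mcd(u,v)$ in the present statement but $\mcm(u,v)$ inside Lemma \ref{lcmDentro}, so I would retain the separate name $\ell$ for the lcm throughout to avoid a clash, and I would double-check that the quotient tested in the cone of $u$ is $\frac{v}{w}$ (whose support is $\{x_{k_1},\dots,x_{k_s}\}$) while the quotient tested in the cone of $v$ is $\frac{u}{w}$ (whose support is $\{x_{j_1},\dots,x_{j_l}\}$), and not the reverse.
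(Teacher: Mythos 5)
Your proof is correct and follows essentially the same route as the paper's: the paper likewise verifies the forward direction by exhibiting $\mcm(u,v)=v\frac{u}{w}=u\frac{v}{w}$ as an element of both cones, and uses Lemma \ref{lcmDentro} for the converse to force $\mcm(u,v)$ into the intersection before reading off the multiplicative-variable containments. Your only (harmless) refinement is to phrase both directions uniformly as a single membership test on the lcm, and your notational caution about $w$ versus $\ell$ is well placed.
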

\begin{proof}
Suppose $M(u,U)\supseteq \{x_{k_1},...,x_{k_s}\}$ and $M(v,U)\supseteq \{x_{j_1},...,x_{j_l}\}$ and 
let  $w':=\mcm(u,v)=\frac{uv}{w}$; then 
 \begin{itemize}
  \item[$a.$] $w'=v \frac{u}{w}=vx_{j_1}^{\alpha_1}\cdots x_{j_l}^{\alpha_l} \in vL(v,U)$, being
   $x_{j_1},...,x_{j_l}\in M(v,U)$ by hypothesis;
  \item[$b.$] $w'=u \frac{v}{w}=u x_{k_1}^{\beta_1}\cdots x_{k_s}^{\beta_s}\in uL(u,U)$, being  
  $x_{k_1},...,x_{k_s}\in M(u,U)$ by hypothesis.
 \end{itemize}
We can then conclude that $w' \in uL(u,U)\cap vL(v,U)\neq \emptyset$. 
\\
Conversely, let 
  $uL(u,U)\cap vL(v,U)\neq \emptyset$. By Lemma \ref{lcmDentro}, we deduce that
 $w':=\mcm(u,v) =\frac{uv}{w}\in uL(u,U)\cap vL(v,U)$. 
 Then $w'=v \frac{u}{w}= vx_{j_1}^{\alpha_1}\cdots x_{j_l}^{\alpha_l} \in vL(v,U)$, so 
 $x_{j_1},...,x_{j_l}\in M(v,U)$  and, similarly,   
 $w'=u \frac{v}{w}=u x_{k_1}^{\beta_1}\cdots x_{k_s}^{\beta_s}\in uL(u,U)$, then  $x_{k_1},...,x_{k_s}\in M(u,U)$, allowing us to 
 conclude.
 \end{proof}

 Given $U:=\kT_D$, $D \in \NN$, suppose that $\forall u \in U$ a partition of the variables 
 into multiplicative and non-multiplicative ones
 $\{x_1,...,x_n\} =M(u,U) \sqcup NM(u,U)$
 is given, so that the binomial formula  (\ref{Binom}) holds and it never happens a situation as that described in Proposition  
\ref{Inibizione2}, then a relative involutive division is assigned, so $\cT(U)=\bigcup_{u \in U} C_L(u,U)$
and $\forall u,v \in U$, $C_L(u,U) \cap C_L(v,U) = \emptyset$.
\\
Indeed, the condition  $\forall u,v \in U$, $C_L(u,U) \cap C_L(v,U) = \emptyset$ immediately follows from 
Proposition  
\ref{Inibizione2}, whereas the condition  $\cT(U)=\bigcup_{u \in U} C_L(u,U)$ comes from the binomial formula 
(\ref{Binom}), observing that the relative  cones have been proved to be all disjoint. 

We have then found a criterion that, combined to condition (\ref{Binom}) allows us to define a relative involutive division.
\begin{example}\label{ConfigNoInscat}
Take, for example, $D=2$ and   $n=3$.
We have $U:=\mathcal{T}_2=\{x^2,xy,y^2,xz,yz,z^2\}$,
$\vert U\vert =6$.
\\
By Lemma \ref{PotPura}, we have to impose 
 $x \in \mult(x^2,U)$, 
$y \in \mult(y^2,U)$ and $z \in \mult(z^2,U)$. 

Then, by Corollary \ref{No2CnTutte}
 we have to take one and only one element of $U$ to which 
 assign all the variables as multiplicative; in our example 
 we take    $xy \in U$, so  
$\mult(xy,U)=\{x,y,z\}$.\\
By Proposition \ref{Inibizione2}, assigning 
$\mult(xy,U)=\{x,y,z\}$ imposes some limitations on 
the ways we can assign the multiplicative variables to the
 other terms.
\\
More precisely, since   $x \in \mult(xy,U)$ then 
 $y \notin \mult(x^2,U)$ and $y 
\in \mult(xy,U)$ implies $x \notin \mult(y^2,U)$.
\\
Moreover, $z \in \mult(xy,U)$ implies $y \notin \mult(xz,U)$, 
$x \notin \mult(yz,U)$ and  $\{x,y\}\nsubseteq \mult(z^2,U)$.
\\
After these first steps, we have the following configuration
\begin{center}
\begin{tabular}{|c|c|}
\hline
Terms & Multiplicative Variables\\
\hline
$x^2$& $x,\times ,?$\\
\hline
$xy$& $x,y,z$\\
\hline
$y^2$ &$\times ,y,?$\\
\hline
$xz$ & $?,\times ,?$\\
\hline
$yz$ & $\times ,?,?$\\
\hline
$z^2$ & $/,/z$\\
\hline
\end{tabular}
\end{center}
where we denote by the symbol 
 $?$ the free variables, i.e. those that can be freely assigned as 
 multiplicative for the term in the same row of the table\footnote{They are not affected 
 by the choice previously made.}, by  $\times$ 
  the variables which cannot be assigned as 
 multiplicative  and by $/$ the variables that cannot be contemporarily
  assigned as multiplicative for the term in the same row of the table.
 \\
 Now, taking into account the limitations stated above, we 
  assign two multiplicative variables to $xz \in U$:
$\mult(xz)=\{x,z\}$. 
\\
Again this choice will impose some limitations on 
the future choices:  $x \in \mult(xz,U)$ so $z \notin \mult(x^2)$, 
 whereas 
$z \in \mult(xz,U)$ implies $x \notin \mult(z^2,U)$.
\\
Then, after this step, we reach the configuration below: 
\begin{center}
\begin{tabular}{|c|c|}
\hline
Terms & Multiplicative Variables\\
\hline
$x^2$& $x,\times ,\times$\\
\hline
$xy$& $x,y,z$\\
\hline
$y^2$ &$\times ,y,?$\\
\hline
$xz$ & $x,\times ,z$\\
\hline
$yz$ & $\times ,?,?$\\
\hline
$z^2$ & $\times ,?,z$\\
\hline
\end{tabular}
\end{center}
We set now $\mult(z^2,U)=\{y,z\}$; 
  $y \in \mult(z^2,U)$ leads to  $z \notin \mult(y^2,U)$ and 
  $z \notin \mult(yz,U)$, so
\begin{center}
\begin{tabular}{|c|c|}
\hline
Terms & Multiplicative Variables\\
\hline
$x^2$& $x,\times ,\times$\\
\hline
$xy$& $x,y,z$\\
\hline
$y^2$ &$\times ,y,\times$\\
\hline
$xz$ & $x,\times ,z$\\
\hline
$yz$ & $\times ,?,\times$\\
\hline
$z^2$ & $\times ,y,z$\\
\hline
\end{tabular}
\end{center}
We get $\mult(y^2,U)=\{y\}$, which does not impose any relation 
on the future choices. 
Looking at the configuration, we finally get   $\mult(yz,U)=\{y\}$ and
$\mult(x^2)=\{x\}$, so we conclude with the configuration below:
\begin{center}
\begin{tabular}{|c|c|}
\hline
Terms & Multiplicative Variables\\
\hline
$x^2$& $x$\\
\hline
$xy$& $x,y,z$\\
\hline
$y^2$ &$y$\\
\hline
$xz$ & $x,z$\\
\hline
$yz$ & $y$\\
\hline
$z^2$ & $y,z$\\
\hline
\end{tabular}
\end{center}

 \end{example}

In Appendix \ref{elencone}, all the relative involutive divisions 
of $\kT_2 \subset \ck[x,y,z]$ are displayed, up to a permutation of the variables.
\medskip

\subsection{Pommaret relative involutive division}\label{PRID}
This section is devoted to the study of a complete characterization for 
  Pommaret relative involutive division.
  
    \begin{Proposition}\label{PommaretInscat}
    Let $U=\kT_D=\{u_1,...,u_l\}$, $D\in \NN$, $l={ D+n-1 \choose n-1}$ and suppose that a relative involutive division $L$
  is defined on $U$ in such a way that there is a relabelling of the terms in $U$ s.t.
  $$M_L(u_1,U) \subseteq M_L(u_2,U) \subseteq ... \subseteq M_L(u_l,U)=\{x_1,...,x_n\}.$$
  Let us denote for each $1 \leq i \leq n$, $\overline{u}_i=x_i^D$.
  Then, up to a reordering and a relabelling of the variables 
  the following properties hold:
  \begin{enumerate}
  \item $M(\overline{u}_1,U) \subseteq M_L(\overline{u}_2,U) \subseteq ... \subseteq M_L(\overline{u}_n,U) \subseteq\{x_1,...,x_n\}$,
   \item the (unique) term $u_l$ s.t. $M_L(u_l,U)=\{x_1,...,x_n\}$ has the form $u_l=x_i^D$, for some $1 \leq i \leq n$.
    \item for $1 \leq i \leq n$, $M_L(\overline{u}_i,U)=\{x_j \vert 1 \leq j\leq i \}.$
   \item under such reordering and relabelling of the variables, $L$ coincides with Pommaret division defined on $U$, i.e.
$$\forall u \in U, \, M_L(u,U)=\{x_i \vert x_i \leq \min(u)\}. $$
  \end{enumerate}
  \end{Proposition}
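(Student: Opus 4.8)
The plan is to first turn the chain hypothesis into a rigid description of the multiplicative sets, then establish part (4) by induction on $n$, and finally read off parts (1)--(3) as immediate corollaries.

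First I would use that a family of sets totally ordered by inclusion contains at most one set of each cardinality. By Proposition \ref{Bin1} there are exactly $\sigma_k:=\binom{D+n-1-k}{n-k}\ge 1$ terms $u\in U$ with $\vert M_L(u,U)\vert=k$, for each $k=1,\dots,n$. Since the hypothesis places all the sets $M_L(u_i,U)$ in one inclusion chain, all terms whose multiplicative set has a given cardinality $k$ must share a single set $W_k$, and $W_1\subsetneq W_2\subsetneq\cdots\subsetneq W_n=\{x_1,\dots,x_n\}$ is a complete flag. Reordering and relabelling the variables so that $W_k=\{x_1,\dots,x_k\}$ — precisely the relabelling allowed in the statement — I obtain, for every $u\in U$, that $M_L(u,U)=\{x_1,\dots,x_{k(u)}\}$ with $k(u):=\vert M_L(u,U)\vert$, so that $C_L(u,U)=u\cdot\langle x_1,\dots,x_{k(u)}\rangle$.

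It then suffices to prove (4), namely $k(u)=j(u)$ for all $u$, where $\min(u)=x_{j(u)}$; indeed this says $M_L(u,U)=\{x_i:x_i\le\min(u)\}$. I would isolate the following self-contained claim and prove it by induction on $n$: \emph{if a relative involutive division on $\kT_D\subset\ck[x_1,\dots,x_n]$ has every multiplicative set equal to a standard initial segment $\{x_1,\dots,x_{k(u)}\}$, then it is the Pommaret division.} The base $n=1$ is trivial, since the unique term $x_1^D$ has $x_1$ multiplicative by Lemma \ref{PotPura}. For the inductive step the crucial device is restriction to the last $n-1$ variables: set $U':=\{u\in U:x_1\nmid u\}$, which is exactly $\kT_D$ in $x_2,\dots,x_n$, and for $u\in U'$ put $C'_L(u):=C_L(u,U)\cap\ck[x_2,\dots,x_n]$. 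As $u$ is free of $x_1$, the only occurrences of $x_1$ in $C_L(u,U)=u\cdot\langle x_1,\dots,x_{k(u)}\rangle$ come from the factor $x_1$, whence $C'_L(u)=u\cdot\langle x_2,\dots,x_{k(u)}\rangle$; and any cone $C_L(w,U)$ with $x_1\mid w$ avoids $\ck[x_2,\dots,x_n]$ entirely. Thus the $C'_L(u)$, $u\in U'$, are pairwise disjoint and, by condition~1 of Definition \ref{RelInvDiv}, cover every term of degree $\ge D$ in $x_2,\dots,x_n$; they therefore constitute a relative involutive division on $\kT_D$ in the $n-1$ variables $x_2,\dots,x_n$, again with standard initial-segment multiplicative sets $\{x_2,\dots,x_{k(u)}\}$. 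By the induction hypothesis this restricted division is Pommaret, so $k(u)=j(u)$ for every $u$ with $\min(u)\ge x_2$.

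The only terms still untreated are those divisible by $x_1$, and I would dispatch them by counting. The inductive step shows that no term with $\min(u)\ge x_2$ has $k(u)=1$, so $\{u:k(u)=1\}\subseteq\{u:x_1\mid u\}$; by Proposition \ref{Bin1} the left-hand set has $\sigma_1=\binom{D+n-2}{n-1}$ elements, while the right-hand set has $\binom{D+n-1}{n-1}-\binom{D+n-2}{n-2}=\binom{D+n-2}{n-1}$ elements, the same number. An inclusion of equinumerous finite sets is an equality, so every $u$ with $\min(u)=x_1$ has $k(u)=1=j(u)$. Together with the inductive step this gives $M_L(u,U)=\{x_i:x_i\le\min(u)\}$ for all $u$, which is (4); specialising to the pure powers $\overline{u}_i=x_i^D$ then yields $M_L(\overline{u}_i,U)=\{x_1,\dots,x_i\}$ (part (3)), hence the chain of part (1), and forces the unique peak (the term with $k=n$) to be the unique term with minimal variable $x_n$, i.e. $x_n^D$ (part (2)). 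I expect the genuine obstacle to be exactly this passage: a purely pairwise use of disjointness (Proposition \ref{Inibizione2}) does not suffice, because assuming $x_{j(u)}\notin M_L(u,U)$ never on its own produces an overlap of two cones — the obstruction is global and surfaces only through the covering condition, which is what the restriction-plus-induction and the final cardinality count exploit.
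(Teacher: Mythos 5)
Your proof is correct in substance but follows a genuinely different route from the paper's. The paper anchors everything at the pure powers: it uses Lemma \ref{PotPura} and the chain hypothesis to obtain $\{x_1,\dots,x_i\}\subseteq M_L(x_i^D,U)$, sharpens this to an equality by a pairwise cone-intersection argument, and then proves item 4 term by term, combining Proposition \ref{Inibizione2} with the covering condition applied to carefully chosen terms of degree $D+1$ (the terms $wx_l$ and $ux_j$ in the paper's notation); this is a local, $\mcm$-based argument. You instead globalize: the enumerative results force exactly $\sigma_k\ge 1$ terms of each multiplicative cardinality $k$, the chain hypothesis then produces a complete flag $W_1\subsetneq\dots\subsetneq W_n$, and after relabelling you run an induction on $n$ whose engine is the restriction $C_L(u,U)\cap\ck[x_2,\dots,x_n]$ — which you correctly verify is again a relative involutive division on $\kT_D\subset\ck[x_2,\dots,x_n]$ — plus a final cardinality count for the terms divisible by $x_1$. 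The slicing construction and the counting finish are absent from the paper, and they make explicit your (accurate) diagnosis that it is the covering condition, not pairwise disjointness, that forces Pommaret; the paper exploits this only implicitly through its degree-$D+1$ arguments.

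Two points need attention to make your argument airtight. First, the exact count ``there are exactly $\sigma_k$ terms with $k$ multiplicative variables'' is not Proposition \ref{Bin1} alone but Proposition \ref{Bin1} combined with the uniqueness statement of Proposition \ref{UniqBinom}. Second, and more seriously, Definition \ref{RelInvDiv} does not forbid a term from having an \emph{empty} multiplicative set, and formula (\ref{Hil}) is derived in the paper under the tacit assumption that every term has at least one multiplicative variable. This matters precisely in your inductive step: the restricted division assigns to $u\in U'$ the set $\{x_2,\dots,x_{k(u)}\}$, which is empty exactly when $k(u)=1$, and ruling out such terms is part of what the induction hypothesis is supposed to deliver — so if your inductive claim is read as requiring nonempty initial segments, its application to the restricted division is circular. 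The fix stays entirely within your own method: formulate the claim allowing empty initial segments, and observe that a cone with no multiplicative variables contributes nothing in degrees $>D$, so the polynomial-identity argument of Proposition \ref{UniqBinom}, run over $d\ge 1$, still yields $a_k=\sigma_k$ for all $k\ge 1$, whence $a_0=0$ by comparing total cardinalities; with this, your final counting step simultaneously eliminates empty multiplicative sets and pins $M_L(u,U)=\{x_1\}$ on every $u$ divisible by $x_1$.
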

\begin{proof}
\noindent
 \begin{enumerate}
 \item For a particular reordering  
 we w.l.o.g. have
$$M_L(\overline{u}_{j_1},U) \subseteq M_L(\overline{u}_{j_2},U) \subseteq ... \subseteq M_L(\overline{u}_{j_n},U).$$
  It is then sufficient to choose a such reordering and to relabel the variables setting $x_i:=x_{j_i}$ for each $i$ in order to obtain the claim.
  \item 
 By Lemma \ref{PotPura}, for each $1 \leq i \leq n$, $x_i \in M(\overline{u}_i,U)$, so $x_1 \in M_L(\overline{u}_1,U), \{x_1,x_2\} \subseteq M_L(\overline{u}_2,U), ... \{x_1,...,x_n\}\subseteq M_L(\overline{u}_n,U)$
 so $M_L(\overline{u}_n,U)=\{x_1,...,x_n\}$, so $u_l=\overline{u}_n$ and then we can conclude.
  \item  In the proof of 2. we have already shown that, for each $1 \leq i \leq n$, 
 $M_L(\overline{u}_i,U)\supseteq \{x_j \vert 1 \leq j\leq i \}$, so we only need to prove that $\forall 1 \leq i \leq n$, $\forall i < j \leq n$,
 $x_j \notin M_L(\overline{u}_i,U)$.\\
 If $x_j \in M_L(\overline{u}_i,U)$, being $x_i \in M_L(\overline{u}_i,U) \subset M(\overline{u}_j,U)$, we have $\overline{u}_ix_j^D=u_jx_i^D  \in C_L(\overline{u}_i,U) \cap C_L(\overline{u}_j,U)$, leading
  to a contradiction.
  \item 
By 3., the inequalities stated in 4. are strict.\\
Taking the ordering $x_1<...<x_n$ on the variables we notice that, again by  3., the multiplicative variable sets
 for pure powers are in accordance with Pommaret division. Then, we only have to prove the assertion for non-pure powers.
 \smallskip
 
 Consider $u:=x_j^{\alpha_j}x_{j+1}^{\alpha_{j+1}} \cdots x_h^{\alpha_h} \in U$, $h>j$.
 We first prove that $\forall x_l>x_j$, $x_l \notin M_L(u,U)$.\\
Let $x_l \in _LM(u,U)$, $j< l $. 
\\
This implies $\{x_1,...,x_l\} \subset M(u,U)$; indeed, if for some $1 \leq k <l$, $x_k \notin M_L(u,U)$, 
then $\{x_1,...,x_k\}=M_L(x_k^D,U) \nsubseteq M_L(u,U)$ and $M_L(u,U) \nsubseteq M_L(x_k^D,U) $, contradicting the hypothesis.
\\
We take then $w:=\frac{ux_l}{x_j}=(x_j^{\alpha_j -1}    \cdots x_h^{\alpha_h})(x_l^{\alpha_l +1}) \in U$ 
and we can observe that $x_j \notin M_L(w,U)$, since $\frac{u}{GCD(u,w)}=x_j$ and 
$\frac{w}{GCD(u,w)}=x_l$ and we can apply Proposition \ref{Inibizione2} (since $x_l \in M(u,U)$, if $x_j \in M(w,U)$ then
$C_L(u,U) \cap C_L(w,U) \neq \emptyset$). 

Being $x_l>x_j$ and since the sets of multiplicative variables for the terms in $L$ form a chain of inclusions, we have 
$$x_j \notin M_L(w,U) \Rightarrow x_l \notin M_L(w,U).$$

Thus, consider $wx_l=\frac{ux_l^2}{x_j}=(x_j^{\alpha_j -1}    \cdots x_h^{\alpha_h} )(x_l^{\alpha_l +2}) $; 
we have $deg(wx_l)=D+1$, so it must belong to the relative involutive cone of one of its  predecessors, different 
from $w$, since $x_l \notin M_L(w,U)$. So take $x_m \neq x_l$ s.t. $x_m \mid wx_l$ and compute
$t:=\frac{wx_l}{x_m} \in U$. If $x_m \in M_L(t,U)$ then $\{x_1,...,x_m\} \subset M_L(t,U)$ and so
$tx_m x_j=ux_l^2 \in C_L(t,U) \cap C_L(u,U)$, which is impossible by definition of relative involutive division.
 
 Now, we only have to prove that $x_j=\min(u) \in M_L(u,U)$, because, if so, $\{x_1,...,x_j \}\subset M(u,U)$ and 
 by the argument above, this leads to the equality $\{x_1,...,x_j \}= M_L(u,U)$.
 
 Suppose then $x_j  \notin M_L(u,U)$ and consider $ux_j$. Since $\deg(ux_j)=D+1$, $ux_j$ must belong to the involutive cone of 
 one of its predecessors, but  it cannot belong to the cone of $u$. Let $x_m \mid ux_j$, $x_m \neq x_j$, but since $x_j=\min(u) $
 $x_m>x_j$. Consider $t=\frac{ux_j}{x_m}$; in order to have $ux_j \in C_L(t,U)$, $x_m$ should be multiplicative for $t$, 
 but this is impossible 
 since $\min(t)=x_j<x_m$.
 
 \end{enumerate}

\end{proof}

\begin{Corollary}  Let $U=\kT_D=\{u_1,...,u_l\}$, $D\in \NN$, $l={ D+n-1 \choose n-1}$ and suppose that a relative involutive division $L$
  is defined on $U$ in such a way that there is a relabelling of the terms in $U$ s.t.
  $$M_L(u_1,U) \subseteq M_L(u_2,U) \subseteq ... \subseteq M_L(u_l,U)=\{x_1,...,x_n\}.$$
  
  Then there is a reordering of the variables $x_{j_1}<x_{j_2}<...<x_{j_n}$ under which. $L$ coincides with Pommaret division defined on $U$, i.e.
$$\forall u \in U, \, M_L(u,U)=\{x_i \vert x_i \leq \min(u)\}. $$
 
\end{Corollary}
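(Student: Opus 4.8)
The plan is to derive this statement directly from Proposition~\ref{PommaretInscat}, whose hypothesis is word-for-word the one assumed here. Indeed, the chain condition $M_L(u_1,U) \subseteq \cdots \subseteq M_L(u_l,U) = \{x_1,\dots,x_n\}$ is exactly what that proposition requires, so all four of its conclusions are at our disposal and no independent argument is needed.

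First I would invoke part~1 of Proposition~\ref{PommaretInscat} to extract the reordering of the pure powers: there is a permutation $j_1,\dots,j_n$ of $\{1,\dots,n\}$ such that, writing $\overline u_i = x_i^D$,
$$M_L(\overline u_{j_1},U) \subseteq M_L(\overline u_{j_2},U) \subseteq \cdots \subseteq M_L(\overline u_{j_n},U).$$
Relabelling the variables by $x_i := x_{j_i}$ is exactly the same datum as fixing the variable ordering $x_{j_1} < x_{j_2} < \cdots < x_{j_n}$, and this is the reordering of variables whose existence the corollary asserts. The point worth spelling out is that the ``reordering and relabelling of the variables'' appearing in Proposition~\ref{PommaretInscat} is genuinely a reordering of the variable set, so it can be recorded as an ordering $x_{j_1}<\cdots<x_{j_n}$ without any loss.

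Finally, under this ordering part~4 of Proposition~\ref{PommaretInscat} yields precisely $M_L(u,U) = \{x_i : x_i \le \min(u)\}$ for every $u \in U$, which is the defining rule of Pommaret division (Example~\ref{PommDiv}). Since the corollary's conclusion is literally this equality, the result follows immediately. I do not expect any genuine obstacle: the whole combinatorial core — that a chain of multiplicative sets on the pure powers forces the Pommaret pattern on every term of $\kT_D$, through the $\mcm$-based inhibition criterion of Proposition~\ref{Inibizione2} — has already been discharged inside Proposition~\ref{PommaretInscat}. The corollary merely repackages part~4 of that proposition as an existence statement about variable orderings, so the only thing to verify is the harmless identification of a variable relabelling with a variable ordering, which is what the middle paragraph does.
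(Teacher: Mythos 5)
Your proposal is correct and matches the paper exactly: the paper states this Corollary without proof, treating it as an immediate repackaging of Proposition~\ref{PommaretInscat}, whose hypothesis is identical and whose parts~1 and~4 deliver precisely the variable reordering and the Pommaret identity $M_L(u,U)=\{x_i \mid x_i \le \min(u)\}$. Your only added content --- the observation that the proposition's ``relabelling'' of variables is the same datum as an ordering $x_{j_1}<\cdots<x_{j_n}$ --- is exactly the harmless identification the paper leaves implicit.
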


\section{The ideal and the graph}\label{graph}

In this section, given $U=\kT_D$, $D\in \NN$ and supposed that a relative involutive 
division $L$
 is defined on $U$, we deal with defining semigroup ideals generated by a subset of 
 terms in $U$ and the associated order ideals, in order to be consistent with the cone decomposition induced by $L$.
\\
In particular, given a term $t \in U$, if $t$ is a generator for the semigroup  ideal we want to construct,
all its multiple must belong to the same semigroup  ideal, so we have to consider as generators of the semigroup ideal all
the elements in $U$ whose involutive cone contain some multiple of $t$.
Clearly, an analogous argument apply to order ideals. Anyway, we first see the case of semigroup ideals.
\\
 Let $U=\kT_D$, $D\in \NN$  and suppose that a relative involutive division $L$
  is defined on $U$.\\
  From now on, for each $s,t \in U$, $X(s,t)$ will denote the unique element in $U$ s.t. 
  $\mcm(s, t)\in C_L(X(s,t),U) $; it exists by condition 1. (or 1'.) of Definition 
 \ref{RelInvDiv}, whereas its uniqueness is granted by condition 2. (or 2'.) of the same definition. 
\begin{Definition}\label{compliant}
With the above notation, a set $M \subseteq U$ is called \emph{compliant} if 
$\forall s,t  \in U$, $t \in M$ 
$ \Rightarrow X(s,t) \in M.$

\end{Definition}

\begin{Proposition} \label{IdealeGrafoFree}
 Let $U=\kT_D$, $D\in \NN$  and suppose that a relative involutive division $L$
  is defined on $U$. Let $M \subseteq U$ and $J=(M)$. Then it holds\\
$J=\bigcup_{l \in M} C_L(l,U) \Longleftrightarrow M $ is compliant.
\end{Proposition}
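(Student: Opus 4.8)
The plan is to prove the biconditional by establishing both implications, centering everything on the behavior of $X(s,t)$, the unique vertex whose cone contains $\mcm(s,t)$.

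First I would prove the forward direction ($J=\bigcup_{l\in M}C_L(l,U)\Rightarrow M$ compliant) by contraposition or directly. Suppose $J=\bigcup_{l\in M}C_L(l,U)$ and take any $t\in M$ and any $s\in U$. Since $t\in M$ and $J=(M)$, every multiple of $t$ lies in $J$; in particular $\mcm(s,t)\in J=\bigcup_{l\in M}C_L(l,U)$. By definition $\mcm(s,t)\in C_L(X(s,t),U)$, and uniqueness of involutive divisors (condition 2 of Definition \ref{RelInvDiv}) forces $X(s,t)$ to be the unique element of $U$ whose cone contains $\mcm(s,t)$. Because the cones of the elements of $M$ are disjoint and their union is exactly $J$, the element of $U$ whose cone meets $J$ at $\mcm(s,t)$ must itself lie in $M$; hence $X(s,t)\in M$. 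This shows $M$ is compliant.

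Next I would prove the reverse direction ($M$ compliant $\Rightarrow J=\bigcup_{l\in M}C_L(l,U)$). The inclusion $\bigcup_{l\in M}C_L(l,U)\subseteq J$ is immediate since each $C_L(l,U)=lL(l,U)\subseteq(l)\subseteq(M)=J$. For the nontrivial inclusion $J\subseteq\bigcup_{l\in M}C_L(l,U)$, take an arbitrary element of $J$; it has the form $\sigma t$ for some $\sigma\in\mathcal{T}$ and $t\in M$. Since $\sigma t\in\mathcal{T}_{\geq D}=\cT(U)=\bigcup_{u\in U}C_L(u,U)$, there is a unique $u\in U$ with $\sigma t\in C_L(u,U)$. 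The goal is to show $u\in M$. Here I would invoke Lemma \ref{lcmDentro} together with compliance: the key observation is that $\mcm(u,t)$ divides $\sigma t$ (because $u\mid \sigma t$ and $t\mid\sigma t$), so one can argue that $\sigma t\in C_L(u,U)$ forces $\mcm(u,t)\in C_L(u,U)$, whence $\mcm(u,t)\in C_L(u,U)\cap C_L(t,U)$ would connect $u$ to $t$ via $X(\,\cdot\,,t)$; more precisely $X(u,t)=u$ by uniqueness of the cone containing $\mcm(u,t)$, and compliance with $t\in M$ then yields $u=X(u,t)\in M$.

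The main obstacle I anticipate is the last step: cleanly showing that $\sigma t\in C_L(u,U)$ implies $\mcm(u,t)\in C_L(u,U)$, so that $u=X(u,t)$. The subtlety is that $\sigma t$ is an arbitrary multiple of $t$, not $\mcm(u,t)$ itself, so I need to transfer membership ``downward'' from $\sigma t$ to $\mcm(u,t)$. Since $u\mid_L \sigma t$ means $\sigma t=u\cdot w$ with $w\in L(u,U)$, and $\mcm(u,t)=u\cdot w_1$ where $w_1\mid w$ (because $\mcm(u,t)\mid \sigma t$ and both are multiples of $u$), property (a) of Definition \ref{InvDef} (i.e. $L(u,U)$ is closed under taking divisors) gives $w_1\in L(u,U)$, hence $\mcm(u,t)\in C_L(u,U)$. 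This is exactly the mechanism already used in the proof of Lemma \ref{lcmDentro}, so I would either cite that lemma directly or reproduce its short divisibility argument. Once $u=X(u,t)$ is established, compliance closes the proof.
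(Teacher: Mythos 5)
Your proof is correct, and the reverse implication follows a genuinely different, and much shorter, route than the paper's. The forward direction is the same in both: $\mcm(s,t)\in J=\bigcup_{l\in M}C_L(l,U)$, and disjointness of the cones forces the unique vertex $X(s,t)$ of the cone containing $\mcm(s,t)$ to lie in $M$. For the converse, the paper argues by a minimal counterexample on degree: it settles degrees $D$ and $D+1$ separately (the latter via Lemma \ref{PotPura}), then for a minimal-degree counterexample $u=vm=wx_j=w'm'x_j$ it tracks a predecessor $w$, its involutive divisor $w'\in M$, and a chain of $\mcd$/$\mcm$ identities, finally exhibiting $u=\mcm(w',v)$ so that compliance applies to $v=X(w',v)$. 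You instead argue directly and uniformly: any $z\in J$ equals $\sigma t$ with $t\in M$; if $u$ is the unique vertex with $z\in C_L(u,U)$, then $\mcm(u,t)\mid z$, and writing $z=uw$, $\mcm(u,t)=uw_1$ with $w_1\mid w$, divisor-closedness of $L(u,U)$ gives $\mcm(u,t)\in C_L(u,U)$, i.e. $X(u,t)=u$, and compliance with $t\in M$ yields $u\in M$ --- no induction, no case analysis, no appeal to Lemma \ref{PotPura}. Two small caveats, neither of which harms the argument. First, you cannot ``cite Lemma \ref{lcmDentro} directly'': its hypothesis is $C_L(u,U)\cap C_L(t,U)\neq\emptyset$, which you do not have (by disjointness it could only hold when $u=t$); what you need, and correctly reproduce, is the one-sided divisibility mechanism from that lemma's proof. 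For the same reason your clause ``whence $\mcm(u,t)\in C_L(u,U)\cap C_L(t,U)$'' is a slip, but it is immediately superseded by the correct claim $X(u,t)=u$, on which the argument actually rests. Second, divisor-closedness of $L(u,U)$ is better justified from the explicit description of $L(u,U)$ in Definition \ref{RelInvDiv} rather than from Definition \ref{InvDef}(a), which concerns Gerdt--Blinkov divisions; for relative divisions it is immediate. What the paper's longer proof buys is the explicit predecessor-by-predecessor chain that anticipates the graph-walking constructions of the later sections; what yours buys is brevity and transparency.
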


\begin{proof}
``$\Rightarrow$'' 
With the notation of Definition \ref{compliant}, assuming $t\in M$, we need to prove $X(s,t) \in M$.
 Since $t\in M \then \mcm(s,t)\in J$ and  the only cone containing $\mcm(s,t)$ is $C_L(X(s,t),U)$, then $C_L(X(s,t),U)\subset J$ and hence $X(s,t)\in M$.
 \\
 \medskip

  ``$\Leftarrow$'' We first observe that, clearly, $J \supseteq \bigcup_{l \in M} C_L(l,U)$ since the elements of this union are by definition multiples of some generators of $J$; then we have only to 
prove that $J \subseteq \bigcup_{l \in M} C_L(l,U)$, i.e. that 

$\forall u \in J$, $\exists v \in M$ s.t. $u \in C_L(v,U)$.\\

If $\deg(u)=D$, then $u \in M$, so $u \in C_L(u,U)$. If $\deg(U)=D+1$, then, since $U \in J$ and so it is multiple of some generator, there exists $v' \in M$ s.t. $u=v'x_j$ for some variable $x_j$.
If $x_j \in M_L(v',U)$, then $u \in C_L(v',U)$
Otherwise, we consider $x_i \mid v'$, $x_i \neq x_j$ \footnote{It is always possible to find  $x_i \mid v'$, $x_i \neq x_j$,
because otherwise $v'=X_j^D$ and so, by Lemma \ref{PotPura}  $x_j \in M_L(v',U)$.}. Clearly $x_i \mid u$, so we can take $v''=\frac{u x_j}{x_i}$; we have $v' \in M$, $v'' \in U$ and  
$\mcm(v',v'')=u$, so by hypothesis, $X(v',v'')\in M$, allowing us to conclude that $u \in \bigcup_{l \in M} C_L(l,U)$.
\\
\smallskip

In order to prove the claim we consider a counterexample of minimal degree.
So we assume to have
\begin{itemize}
 \item $u \in J$, $\deg(u) =D+h+1$, $h\geq 1$,
\item the single element $v \in U$ s.t. $u \in C_L(v,U)$ (whose existence and uniqueness are granted by definition of relative involutive division) and which satisfies $v \in \cN(J)$ thus giving the required contradiction;
\item $m \in L(v,U)$, $\deg(m)=h+1 : u=vm$;
\item since $u \in J$, it has a predecessor $w\in J, \deg(w)=D+h$ and 
\item a variable $x_j : u=x_jw$;
\item the element $w' \in M$ s.t. $w \in C_L(w',U)$ and
\item the related cofactor $m' \in L(w',U) : w=w'm'$
\end{itemize}
so that we have relation
\begin{equation}\label{tsm}
u=vm=wx_j=w'm'x_j.
\end{equation}

We remark that
\begin{enumerate}
\item $v \in \cN(J), w' \in J \then v \neq w'$;
\item whence
$w\notin C_L(v,U)$ whence
 \item the minimality of the counterexample implies $x_j \in NM_L(v,U)$ (otherwise $w$ would be a counterexample of lesser degree); 
 moreover    
\item $w\in C_L(w',U),\, u=wx_j\notin C_L(w',U) \then x_j \in NM_L(w',U)$ whence
\item $x_j \nmid m\in L(v,U)$ so that
\item $x_j \mid v$
\end{enumerate}
and we can define
\begin{itemize}
 \item $v'=\frac{v}{x_j},$
 \item $t_{v'}=\frac{v'}{\mcd(w',v')},$
 \item $t_m : m=t_{v'}t_m$
\end{itemize}
and get, dividing  \eqref{tsm} by $x_j$,

\begin{equation}
\frac{u}{x_j}=w=v'm=w't_{v'}t_m=w'm'.
\end{equation}
%
%
Then 
\begin{enumerate}\setcounter{enumi}{6}
 \item  $t_m= 1$ since, otherwise the term
$vt_{v'}$ would contradict minimality; so
\item $v'm=w' \frac{v'}{\mcd(w',v')}$ whence
\item $m=\frac{w'}{\mcd(w',v')} \mid w'$.
\item $\deg_j(\mcd(w',v') )= \min\{\deg_j(w'), \deg_j(v')\}$;
\item since $\deg_j(\mcd(w',v') )= \deg_j(v')\then x_j \mid m$ which is false, we have
\item $\deg_j(\mcd(w',v') )= \deg_j(w')$ and 
\item $\gcd(w',v')=\gcd(w',v)$.
\end{enumerate}
We have then  
 $$u=v'x_jm=vm=\frac{w'v}{\mcd(w',v')}=  \mcm(w',v) .$$
Now we have $w' \in M, v \in U, \mcm (w',v)=u \in C_L(v,U)$ which, by hypothesis implies $v=X(w',v) \in M$ proving that no counterexample exists.

\end{proof}

\begin{Remark}\label{cononecompliant}
 Let $U=\kT_D$ and suppose a relative involutive division $L$ to be defined on $U$. Let $t$ be the only
 element s.t. $M_L(t,U)=\{x_1,...,x_n\}$. The above proposition shows that if $M \neq \emptyset$ necessarily $t \in M$.
 Indeed, for each $u \in U$, $t=X(u,t)$.
\end{Remark}

  \begin{Proposition}\label{NoVerif}
  Let $U=\kT_D$, $D\in \NN$  and suppose that a relative involutive division $L$
  is defined on $U$. It holds $\forall s,t \in U$, $\exists u\in U$ s.t. $X(s,u)=t$ if and only if $X(s,t)=t$.
\end{Proposition}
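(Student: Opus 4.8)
The plan is to split the biconditional into its two implications; the reverse one is immediate, and the forward one reduces to an exponentwise comparison of the two least common multiples $\mcm(s,u)$ and $\mcm(s,t)$. For the reverse implication, if $X(s,t)=t$ then choosing $u:=t\in U$ produces an element with $X(s,u)=X(s,t)=t$, so such a $u$ exists trivially.

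For the forward implication I would first reformulate involutive divisibility purely in terms of exponents. Recall that $t\mid \mcm(s,t)$ and $t\mid \mcm(s,u)$ always hold, so membership of either lcm in $C_L(t,U)=tL(t,U)$ is equivalent to the corresponding cofactor lying in $L(t,U)$; by the description of $L(t,U)$ this says exactly that every variable $x_i$ whose exponent in the lcm strictly exceeds $\deg_i(t)$ is multiplicative for $t$. Now assume $u\in U$ satisfies $X(s,u)=t$, i.e.\ $\mcm(s,u)\in C_L(t,U)$. Setting $m:=\mcm(s,u)/t\in L(t,U)$, the reformulation yields: for every index $i$ with $\deg_i(\mcm(s,u))>\deg_i(t)$ one has $x_i\in M_L(t,U)$.

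It then remains to show $\mcm(s,t)\in C_L(t,U)$, since uniqueness of the involutive cone (condition 2' of Definition \ref{RelInvDiv}) will force $X(s,t)=t$. Writing $m':=\mcm(s,t)/t$, so that $\deg_i(m')=\max(0,\deg_i(s)-\deg_i(t))$, I must check that each variable dividing $m'$ is multiplicative for $t$. If $\deg_i(m')>0$ then $\deg_i(s)>\deg_i(t)$, and hence $\deg_i(\mcm(s,u))=\max(\deg_i(s),\deg_i(u))\geq\deg_i(s)>\deg_i(t)$; by the previous paragraph $x_i\in M_L(t,U)$. Thus $m'\in L(t,U)$, whence $\mcm(s,t)\in C_L(t,U)$ and $X(s,t)=t$. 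The only substantive point is the trivial monotonicity $\deg_i(\mcm(s,u))\geq\deg_i(s)$: enlarging the second argument of the lcm can only raise exponents, so if $t$ involutively divides the larger term $\mcm(s,u)$ it must already involutively divide $\mcm(s,t)$. I therefore do not expect a genuine obstacle; the only care needed is to express membership in $L(t,U)$ as the exponentwise implication above before comparing the two lcms.
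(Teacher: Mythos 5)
Your proof is correct and is essentially the paper's own argument: both directions match, and your exponentwise comparison of $\mcm(s,t)$ with $\mcm(s,u)$ is just an unwound version of the paper's divisibility argument ($\mcm(s,t)=tm'$ with $m'\mid m\in L(t,U)$, hence $m'\in L(t,U)$ by closure of $L(t,U)$ under taking divisors). One cosmetic slip: $t\mid\mcm(s,u)$ does \emph{not} hold ``always,'' but it does follow from the hypothesis $X(s,u)=t$ (membership in $C_L(t,U)=tL(t,U)$ implies divisibility by $t$), which is the only place you use it, so the argument stands.
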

\begin{proof}
 ``$\Leftarrow$''\\ It is trivial with $u=t$.
 \\
 ``$\Rightarrow$''\\ 
 Let $w=\mcm(s,u)$, then $w=tm$ with $m \in L(t,U)$.\\
 By definition of least common multiple $w=\frac{su}{\gcd(s,u)}$. We denote
 $h'=\gcd(s,u)$; so $h' \mid u$, $u=hh'$ and then $w=s \frac{u}{h'}=sh$.
 From $w=tm=sh$, we can deduce that $s=\frac{tm}{h}$.\\
 We compute now $l:=\mcm(t,s)=\mcm(t,\frac{tm}{h})$; clearly $t\mid l$, 
 but since $s=\frac{tm}{h}\mid tm$, then $l \mid tm$ so $l=tm'\mid tm$ and then $m'\mid m$
 so $m' \in L(t,U)$ and finally $l \in C(t,U).$
\end{proof}
With the above Proposition we can make simpler the verification that a set $M$ is 
compliant, giving an equivalent definition of compliance:
\begin{Definition}\label{Compliant2}
  Let $U=\kT_D$, $D\in \NN$  and suppose that a relative involutive division $L$
  is defined on $U$.\\
A set $M \subseteq U$ is called \emph{compliant} if 
$\forall s,t  \in U$ it holds
$$X(s, t)=t, \,s \in M \Rightarrow t \in M.$$

\end{Definition}

\begin{example}\label{IdealPom}
 Consider now $n=3$, $D=2$ and define on $U=\kT_2$ the following division
 \begin{center}
\begin{tabular}{|c|c|}
\hline
Terms & Multiplicative Variables\\
\hline
$x^2$&  $x,y,z$\\
\hline
$xy$& $y,z$\\
\hline
$y^2$ &$y$\\
\hline
$xz$ & $z$\\
\hline
$yz$ & $y,z$\\
\hline
$z^2$ & $z$\\
\hline
\end{tabular}
\end{center}
If, with the notation of Proposition \ref{IdealeGrafoFree}, $xy\in M$, then
 $\mcm(x^2,xy)=x^2y \in C_L(x^2,U)$ so $x^2\in M$.

 This is the only $\mcm$ which must be computed ad thus the ideal
 $J=(x^2,xy)$ is exactly given by the union $C_L(x^2,U)\cup C_L(xy,U)$.

 Indeed,
 all the multiples of $x^2$ belong to $C_L(x^2,U)$ so
 $\frac{\mcm(x^2,t)}{x^2}=\frac{t}{\mcd(x^2,t)}\in L(x^2,U)$ and
$X(x^2,t)=x^2$ for all $t\in M$; the multiples of $xy$
 involving only $y,z$ belong to $C_L(xy,U)$, so
 $\frac{\mcm(xy,t)}{xy}=\frac{t}{\mcd(xy,t)}\in L(xy,U)$ and $X(xy,t)=xy$ for
each $t\in M\setminus\{x^2\}$;
 moreover all the multiples of $xy$ involving
  $x$ are also multiples of $x^2$ so they belong to $C_L(x^2,U)$.
\end{example}

\begin{example}\label{IdealDeg3}
 Consider $D=n=3$ and define the following relative involutive division:
 \begin{center}
\begin{tabular}{|c|c|}
\hline
Terms & Multiplicative Variables\\
\hline
$x^3$& $x,z$\\
\hline
$x^2y$& $x $\\
\hline
$xy^2$ &$x$\\
\hline
$y^3$ & $x,y$\\
\hline
$x^2z$ & $z$\\
\hline
$xyz$ & $x,y,z$\\
\hline
$y^2z$ &$y$\\
\hline
$xz^2$ & $z$\\
\hline
$yz^2$ & $y$\\
\hline
$z^3$ & $y,z$\\
\hline
\end{tabular}
\end{center}
 With the notation above, suppose $xy^2 \in M$, then:
 \begin{itemize}
  \item  $\mcm(xy^2,y^3)=xy^3 \in C_L(y^3,U) \Rightarrow y^3 \in M$
  \item  $\mcm(xy^2,xyz)=xy^2z \in C_L(xyz,U) \Rightarrow xyz \in M$
  \item  $\mcm(y^3,y^2z)=y^3z \in C_L(y^2z,U) \Rightarrow y^2z \in M$
   \item  $\mcm(yz^2,y^2z)=y^2z^2 \in C_L(yz^2,U) \Rightarrow yz^2 \in M$
   \item  $\mcm(yz^2,z^3)=yz^3 \in C_L(z^3,U) \Rightarrow z^3 \in M$
   \item  $\mcm(xz^2,z^3)=xz^3 \in C_L(xz^2,U) \Rightarrow xz^2 \in M$
   \item  $\mcm(xz^2,x^2z)=x^2z^2 \in C_L(x^2z,U) \Rightarrow x^2z \in M$
   \item  $\mcm(x^3,x^2z)=x^3z \in C_L(x^3,U) \Rightarrow x^3 \in M$
\item  $\mcm(x^3,x^2y)=x^3y \in C_L(x^2y,U) \Rightarrow x^2y \in M$
 \end{itemize}
So, $M=U$ and in this case clearly $J=\bigcup_{l \in M} C_L(l,U)$. 
\end{example}

We focus now on order ideals.


\begin{Definition}\label{revenant}
 Let $U=\kT_D$, $D \in \NN$. Suppose that a relative involutive division
$L$ is defined on $U$.
\\ A set $N \subseteq U$ is called \emph{revenant} if $\forall t,s \in U$,
 $  X(t,s)=t$, $t \in N$ then $s \in N$.
\end{Definition}

\begin{Proposition}\label{EscalierGrafoFree}
Let $U=\kT_D$, $D \in \NN$. Suppose that a relative involutive division
$L$ is defined on $U$ and let $N \subseteq U$. It holds
$$H:=\bigl(\bigcup_{l \in N}C_L(l,U)\bigr)\cup \{v \in \kT\, \vert \, \deg(v) <D\} $$
is an order ideal, if and only if $N$ is revenant.
\end{Proposition}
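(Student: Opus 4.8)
The plan is to dualise Proposition \ref{IdealeGrafoFree} by passing to complements. First I would invoke the elementary fact, recorded in Section \ref{Notation}, that a subset of $\kT$ is an order ideal if and only if its complement is a semigroup ideal; thus $H$ is an order ideal precisely when $\kT \setminus H$ is a semigroup ideal. Since every cone $C_L(l,U)$ lies in $\kT_{\geq D}$ and $H$ swallows all terms of degree $<D$, the complement satisfies $\kT \setminus H = \kT_{\geq D} \setminus \bigcup_{l \in N} C_L(l,U)$. Using condition 1' (the cones cover $\kT_{\geq D}$) together with condition 2' (they are pairwise disjoint) from Definition \ref{RelInvDiv}, the cones partition $\kT_{\geq D}$, so removing those indexed by $N$ leaves exactly those indexed by $M := U \setminus N$:
$$\kT \setminus H = \bigcup_{l \in M} C_L(l,U).$$

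Next I would reduce ``$\kT \setminus H$ is a semigroup ideal'' to compliance of $M$. One direction is immediate from Proposition \ref{IdealeGrafoFree}: if $M$ is compliant then $\bigcup_{l\in M} C_L(l,U) = (M)$ is a semigroup ideal. For the converse, note that $l \in C_L(l,U)$ for each $l$, so $M \subseteq \bigcup_{l\in M} C_L(l,U)$; if the latter is a semigroup ideal it therefore contains $(M)$, while the reverse inclusion $\bigcup_{l\in M} C_L(l,U) \subseteq (M)$ always holds because cone elements are multiples of generators. Equality then forces $M$ to be compliant, again by Proposition \ref{IdealeGrafoFree}. Hence $\kT \setminus H$ is a semigroup ideal if and only if $M = U \setminus N$ is compliant.

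It remains to show that $M = U \setminus N$ is compliant if and only if $N$ is revenant, and here the one subtle point is the symmetry $X(s,t) = X(t,s)$, which holds because $\mcm(s,t) = \mcm(t,s)$ determines the same containing cone. Writing compliance in the form of Definition \ref{Compliant2}, $M$ is compliant iff for all $s,t \in U$, $X(s,t)=t$ and $s \in M$ imply $t \in M$. Taking the contrapositive of the inner implication turns this into: $X(s,t)=t$ and $t \in N$ imply $s \in N$. Renaming the dummy variables and applying $X(s,t)=X(t,s)$ puts this into the exact form of Definition \ref{revenant}, namely $X(t,s)=t$ and $t \in N$ imply $s \in N$. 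The two conditions therefore coincide, completing the chain of equivalences.

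I expect the only genuinely delicate point to be the bookkeeping in this last paragraph, keeping track of the contrapositive and of the swap of arguments in $X$, since everything else follows mechanically from the partition property of the cones and from Proposition \ref{IdealeGrafoFree}.
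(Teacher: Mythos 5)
Your proposal is correct, but it follows a genuinely different route from the paper's own proof. The paper argues both implications directly: for the forward one it applies the order-ideal property of $H$ to $\mcm(t,s)\in C_L(t,U)\subseteq H$ and uses disjointness of the cones to pull $s$ back into $N$; for the converse it verifies divisor-closure of $H$ by hand --- given a divisor $l$ of $w=tu\in C_L(t,U)$ with $t\nmid l$ and $\deg(l)\geq D$, it locates the cone $C_L(m,U)$ containing $l$ and shows by an lcm/divisibility computation that $X(m,t)=t$, so that revenant-ness forces $m\in N$ and hence $l\in H$. You instead dualise through complements: the partition of $\kT_{\geq D}$ into cones (conditions 1' and 2' of Definition \ref{RelInvDiv}) identifies $\kT\setminus H$ with $\bigcup_{l\in M}C_L(l,U)$ for $M=U\setminus N$; your minimality argument correctly upgrades ``this union is a semigroup ideal'' to the equality $(M)=\bigcup_{l\in M}C_L(l,U)$ demanded by Proposition \ref{IdealeGrafoFree}; and what remains is the purely logical equivalence between compliance of $M$ and revenant-ness of $N$, which you rightly ground in the symmetry $X(s,t)=X(t,s)$ plus a contrapositive. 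Your route buys economy and makes the ideal/escalier duality explicit: no lcm computation is repeated, since all of that work is inherited from Propositions \ref{IdealeGrafoFree} and \ref{NoVerif}. Its one cost is a dependency: the passage from compliance to revenant-ness is phrased via Definition \ref{Compliant2}, whose equivalence with Definition \ref{compliant} the paper asserts after Proposition \ref{NoVerif} but does not write out; the equivalence does hold (one direction is the symmetry of $X$, the other is exactly Proposition \ref{NoVerif}), so your chain is sound, but a fully self-contained write-up should include that verification.
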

\begin{proof}
``$\Rightarrow$'' Consider   
  $ t \in N,\, s \in U$, s.t. $ w:=\mcm(t,s) \in C_L(t,U) \subset H$.\\
  Since H is an order ideal and $s|w\in H$ then $s\in H$ whence $s\in N$.
  \\
``$\Leftarrow$''  
Let $w \in H$; we prove that every divisor of $w$ belongs to $H$ as well.
\\
If $\deg(w)\leq D$, its divisors have degree strictly smaller than $D$, so they clearly belong to $H$.\\
If $\deg(w)> D$, we know that $w \in C_L(t,U)$, for some $t \in N$, then we may write $w=tu$, with $u \in L(t,U)$.
Let $l \mid w$; if $t \mid l$, then $l \in C_L(t,U) \subset H$.
Otherwise, if $\deg(l)<D$, then $l \in H$ by definition of $H$, so the only case we have still to examine is the case $t \nmid l$ and
$\deg(l)\geq D$. By definition of relative involutive division, $\exists m\in U$ s.t. $l \in C_L(m,U)$, so $l=mv$, $v \in M(m,U)$. Take now
$w':=\mcm(m,t)=th'$ with $h'\mid u$, since $th'\mid w=tu$. 
 This implies
$\mcm(m,t)=w' \in C_L(t,U)$ and $X(m,t)=t$ so, by hypothesis, $m \in N$, allowing us to conclude that $l \in H$.
\end{proof}
\begin{example}\label{EscalierPom}
 Consider $n=3$, $D=2$ and the division defined in \ref{IdealPom}.
Suppose that $xz \in N$, then $\mcm(xz,z^2)=xz^2 \in C_L(xz,U)$, so $z^2 \in N$
and, for each $s \in U \setminus \{xz,z^2\}$, $\mcm(xz,s)\notin C_L(xz,U)$
 $\mcm(z^2,s)\notin C_L(z^2,U)$, so $N=\{xz,z^2\}$ and 
 $\bigl(\bigcup_{l \in N}C_L(l,U)\bigr)\cup \{v \in \kT\, \vert \, \deg(v) <2\}$ is an order ideal.
\end{example}

\section{Pommaret division and the Ufnarovsky-like graph}\label{P_UL}
In this section, we focus on the particular case of Pommaret division.
We begin by remarking that:
\begin{itemize}
  \item if the Pommaret relative involutive division
$L$ is defined on $\kT_D$, then $\forall u_1,u_2 \in U$, $M_L(u_1,U) \subseteq M_L(u_2,U)$ or
$M_L(u_2,U) \subseteq M_L(u_1,U)$.
  \item a relative involutive division $L$ defined on 
 $U=\kT_D=\{u_1,...,u_l\}$, $D\in \NN$, $l={ D+n-1 \choose n-1}$
coincide with Pommaret division w.r.t. some variable reordering if and only if  
there is a relabelling of the terms in $U$ s.t.   $M(u_1,U) \subseteq M(u_2,U) \subseteq ... \subseteq M(u_l,U)=\{x_1,...,x_n\}$ (see Proposition \ref{PommaretInscat}).
\end{itemize}
\medskip
  
\noindent
The Pommaret case is rather peculiar and  we can construct semigroup ideals/ order ideals in a simple  way. 
To do so, we define the \emph{Ufnarovsky-like graph} $G_U$, i.e. an oriented graph, whose vertices are the elements of 
$U=\kT_D$, $D \in \NN$ and s.t., given $s,t \in U$,
there is an edge $t \xrightarrow{x_j} s$ if and only if $\exists x_j \in NM_L(s,U)$ s.t. 
$sx_j \in C_L(t,U)$.
\begin{example}\label{ULGraph}
 Consider again 
$D=2$, $n=3$, and suppose that  Pommaret division with $ x<y<z$ is defined on $U$.\\
The Ufnarovsky-like graph is:
  \begin{center}
\begin{tikzpicture}[every node/.style={circle, draw, scale=.5, fill=orange!40}, scale=1.0]
\node (2) at (2,0) [] {\footnotesize $x^2$};
\node (3) at (4,0) [] {\footnotesize $xy$};
\node (4) at (6,0) [] {\footnotesize $y^2$};
\node (6) at (4,-2) [] {\footnotesize $xz$};
\node (7) at (6,-2) [] {\footnotesize $yz$};
\node (9) at (6,-4) [] {\footnotesize $z^2$};

 \draw [-open triangle 90](3) -- node[auto, fill=yellow!30] {$y$} (2);
\draw [-open triangle 90](6) -- node[auto, fill=yellow!30] {$z$} (2);
\draw [-open triangle 90](4) -- node[auto, fill=yellow!30] {$y$} (3);
\draw [-open triangle 90](7) -- node[auto, fill=yellow!30] {$z$} (3);
 \draw [-open triangle 90](7) -- node[auto, fill=yellow!30] {$y$} (6);
 \draw [-open triangle 90](9) -- node[auto, fill=yellow!30] {$z$} (6);
  \draw [-open triangle 90](9) -- node[auto, fill=yellow!30] {$z$} (7);
  \draw [-open triangle 90](7) -- node[auto, fill=yellow!30] {$z$} (4);
\end{tikzpicture}
\end{center}

The variables labelling the edges, in the picture represent the non-multiplicative variabiles involved in drawing the Ufnarovsky-like graph; for example 

\begin{center}
\begin{tikzpicture}[every node/.style={circle, draw, scale=.5, fill=orange!40}, scale=1.0]

\node (2) at (2,0) [] {\footnotesize $x^2$};
\node (3) at (4,0) [] {\footnotesize $xy$};
 
 \draw [-open triangle 90](3) -- node[auto, fill=yellow!30] {$y$} (2);

\end{tikzpicture}
\end{center}
means thay $x^2\cdot y \in C_L(xy,U)$.
\end{example}

By means of the Ufnarovsky-like graph, we can construct both ideals and order ideals, as shown in 
what follows.

\begin{Definition}\label{ufcompl}
Let $U=\kT_D$, $D \in \NN$. Suppose that the Pommaret relative involutive division
$L$ is defined on $U$ and let $G_U$ the related Ufnarovsky-like graph.
\begin{itemize}
  \item   A subset $M\subset U$ is 
 \emph{Ufnarovsky-compliant} if
 $\forall s,t \in U$  for which there is an arrow
 $ t \rightarrow s \textrm{ in } G_U, \, s\in M \Rightarrow t \in M.$
  \item  Let $N \subset U$.\\
$N$ is \emph{Ufnarovsky-revenant} if
$\forall t,s \in U$ for which there is an arrow 
 $ t 
\rightarrow s \textrm{ in } G_U,\, t \in N \Rightarrow s \in N.$
\end{itemize}
%
\end{Definition}

\begin{Proposition}\label{PomIdeale}
Let $U=\kT_D$, $D \in \NN$. Suppose that the Pommaret relative involutive division
$L$ is defined on $U$ and let $G_U$ the related Ufnarovsky-like graph.

\begin{itemize}
  \item[a.] Let $M \subset U$ and $J=(M)$. Then
$J=(M)=\bigcup_{l \in M} C_L(l,U) \Leftrightarrow M$ is Ufnarovsky-compliant.
  \item[b.]  Let $N \subset U$. Then
$H:=\bigl(\bigcup_{l \in N}C_L(l,U)\bigr)\cup \{v \in \kT\, \vert \, \deg(v) <D\} \textrm{ is an order ideal}
\Leftrightarrow N$ is Ufnarovsky-revenant.
\end{itemize}
\end{Proposition}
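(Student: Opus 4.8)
The plan is to reduce everything to the two characterizations already proved and then to compare the $\mcm$-based relation with reachability in $G_U$. By Proposition \ref{IdealeGrafoFree} one has $J=\bigcup_{l\in M}C_L(l,U)$ if and only if $M$ is compliant, and by Proposition \ref{EscalierGrafoFree} the set $H$ is an order ideal if and only if $N$ is revenant. Moreover, by Proposition \ref{NoVerif} and Definition \ref{Compliant2}, compliance of $M$ means precisely that $X(s,t)=t$ and $s\in M$ force $t\in M$, while revenance of $N$ means that $X(t,s)=t$ and $t\in N$ force $s\in N$. Hence it suffices to prove, for Pommaret division, that \emph{compliant} $\Leftrightarrow$ \emph{Ufnarovsky-compliant} and \emph{revenant} $\Leftrightarrow$ \emph{Ufnarovsky-revenant}. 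Since $X$ is symmetric in its arguments ($\mcm(s,t)=\mcm(t,s)$), both equivalences amount to comparing the single relation ``$X(s,t)=t$'' with the existence of a directed path from $t$ to $s$ in $G_U$.

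First I would settle the easy inclusion. If $t\xrightarrow{x_j}s$ is an edge of $G_U$, then $x_j\in NM_L(s,U)$ and $sx_j\in C_L(t,U)$, and a direct exponent comparison shows $\mcm(s,t)=sx_j$, so that $X(s,t)=t$ with $\mcm(s,t)$ of degree exactly $D+1$. Thus every edge of $G_U$ records an instance of the relation $X(s,t)=t$. A compliant $M$, being closed under the propagation for \emph{every} pair with $X(s,t)=t$, is in particular closed under the propagation coming from edges, hence Ufnarovsky-compliant; symmetrically a revenant $N$ is Ufnarovsky-revenant. This gives one implication in each equivalence with no further work.

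The substance of the proof is the converse, for which I would establish the following Key Lemma: for Pommaret division on $U=\kT_D$, if $X(s,t)=t$ then there is a directed path $t\to\cdots\to s$ in $G_U$. I would argue by induction on $\deg(\mcm(s,t))$, the base case $\deg(\mcm(s,t))=D$ forcing $s=t$ (empty path). For the inductive step write $x_a=\min(t)$; since $\mcm(s,t)/t\in L(t,U)$ is supported on variables $\le x_a$, every ``$s$-excess'' variable $x_i$ (i.e. $\deg_i(s)>\deg_i(t)$) satisfies $i\le a$, while every ``$t$-excess'' variable $x_j$ (i.e. $\deg_j(t)>\deg_j(s)$) satisfies $j\ge a$, because $\deg_i(t)=0$ for $i<a$. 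I would then pick $x_c$ to be the \emph{largest} $s$-excess variable and $x_j$ any $t$-excess variable (one exists since $\deg(s)=\deg(t)=D$); these satisfy $x_j>x_c$ and $x_c\in M_L(t,U)$. Setting $t':=t\,x_c/x_j$, one checks that $t'x_j=tx_c\in C_L(t,U)$ and $x_j>\min(t')=x_c$, so $t\xrightarrow{x_j}t'$ is an edge of $G_U$; a short exponent bookkeeping then gives both $X(s,t')=t'$ and $\deg(\mcm(s,t'))=\deg(\mcm(s,t))-1$, so the induction hypothesis yields a path $t'\to\cdots\to s$, which prepended with the edge $t\to t'$ proves the claim.

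Finally I would assemble the converse implications. If $M$ is Ufnarovsky-compliant and $X(s,t)=t$ with $s\in M$, the Key Lemma gives a path $t\to\cdots\to s$; reading the defining implication of Ufnarovsky-compliance backwards along this path propagates membership from $s$ back to $t$, so $t\in M$ and $M$ is compliant, proving a. Symmetrically, if $N$ is Ufnarovsky-revenant and $X(t,s)=t$ with $t\in N$, the same path read forward propagates membership from $t$ to $s$, so $N$ is revenant, proving b. The main obstacle is exactly the inductive construction in the Key Lemma: the intermediate term $t'$ must be chosen so that the step $t\to t'$ is a genuine edge of $G_U$ \emph{and} the relation $X(s,t')=t'$ survives. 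The delicate point is that lowering $\min(t)$ when $c<a$ could expose an $s$-excess variable lying above the new minimum $\min(t')=x_c$; selecting $x_c$ maximal among the $s$-excess variables is precisely what guarantees that all $s$-excess variables of the pair $(s,t')$ remain $\le x_c$, keeping $X(s,t')=t'$ intact.
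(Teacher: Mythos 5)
Your proposal is correct and is essentially the paper's own argument: both reduce parts a and b to the $\mcm$-characterizations of Propositions \ref{IdealeGrafoFree} and \ref{EscalierGrafoFree} (via Proposition \ref{NoVerif}), and both settle the nontrivial direction by proving exactly your Key Lemma --- that $X(s,t)=t$ forces a directed path from $t$ to $s$ in $G_U$ --- through a chain of single-variable swaps whose legitimacy as edges is certified by the Pommaret $\min$-condition. The only divergence is in execution: the paper builds the chain from the $s$-end, pairing the ascending-sorted variables of $s/\mcd(s,t)$ with those of $t/\mcd(s,t)$, whereas you build it from the $t$-end by inserting the largest $s$-excess variable and inducting on $\deg(\mcm(s,t))$; this is a cosmetic difference, and your maximality choice plays the same role as the paper's smallest-first ordering in keeping the minimal variable under control.
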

\begin{proof}
  \begin{itemize}
    \item[a.] ``$\Rightarrow$''  \\ Consider $s \in M,\, t \in U$ s.t. there is an arrow $t \rightarrow s$; then
 $\exists x_j \in NM_L(s,U)$ s.t. 
$sx_j \in C_L(t,U)$.
\\
If,  by contradiction, $t \notin M$, being $x_j s \in J= 
\bigcup_{l \in M} C_L(l,U)$, $\exists w \in M$ (and so $w \neq t$) s.t. 
$x_j s \in C_L(w,U)$ and this contradicts condition 
$2'.$ of the definition of relative involutive division.\\
%
``$\Leftarrow$''
 Let $t \in U$, $s\in M$ and suppose $w=\mcm(s,t)\in C_L(t,U)$, so 
 $$w=\frac{st}{l}=th, \, l=\mcd(s,t),\, h=\frac{s}{l} \in L(t,U). $$
We have to prove that $t \in M$, by means of 
Ufnarovsky-compliance, i.e. by showing that there is
a path from $t$ to $s$ in $G_U$.\\ 
 Notice that $\deg(s)=\deg(t)$, so 
 $\deg(h)=\deg(\frac{s}{l})=\deg(\frac{t}{l})=:d$.
 We can write $h=\frac{s}{l}=y_1\cdots y_d$ and
 $\frac{t}{l}=z_1\cdots z_d$, with
 $y_i,z_i \in \{x_1,...,x_n\}$, $1 \leq i \leq d$,
 $y_1\leq ... \leq  y_d$, $z_1\leq ... \leq  z_d$.
 \\
 Since $ h \in L(t,U) $, we have $y_d=\max(h)\leq \min(t)$ and since $l \mid t$
  $y_d \leq \min(l)$. 
 \\
 Now take $s_1:=\frac{s z_1}{y_1} \in U$. 
 It holds 
$y_1\leq \min(t)\leq z_1$. If $y_1=z_1$, 
then $y_1\mid h=\frac{s}{l}$
and $y_1\mid \frac{t}{l}$ and this is impossible since
$\mcd(\frac{s}{l},\frac{t}{l})=1$
by definition of $l$. 
So we can conclude that $y_1< z_1$. We prove now that 
$y_1= \min(s_1).$\\

In principle, $\min(s)\leq y_1$; but if $\min(s)< y_1
=\min(h)$ then $\min(s) \mid l$, $l \mid t$ 
so $\min(s)\mid t$ and $\min(s) \nmid h$,
being smaller than the minimal variable appearing in 
$h$ with nonzero exponent.

Then $\min(t) \leq \min(s) <y_1\leq y_d$ and 
this contradicts $h \in L(t,U),$
so $\min(s)=y_1$. 
Since, moreover,  $y_1< z_1$, 
then $y_1\leq \min(s_1)$, whence $y_1 \in M(s_1,U).$
From condition 2' of Definition \ref{DefRID} and 
observing that $s_1 y_1=sz_1$ we can desume that 
$s z_1\in C_L(s_1,U)$
\footnote{This could be seen also directly, 
since we have $\min(s)=y_1<z_1$} and 
this implies that have an arrow $s_1 \rightarrow s$ in
$G_U$ and so $s_1 \in M$.
\\
For $2 \leq j \leq d$, we define 
$$s_j:=\frac{s_{j-1} z_j}{y_j},$$
 with $z_j:=\min\Big(\frac{t}{l \prod_{i=1}^{j-1}z_i}\Big)=\min(z_j\cdots z_d)$
 and $y_j:=\min\Big(\frac{h}{\prod_{i=1}^{j-1}y_i}\Big)=\min(y_j\cdots y_d).$ 
\\ It holds $y_j\leq \max(h) \leq \min(t) \leq z_j$. \\
If $z_j=y_j$ then $y_j \mid \frac{t}{l},h$ but this 
is impossible since $\mcd\Big(\frac{t}{l},h\Big)=1$,
by definition of $l$,
so we have $z_j>y_j$.
\\
Now we prove that $y_j \leq \min(s_j)$. 
We know that $\prod_{i=1}^{j-1}y_i \mid s_{j-1},$ 
so $\min(s_{j-1})\leq y_j$.
If, by contradiction, $\min(s_{j-1})< y_j$, 
then $\min(s_{j-1})\nmid h$ and $\min(s_{j-1})\mid l$ 
and $l \mid t$, whence
$\min(t) \leq \min(s_{j-1}) < y_j \leq \max(h)$, 
which is impossible, so $\min(s_{j-1})= y_j< z_j$. 
Then $y_j\leq \min(s_j)$ and so $y_j\in M_L(s_j,U)$; 
$z_j>\min(s_{j-1})$ so $z_j\in NM_L(s_{j-1},U)$.
\\
By $s_jy_j=s_{j-1}z_j$, we get that there is an arrow
$s_j \rightarrow s_{j-1}$ in $G_U$, and so $s_j \in M$.\\
Finally, we observe that if $j=d$ we have $s_j=s_d=t$, 
so we can conclude. 
Indeed, we have found a path from $t\in U$ and $s \in M$,
so we can deduce that also $t \in M$.
    \item[b.]  ``$\Rightarrow $''\\
Consider $ t \in N,\, s \in U $ s.t. there is an arrow  $ t 
\rightarrow s \textrm{ in } G_U$; there is a variable $x_j \in NM_L(s,U)$ s.t. $sx_j =tv$, $v \in L(t,U)$.
In particular $\deg(sx_j)=D+1=\deg(tv)$ and $\deg(t)=D$, so\footnote{It is clear that $x_j\neq x_i$, since otherwise
$s=t$ and $NM_L(s,U)=NM_L(t,U) \ni x_j=x_i\in M_L(t,U)$} $v=x_i \neq x_j$. We may write
$sx_j=tx_i$ and so $x_j \mid t$, so $\mcm(s,t)=\mcm(\frac{tx_i}{x_j}, t)=tx_i\in C_L(t,U)$.
By Proposition \ref{EscalierGrafoFree} we can conclude that $s \in N$.
 \\
``$\Leftarrow$'' The construction is exactly the same as for part a. The only difference is that now $t \in N$ and $s \in U$, so, once constructed a path from $t$ to $s$, we can conclude that each $s_j$ in the path belongs to $N$ so $s \in N$.
  \end{itemize}

  \end{proof}

\begin{example}
 Referring to the graph of example \ref{ULGraph}, we see that if we  
 want to construct an ideal $J=(M)$ and we suppose $xz \in M$, then, following the graph, $yz,z^2 \in M$ and
 actually $J=(z^2,yz,xz)=C_L(z^2,U)\cup C_L(yz,U)\cup C_L(xz,U)$.
 On the other hands, if we want to construct an order ideal and we suppose $xz\in N$ then $x^2 \in N$ and
 $H=C_L(xz,U)\cup C_L(x^2,U)\cup \{v \in \kT\, \vert \, \deg(v) <2\} $
is actually an order ideal. 
\end{example}

It may seem that the Ufnarovsky-like graph may be enough to treat the general solution.
Indeed, in some small examples, as the following Example 
 \ref{GrafoStrano}, the Ufnarovsky-like graph solves the problem even if the relative involutive division $L$ defined is \emph{not} Pommaret division.

 \begin{example}\label{GrafoStrano}
Consider $D=2$ and $n=3$. There are exactly $6$ terms in $\kT_2$, i.e. $\mathcal{T}_2=\{x^2,xy,y^2,xz,yz,z^2\}$.
We take the relative involutive division given by
\begin{center}
\begin{tabular}{|c|c|}
\hline
Terms & Multiplicative Variables\\
\hline
$x^2$& $x$\\
\hline
$xy$& $x,y,z$\\
\hline
$y^2$ &$y$\\
\hline
$xz$ & $x,z$\\
\hline
$yz$ & $y$\\
\hline
$z^2$ & $y,z$\\
\hline
\end{tabular}
\end{center}
The corresponding Ufnarovsky-like graph is
\begin{center}
\tikz[every node/.style={circle, draw, scale=.5, fill=orange!40}, scale=1.0]
{
\node (d) at (2,0) [] {\footnotesize $x^2$};
\node (a) at (4,0) [] {\footnotesize $xy$};
\node (e) at (6,0) [] {\footnotesize $y^2$};
\node (c) at (4,-2) [] {\footnotesize $xz$};
\node (f) at (6,-2) [] {\footnotesize $yz$};
\node (b) at (6,-4) [] {\footnotesize $z^2$};
\draw [-open triangle 90](a) -- node[auto, fill=yellow!30] {$y$} (c);
\draw [-open triangle 90](a)-- node[auto, fill=yellow!30] {$y$} (d);
\draw [-open triangle 90](a) -- node[auto, fill=yellow!30] {$x$} (e);
\draw [-open triangle 90](a) -- node[auto, fill=yellow!30] {$x$}(f);
\draw [-open triangle 90](b) -- node[auto, fill=yellow!30]  {$z$}(f);
\draw [-open triangle 90](c) -- node[auto, fill=yellow!30] {$z$}(d);
\draw [-open triangle 90](c) -- node[auto, fill=yellow!30] {$x$}(b);
\draw [-open triangle 90](f) -- node[auto, fill=yellow!30] {$z$}(e);
}
  \end{center}
  Suppose we want to construct a monomial ideal $J=(M)$ s.t. $xz \in M$.
  Then, all multiples of $xz$ must belong to $J$. For those arising by multiplying 
  by  $x,z$ there is nothing to do, since these two variables are multiplicative
  for $xz$. For those involving $y$, we may add $xy$ to $M$ (indeed there is an arrow
  $xy \rightarrow xz$) and we can conclude since $M_L(xy,U)=\{x,y,z\}$.
  
 \end{example}
 \begin{Remark}\label{Supernota}
  In the paper \cite{PR}, Pleksen and Robertz employ the theory of Janet bases to compute resolutions of  \emph{ finitely generated modules over polynomial rings over fields and over rings of linear differential operators with coefficients in a differential field}.
Given the polynomial ring $R:=k[x_1,...,x_n]$ over a field $k$, they study the  free module $R^q$. First, they consider a multiple-closed set $S$ (i.e. a set of terms in $R^q$ which is closed by the multiplication for terms in $R$) and they take a Janet basis $J(S)$.
Then, they introduce as a tool the so-called Janet graph of $J(S)$.
The vertex set of the graph is given by $J(S)$ itself. For each $v \in J(S)$ and each $x_j$ that is non-multiplicative for $v$, let $w \in J(S)$ the unique involutive divisor of $vx_j$. Then, there is an edge $v\xrightarrow{x_j} w$.
\\
\smallskip

The paper \cite{S} is dedicated to \emph{structural properties of involutive divisions}. In particular, it extensively deals with Pommaret bases and its syzygy theory.
\\
In connection with the construction of a Groebner basis for the syzygy module of an involutive basis $H$, looking for a way to get an
involutive basis for the  module, the author applies Janet-Scheryer  theorem \cite{J1, Sch1}, which requires a suitable  ordering of the elements in $H$.\\
For producing it, the author generalizes the graph constructed by Pleksen and Robertz \cite{PR} to an arbitrary involutive division $L$, defining the $L$-graph.
This graph is defined exactly in the same way as the Janet graph, with vertex set $\cT(H)$, only the involutive division is different.
\\
From the $L$-graph, he defines the $L$-ordering on $H$, setting $h_a<h_b$ if there is a path from $\cT(h_a)$ to $\cT(h_b)$ in the graph.
\\
\medskip

In the paper \cite{HSS}, the authors study stable ideals, showing that they share many properties with the generic initial ideal.\\
Moreover, they relate Pommaret bases to some invariants associated with local cohomology and exhibit the existence of linear quotients in Pommaret bases. 
In such context, they take the Pommaret basis $H$ of a monomial ideal and they associate  Seiler's $L$-graph with $L$ given by Pommaret division.
This graph is again used to produce an ordering on $H$ (reversing that of \cite{S}).
\\
\smallskip

Finally, in \cite{AFS}, in the context of computing resolutions and Betti numbers, the authors again employ Janet-Schreyer theorem and the $J$-graph, i.e. essentially Pleksen-Robertz's Janet graph, with no significant modifications.
\\
\medskip

All these graphs (that are specifications of the most general version, i.e. that defined in \cite{S}) are very close to our Ufnarovsky-like graph. In particular, an $L$-graph is our graph with reversed arrows (and without the mark of the involved non-multiplicative variable, that we have put on each edge and that is only present in \cite{PR}). Moreover, we construct it on the set of  \emph{all} terms in some degree $D$ (so including the ideal and the escalier in the same graph), whereas they use the generators of a semigroup ideal. 

Our theory is restricted to the case of all monomials of the set $\kT_D\subset R=\ck[x_1,...,x_n]$, which have a fixed degree  $D$; in order to cover the theory of \cite{PR, S, HSS, AFS},  which  consider an involutive basis  $H=\{f_1,...,f_m\}\subset R^q$ and restrict to the leading terms, lying in $\kT_{\geq D}^{(m)}=\{te_i ,\, \deg(t)+\deg(f_i )\geq D\}$, we should extend our theory from $\kT_D$ to the $\kT_{\geq D}^{(m)}$.
 \end{Remark}

In the next section, we will see whether Ufnarovsky-like
 can cover the general case or not.
 
 \section{Relative involutive divisions, involutiveness and generic initial ideal }\label{Gin}
Riquier \cite{Riq}\footnote{This historical remark is deeply depending on \cite[IV.55]{SPES}} gave non only S-polynomial-like relations which must be satisfied by principal differential equation systems in order to have solutions but also described the initial conditions as series with the shape
$$\sum_{l\in N} l \sum{t\in C_L(l,U)} c_{l,t} t.$$

Delassus \cite{Del1,Del2,Del3} criticized Riquier for not having realized that his result was giving, up  to a generic change of coordinates, a \emph{canonical form} of the solutions. The (wrong) intuition by Delassus was the notion of \emph{generic initial ideal}. He considered\footnote{The notation used by these Hilbert's followers  is not obvious, the more so since Janet systematically reversed the notion of the other researchers; we present here Delassus claim using the present standard notations as described in \cite{GS}} the (degree)-lexicographical ordering $>$ induced by $x_1>\ldots>x_n$ and claimed that, given
$l$ independent forms $G:=\{g_1,\dots,g_l\}$ in $n$ variables of degree $p$, denoting $I:={\Bbb I}(G)$ the ideal generated by $G$ and ${\sf T}(G) = {\sf T}(I)$ its \emph{initial ideal},  there is a monomial ideal $gin(I)$ which not only satisfies $gin(I)={\sf T}(g(I))$ for all generic change of coordinate $g$\footnote{\emph{id est} for all $g\in U$, $U$ a Zariski open subset $U\in GL(n)$; of course that time was missing the notion  of Zariski sets, but the informal intuition was clear to researchers.} but even that it consisted of the first $l$ $>$-maximal monomials in ${\Cal T}_p$.

As it is well know, and as it was independently discovered by Gunther\cite{Gun1,Gun2,Gun3}
and Robinson \cite{Rob1,Rob2}, the result is at the same time false and not complete, but it
holds for all Borel-fixed monomial ideals.

Janet \cite{J2,J3,J4} in order to apply Cartan's test and condition applied himself a (Zariski open) generic change of coordinate obtaining the Pommaret involution and the related \emph{canonical form} which he called \emph{involutive}.

He stated that each non-trivial\footnote{if we consider the (trivially) Borel-fixed monomial 
ideal $U={\Cal T}_D$ of course the statement is trivially counterexampled by \emph{any} 
non-Pommaret relative involutive division.} Borel-fixed monomial
ideal is involutive \emph{id est} with our terminology its relative involutive division is Pommaret.

\begin{example}\label{JanetEx}\cite[p.31]{J3}
 Consider    $n=3$ and $D=2$, supposing $x<y<z$
and define Pommaret relative involutive division, as follows:
\begin{center}
\begin{tabular}{|c|c|}
\hline
Terms & Multiplicative Variables\\
\hline
$x^2$& $x$\\
\hline
$xy$& $x$\\
\hline
$y^2$ &$x,y$\\
\hline
$xz$ & $x$\\
\hline
$yz$ & $x,y$\\
\hline
$z^2$ & $x,y,z$\\
\hline
\end{tabular}
\end{center}
The associated Ufnarovsky-like graph is
\begin{center}
\tikz[every node/.style={circle, draw, scale=.5, fill=orange!40}, scale=1.0]
{
\node (d) at (2,0) [] {\footnotesize $x^2$};
\node (a) at (4,0) [] {\footnotesize $xy$};
\node (e) at (6,0) [] {\footnotesize $y^2$};
\node (c) at (4,-2) [] {\footnotesize $xz$};
\node (f) at (6,-2) [] {\footnotesize $yz$};
\node (b) at (6,-4) [] {\footnotesize $z^2$};
\draw [-open triangle 90](a) -- node[auto, fill=yellow!30] {$y$} (d);
\draw [-open triangle 90](c) -- node[auto, fill=yellow!30] {$z$} (d);
\draw [-open triangle 90](e) -- node[auto, fill=yellow!30] {$y$} (a);
\draw [-open triangle 90](f) -- node[auto, fill=yellow!30] {$z$} (a);
\draw [-open triangle 90](f) -- node[auto, fill=yellow!30] {$z$} (e);
\draw [-open triangle 90](f) -- node[auto, fill=yellow!30] {$y$} (c);
\draw [-open triangle 90](b) -- node[auto, fill=yellow!30] {$z$} (c);
\draw [-open triangle 90](b) -- node[auto, fill=yellow!30] {$z$} (f);
}
  \end{center}
Suppose we want to construct an ideal and that $xy\in M$; looking at the above graph we see that this implies that also 
both  $y^2$ and  $yz $ must belong to $M$. Then also    $z^2\in M$ and we can conclude.
We have then $J = (xy,y^2,yz,z^2)$. Even it has been constructed using Pommaret division,
$J$ is not Borel-fixed \footnote{Indeed $xy \in J$ but $xz\notin J$.}, so it cannot be a gin.
 \end{example}

\section{A graph for the general solution}\label{GUL}

In section \ref{P_UL}, we have shown how to construct semigroup ideals and the corresponding escaliers in a consistent way w.r.t. the decomposition in cones induced by  Pommaret relative involutive divisions.
We do so by \emph{walking} in a simple graph, the Ufnarovsky-like graph.
\\
In this section, we will see that the Ufnarovsky-like graph cannot be used to cover the general case and we
will see how to generalize it, in order to cover all the possibile relative involutive divisions.
\\
Let us see an example
\begin{example}\label{Tr}
Take $D=3$ and $n=4$ so that we get the $20$ terms in
 $$\mathcal{T}_{3}=\{x^3,x^2y,xy^2,y^3,x^2z,xz^2,z^3,x^2t,xt^2,t^3,xyt,xzt,xyz, 
 yz^2,yt^2,y^2z,y^2t,z^2t,yzt,zt^2\}.$$
Let us then define the relative involutive division $L$, by specifying the 
multiplicative variables for each term:
\begin{center}
\begin{tabular}{|c|c|}
\hline
Terms & Multiplicative variables\\
\hline
$x^3$& $x$\\
\hline
$x^2y$& $x,y,t$\\
\hline
$xy^2$ &$y,z$\\
\hline
$y^3$ & $y,z$\\
\hline
$x^2z$ & $x,y,z$\\
\hline
$xz^2$ & $z$\\
\hline
$z^3$ & $z$\\
\hline
$x^2t$ & $x,z,t$\\
\hline
$xt^2$ & $y,t$\\
\hline
$t^3$ & $t$\\
\hline
$xyt$ & $y$\\
\hline
$xzt$ & $z,t$\\
\hline
$xyz$ & $z$\\
\hline
$yz^2$ & $z$\\
\hline
$yt^2$ & $t$\\
\hline
$y^2z$ & $z$\\
\hline
$y^2t$ & $y,t$\\
\hline
$z^2t$ & $z,t$\\
\hline
$yzt$ & $x,y, z,t$\\
\hline
$zt^2$ & $t$\\
\hline
\end{tabular}
\end{center}
and construct the corresponding Ufnarovsky-like graph:
\begin{center}
\tikz[every node/.style={circle, draw, scale=.5, fill=orange!40}, scale=1.0]
{
\node (a1) at (-3,9) [] {\footnotesize $x^3$};
\node (a2) at (-3,7) [] {\footnotesize $x^2y$};
\node (a5) at (-1,7) [] {\footnotesize $x^2z$};
\node (a8) at (0,5) [] {\footnotesize $x^2t$};
\node (a3) at (-3,2) [] {\footnotesize $xy^2$};
\node (a13) at (-1,2) [] {\footnotesize $xyz$};
\node (a6) at (1,2) [] {\footnotesize $xz^2$};
\node (a11) at (0,0) [] {\footnotesize $xyt$};
\node (a12) at (2,0) [] {\footnotesize $xzt$};
\node (a9) at (3,-2) [] {\footnotesize $xt^2$};
\node (a4) at (-3,-5) [] {\footnotesize $y^3$};
\node (a16) at (-1,-5) [] {\footnotesize $y^2z$};
\node (a14) at (1,-5) [] {\footnotesize $yz^2$};
\node (a7) at (3,-5) [] {\footnotesize $z^3$};
\node (a17) at (0,-7) [] {\footnotesize $y^2t$};
\node (a19) at (2,-7) [] {\footnotesize $yzt$};
\node (a18) at (4,-7) [] {\footnotesize $z^2t$};
\node (a15) at (3,-9) [] {\footnotesize $yt^2$};
\node (a20) at (5,-9) [] {\footnotesize $zt^2$};
\node (a10) at (6,-11) [] {\footnotesize $t^3$};
\draw [-open triangle 90](a2) -- node[auto, fill=yellow!30] {$y$} (a1);
\draw [-open triangle 90](a5) -- node[auto, fill=yellow!30] {$z$} (a1);
\draw [-open triangle 90](a8) -- node[auto, fill=yellow!30] {$t$} (a1);
\draw [-open triangle 90](a5)-- node[auto, fill=yellow!30] {$z$} (a2);
\draw [-open triangle 90](a2)-- node[auto, fill=yellow!30] {$x$} (a3);
\draw [-open triangle 90](a11)-- node[auto, fill=yellow!30] {$t$} (a3);
\draw [-open triangle 90](a3)-- node[auto, fill=yellow!30] {$x$} (a4);
\draw [-open triangle 90](a17)-- node[auto, fill=yellow!30] {$t$} (a4);
\draw [-open triangle 90](a8)-- node[auto, fill=yellow!30] {$t$} (a5);
\draw [-open triangle 90](a5)-- node[auto, fill=yellow!30] {$x$} (a6);
\draw [-open triangle 90](a13)-- node[auto, fill=yellow!30] {$y$} (a6);
\draw [-open triangle 90](a12)-- node[auto, fill=yellow!30] {$t$} (a6);
\draw [-open triangle 90](a6)-- node[auto, fill=yellow!30] {$x$} (a7);
\draw [-open triangle 90](a14)-- node[auto, fill=yellow!30] {$y$} (a7);
\draw [-open triangle 90](a18)-- node[auto, fill=yellow!30] {$t$} (a7);
\draw [-open triangle 90](a2)-- node[auto, fill=yellow!30] {$y$} (a8);
\draw [-open triangle 90](a8)-- node[auto, fill=yellow!30] {$x$} (a9);
\draw [-open triangle 90](a12)-- node[auto, fill=yellow!30] {$z$} (a9);
\draw [-open triangle 90](a9)-- node[auto, fill=yellow!30] {$x$} (a10);
\draw [-open triangle 90](a15)-- node[auto, fill=yellow!30] {$y$} (a10);
\draw [-open triangle 90](a20)-- node[auto, fill=yellow!30] {$z$} (a10);
\draw [-open triangle 90](a2)-- node[auto, fill=yellow!30] {$x$} (a11);
\draw [-open triangle 90](a19)-- node[auto, fill=yellow!30] {$z$} (a11);
\draw [-open triangle 90](a9)-- node[auto, fill=yellow!30] {$t$} (a11);
\draw [-open triangle 90](a8)-- node[auto, fill=yellow!30] {$x$} (a12);
\draw [-open triangle 90](a19)-- node[auto, fill=yellow!30] {$y$} (a12);
\draw [-open triangle 90](a5)-- node[auto, fill=yellow!30] {$x$} (a13);
\draw [-open triangle 90](a3)-- node[auto, fill=yellow!30] {$y$} (a13);
\draw [-open triangle 90](a19)-- node[auto, fill=yellow!30] {$t$} (a13);
\draw [-open triangle 90](a13)-- node[auto, fill=yellow!30] {$x$} (a14);
\draw [-open triangle 90](a16)-- node[auto, fill=yellow!30] {$y$} (a14);
\draw [-open triangle 90](a19)-- node[auto, fill=yellow!30] {$t$} (a14);
\draw [-open triangle 90](a13)-- node[auto, fill=yellow!30] {$x$} (a15);
\draw [-open triangle 90](a17)-- node[auto, fill=yellow!30] {$y$} (a15);
\draw [-open triangle 90](a19)-- node[auto, fill=yellow!30] {$z$} (a15);
\draw [-open triangle 90](a3)-- node[auto, fill=yellow!30] {$x$} (a16);
\draw [-open triangle 90](a4)-- node[auto, fill=yellow!30] {$y$} (a16);
\draw [-open triangle 90](a19)-- node[auto, fill=yellow!30] {$t$} (a16);
\draw [-open triangle 90](a3)-- node[auto, fill=yellow!30] {$x$} (a17);
\draw [-open triangle 90](a19)-- node[auto, fill=yellow!30] {$z$} (a17);
\draw [-open triangle 90](a12)-- node[auto, fill=yellow!30] {$x$} (a18);
\draw [-open triangle 90](a19)-- node[auto, fill=yellow!30] {$y$} (a18);
\draw [-open triangle 90](a12)-- node[auto, fill=yellow!30] {$x$} (a20);
\draw [-open triangle 90](a19)-- node[auto, fill=yellow!30] {$y$} (a20);
\draw [-open triangle 90](a18)-- node[auto, fill=yellow!30] {$z$} (a20);
}
  \end{center}
This graph is rather complicated, 
%
but we can isolate a part, to look more deeply 
into that one:
\begin{center}
\tikz[every node/.style={circle, draw, scale=.5, fill=orange!40}, scale=1.0]
{
\node (a2) at (2,-2) [circle] {$x^2y$};
\node (a5) at (-2,0.5) [circle] {$x^2z $};
\node (a8) at (1,0.6) [circle] {$x^2t$};
\draw [-open triangle 90](a5)-- node[auto, fill=yellow!30] {$z$} (a2);
\draw [-open triangle 90](a8)-- node[auto, fill=yellow!30] {$t$} (a5);
\draw [-open triangle 90](a2)-- node[auto, fill=yellow!30] {$y$} (a8);
}x
  \end{center}
  As usually, we denote $J=(M)$ a monomial ideal. If we want $x^2y \in M$, then we need $x^2z \in M$ since $(x^2y)z \in C(x^2z,U)$.
Now, if $x^2z \in M$, we must have $x^2t \in M$ and, for having $x^2t \in M$
we only need $x^2y\in M$, so, since each term has only the non-multiplicative variable we are using,  
it may seem that we can build an ideal considering only that three 
terms. This is actually impossible, since, for example $x^2yzt \notin C(x^2t) \cup C(x^2z) \cup C(x^2t)$  and, in any case we know that 
(see Remark \ref{cononecompliant}) $yzt\in M$.
\end{example}
\begin{Remark}\label{RmkApel}
 The paper \cite{A} provides a new approach to the theory of involutive divisions.
The given definition of involutive division is rather more general than that by Gerdt and Blinkov \cite{GB1, GB2}, but then the author restricts to  a smaller class  of involutive divisions, that he calls \emph{admissible}, giving effective criteria (somehow similar to our criteria for relative involutive divisions) for a division to be admissible.\\
Notice that an admissible division is not necessarily a relative involutive division, since (as for \cite{GB1, GB2}) the condition $\cT(U)=\bigcup_{u \in U} uL(u,U)$ is not required.
\\
The problem of constructing an involutive basis for an ideal\footnote{The paper is not involving also Groebner escalier's decomposition.}  is then tackledand the underlying idea is that for admissible involutive divisions it is enough to walk backwards in Ufnarovsky-like graph (but Apel does not introduce any graph as a tool).\\
It can be easily shown that the relative involutive division of example \ref{Tr} is not admissible. 
\end{Remark}

In order to cover the general case, we define a new oriented graph $G$, such that
 its nodes are the elements of $U$ and, given $t,s \in U$, there is an edge from $t$ to $s$
 if the following
  two conditions are verified:
  \begin{enumerate}
   \item there are no oriented paths from $t$ to $s$;
   \item  $\mcm(s,t) \in C_L(t,U)$
  \end{enumerate}
  
We call such a graph a \emph{generalized Ufnarovsky-like graph}.
 
  \begin{Definition}\label{gcompl}

    Let $U=\kT_D$, $D\in \NN$  and suppose that a relative involutive division $L$
  is defined on $U$.  Let $G$ be the generalized Ufnarovsky-like graph.

  \begin{enumerate}
   \item   A subset  $M\subseteq U$
  is \emph{G-compliant} if $ \forall t,s \in U$  for which in the oriented graph $G$ there is a
  path from $t$ to $s$, $s \in M$
$ \Rightarrow t \in M. $
   \item   A subset  $N\subseteq U$.
 is \emph{$G$-revenant} if   $\forall t,s \in U,$ for which  in the oriented graph $G$ there is a path from
$t$ to $s$, $ t \in N  \Rightarrow s \in N$. 

  \end{enumerate}
\end{Definition}

\begin{Proposition}\label{GrId}
   Let $U=\kT_D$, $D\in \NN$  and suppose that a relative involutive division $L$
  is defined on $U$.  Let $G$ be the generalized Ufnarovsky-like graph..
  
  \begin{itemize}
   \item[a.]   Let $M\subseteq U$, and $J=(M)$.
  $J=\bigcup_{l \in M} C_L(l,U) \Longleftrightarrow M $ is G-compliant.
  
   \item[b.] Let $N\subseteq U$, then 
 $H:=\bigl(\bigcup_{l \in N}C_L(l,U)\bigr)\cup \{v \in \kT\, \vert \, \deg(v) <D\} $
is an order ideal, if and only if   
$N$ is $G$-revenant.
  \end{itemize} 
  \end{Proposition}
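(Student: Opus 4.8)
The plan is to reduce both equivalences to the already proved Propositions~\ref{IdealeGrafoFree} and \ref{EscalierGrafoFree}, by showing that $G$-compliance is the same property as compliance in the sense of Definition~\ref{Compliant2} and that $G$-revenance is the same as revenance in the sense of Definition~\ref{revenant}. The bridge is a single observation: the defining condition (2) of an edge $t\rightarrow s$ of $G$, namely $\mcm(s,t)\in C_L(t,U)$, says exactly that $X(s,t)=t$; and since $\mcm(s,t)=\mcm(t,s)$ this relation is symmetric, so it reads equally as $X(t,s)=t$. This is precisely the one-step relation appearing in Definition~\ref{Compliant2} (used for $M$) and in Definition~\ref{revenant} (used for $N$). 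Hence the edges of $G$ are exactly the pairs linked by these one-step relations, and the whole proof becomes a comparison between the path-closure in $G$ and the one-step closures of those definitions.

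Before comparing closures I would record the graph-theoretic content of condition~(1). Writing $E:=\{(t,s)\in U\times U : X(s,t)=t\}$ for the one-step relation, I claim that a pair lies in $E$ if and only if $s$ is reachable from $t$ along oriented edges of $G$: indeed, whenever $X(s,t)=t$, either the edge $t\rightarrow s$ is present in $G$, or it was omitted by condition~(1), in which case a path from $t$ to $s$ already exists. Conversely every edge of $G$ lies in $E$ by condition~(2), so reachability in $G$ cannot leave the transitive closure of $E$. Thus the reachability relation of $G$ equals the transitive closure of $E$, condition~(1) performing only a transitive reduction that leaves reachability unchanged.

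Granting this, part (a) follows quickly. If $M$ is compliant and $t=t_0\rightarrow t_1\rightarrow\cdots\rightarrow t_k=s$ is a path in $G$ with $s\in M$, then each edge $t_i\rightarrow t_{i+1}$ satisfies $X(t_{i+1},t_i)=t_i$, so Definition~\ref{Compliant2}, applied from $t_k$ backwards, forces $t_{k-1},\dots,t_0\in M$; in particular $t\in M$, i.e. $M$ is $G$-compliant. Conversely, if $M$ is $G$-compliant and $X(s,t)=t$ with $s\in M$, then $(t,s)\in E$, so $s$ is reachable from $t$ in $G$ and $t\in M$; this is compliance in the sense of Definition~\ref{Compliant2}. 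Together with Proposition~\ref{IdealeGrafoFree} (and the equivalence of Definitions~\ref{compliant} and \ref{Compliant2}, which rests on Proposition~\ref{NoVerif}) this gives $J=\bigcup_{l\in M}C_L(l,U)\iff M$ is $G$-compliant. Part (b) is entirely parallel, propagating membership forward rather than backward: the same induction along a $G$-path yields revenant $\Rightarrow$ $G$-revenant, the reachability fact yields the converse, and Proposition~\ref{EscalierGrafoFree} closes the argument.

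The one genuinely delicate point, which I expect to be the main obstacle, is the self-referential condition~(1): I must argue that suppressing an edge precisely when its endpoints are already joined does not destroy any reachability, so that the transitive closure of $E$ is faithfully encoded by $G$. I would make this precise by constructing $G$ as a transitive reduction of $E$, adjoining the pairs of $E$ one at a time and discarding only those already implied by the edges adjoined so far; an induction on this construction shows that $G$ and $E$ have the same transitive closure, even in the presence of the cycles that may occur (as in Example~\ref{Tr}). Once this is settled, both equivalences are immediate from the one-step characterizations and the two cited propositions.
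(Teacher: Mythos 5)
Your proof is correct and follows essentially the same route as the paper: both arguments reduce the statement to Propositions \ref{IdealeGrafoFree} and \ref{EscalierGrafoFree} by identifying the edges of $G$ (via Proposition \ref{NoVerif}) with the one-step relation $X(s,t)=t$ of Definitions \ref{Compliant2} and \ref{revenant}, and then propagating membership backwards (for $M$) or forwards (for $N$) along paths. Your explicit handling of the self-referential condition (1) --- realizing $G$ as a transitive reduction of the full relation $E$ and checking that reachability in $G$ equals the transitive closure of $E$ --- is more careful than the paper, which treats this point informally (``there must be an edge from $t$ to $s$ unless they are not already connected by a path''; cf.\ Remark \ref{Complex}), but it is a refinement of the same argument rather than a different one.
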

\begin{proof}
\begin{itemize}
 \item[a.] ``$\Leftarrow$''\\ 
 Consider $s,s',t\in U$, $s \in M$ and suppose 
 $\mcm(s,s')\in C_L(t,U)$.
 By definition of $G$, there must be an edge from $t$ to $s$
 unless they are not already connected by a path, so by the hypothesis, we can conclude that
  $t \in M,$ whence $J=\bigcup_{l \in M} C_L(l,U)$ by Proposition \ref{IdealeGrafoFree}.
\\
``$\Rightarrow$''\\ 
Let $t,s \in U$, $s \in M$ and suppose there is a path from  $t$ to  
 $s$, i.e. $t \rightarrow s_r \rightarrow ... \rightarrow s_1 \rightarrow s$ is a path in $G$.
 \\
 Consider $s_1 \rightarrow s$; by definition of $G$, there is such an edge iff there is a term
 $s'' \in U$ s.t. $\lcm (s'',s) \in C_L(s_1,U)$. This implies (c.f. Proposition \ref{IdealeGrafoFree}) that $s_1 \in M$.
 Walking backwards this way in the path  $t \rightarrow s_r \rightarrow ... \rightarrow s_1
 \rightarrow s$ 
 of $G$, we get in a finite number of steps that $t \in M$.
 \item[b.] ``$\Rightarrow$''\\
 Let $t \in M$, $s \in U$ and suppose that 
 there is a path from  $t$ to  
 $s$, i.e. $t \rightarrow s_r \rightarrow ... \rightarrow s_1 \rightarrow s$ is a path in $G$.
 Since there is an edge $t \rightarrow s_r $ in $G$, then there is $s' \in U$ s.t.
 $\mcm(s_r,s')\in C_L(t,U)$ and, by Proposition \ref{NoVerif} 
 $\mcm(s_r,t)\in C_L(t,U)$ and so $s_r \in M$ by \ref{EscalierGrafoFree}. 
 \\
  Walking this way in the path  $t \rightarrow s_r \rightarrow ... \rightarrow s_1
 \rightarrow s$ 
 of $G$, e get in a finite number of steps that $s \in M$.\\
 ``$\Leftarrow$''\\
 Let us take $t \in M$, $s \in U$ and suppose $\mcm(t,s) \in C_L(t,U)$. This implies that
 in $G$ there is an edge $t \rightarrow s$ (or, at least, a path between them) and both
 the possibilities imply that $s \in M$. 
 \end{itemize}
  \end{proof}

  \begin{Remark}\label{Complex}
The rationale of imposing condition 1. to the definition of
 generalized Ufnarovsky-like graph is to have a minimal graph,
 with no redundant paths. The only condition 2. would make us construct
 a directed graph with many useless arrows.\\
 On the other hands, the only way we actually see to \emph{concretely construct} the generalized Ufnarovsky-like graph associated to a given relative involutive division $L$ is to construct the redundant graph using condition 2. and then eliminate the useless arrows.
 \\ 
  Looking at the above two propositions, we can notice that what we are interested is the path set 
   of the graph. In other words, condition 1. of its definition is not mandatory, but it is only set in order to
   avoid to consider useless edges.\\
   Considering a graph constructed using only condition 2., we may notice that the graph we defined is a 
   minimum equivalent digraph, i.e. the smallest subgraph preserving the path set.\\
   Finding a minimum equivalent digraph for a general directed graph is an NP-complete problem \cite{GGL}. 
   It would be hopeful to find the minimal graph in our particular cases with less effort, 
   or at least in some particular cases as we have already done for Pommaret division.

  \end{Remark}

\begin{example}\label{Facile}
Let us take $n=3$, $D=2$ and define the following division:
\begin{center}
\begin{tabular}{|c|c|}
\hline
Terms & Multiplicative Variables\\
\hline
$x^2$& $x$\\
\hline
$xy$& $x,y,z$\\
\hline
$y^2$ &$y,z$\\
\hline
$xz$ & $x,z$\\
\hline
$yz$ & $z$\\
\hline
$z^2$ & $z$\\
\hline
\end{tabular}
\end{center}
The corresponding graph is 

\begin{center}
\tikz[every node/.style={circle, draw, scale=.5, fill=orange!40}, scale=1.0]
{
\node (d) at (2,0) [] {\footnotesize $x^2$};
\node (a) at (4,0) [] {\footnotesize $xy$};
\node (e) at (6,0) [] {\footnotesize $y^2$};
\node (c) at (4,-2) [] {\footnotesize $xz$};
\node (f) at (6,-2) [] {\footnotesize $yz$};
\node (b) at (6,-4) [] {\footnotesize $z^2$};
\draw [-open triangle 90](a) --  (e);
\draw [-open triangle 90](a) -- (c);
\draw [-open triangle 90](e) --   (f);
\draw [-open triangle 90](f) --  (b);
\draw [-open triangle 90](c) --  (b);
\draw [-open triangle 90](c) -- (d);
}
  \end{center}
If $J=(M)$ and $xz\in M$, then $xy \in M$ and $J=(xz,xy)=C_L(xz,U)\cup C_L(xy,U)$.
 \\ On the same way, if we want to compute an order ideal and $xz \in N$ then both $x^2$ and $z^2$ 
 must be in $N$ and, with their three relative involutive cones (and, of course, 
 the elements of degree $0,1$), we have an order ideal.

\end{example}

\begin{example}\label{Tr2}
 Referring to the relative involutive division defined in example \ref{Tr}, we can see that, 
 if we restrict the graph only to the part involved in the argument introduced before, we have 
 \begin{center}
\tikz[every node/.style={circle, draw, scale=.5, fill=orange!40}, scale=1.0]
{
\node (a2) at (2,-2) [circle] {$x^2y$};
\node (a5) at (-2,0.5) [circle] {$x^2z $};
\node (a8) at (1,0.6) [circle] {$x^2t$};
\node (BIG) at (-2,-2) [circle] {$yzt$};
\draw [-open triangle 90](a5)--  (a2);
\draw [-open triangle 90](a8)--   (a5);
\draw [-open triangle 90](a2)--  (a8);
\draw [-open triangle 90](BIG)--  (a2);
}
  \end{center}
  As usually, we denote $J=(M)$ a monomial ideal. If we want $x^2y \in M$, then we need $x^2z,yzt \in M$.
Now, if $x^2z \in M$, we must have $x^2t \in M$ and, for having $x^2t \in M$
we only need $x^2y\in M$.
  In this case $J=(x^2y,x^2z,x^2t,yzt)$ is exactly the union of the cones of its generators.
\end{example}

 \appendix
 \section{Relative involutive divisions}\label{elencone}
In this section, we summarize all the possible relative involutive divisions
for $U:=\kT_D$, $D=2$, $n=3$ up to symmetries.

We begin remarking that the orbits of $U$ under the symmetric group ${\sf S}_3$ are $\{x^2,y^2.z^2\}$ and $\{xy,yz.zx\}$.

It we choose $x^2$ as the ``source`` with $\{x,y,z\}$ as non-multiplicative variables, the simmetry group becomes $\langle(y,z)\rangle$ and
we need to fix one and only one term with 2 non-multiplicative variables among $\{xy,xz\}$; fixing $xy$, whose multiplicative variables are necessarily $\{y,z\}$ we have no more symmetries but we have still to set a second (and last) element with $\{y,z\}$ with multiplicative variables.
We can freely choose any among $\{y^2,yz,z^2\}$. Thus we obtain
\\
\begin{minipage}{4cm}
\begin{center}
\begin{tabular}{|c|c|}
\hline
Terms & Multiplicative Variables\\
\hline
$x^2$& $x,y,z$\\
\hline
$xy$& $y,z$\\
\hline
$y^2$ &$y,z$\\
\hline
$xz$ & $z$\\
\hline
$yz$ & $z$\\
\hline
$z^2$ & $z$\\
\hline
\end{tabular}
\end{center}
\end{minipage}
\hspace{2cm}
\begin{minipage}{4cm}
 
\begin{center}
\tikz[every node/.style={circle, draw, scale=.5, fill=orange!40}, scale=1.0]
{
\node (d) at (2,0) [] {\footnotesize $x^2$};
\node (a) at (4,0) [] {\footnotesize $xy$};
\node (e) at (6,0) [] {\footnotesize $y^2$};
\node (c) at (4,-2) [] {\footnotesize $xz$};
\node (f) at (6,-2) [] {\footnotesize $yz$};
\node (b) at (6,-4) [] {\footnotesize $z^2$};
\draw [-open triangle 90](a) --  (e);
\draw [-open triangle 90](a) -- (c);
\draw [-open triangle 90](e) --   (f);
\draw [-open triangle 90](f) --  (b);
\draw [-open triangle 90](c) --  (b);
\draw [-open triangle 90](d) -- (a);
}
\end{center}
\end{minipage}
\\
\begin{minipage}{4cm}
\begin{center}
\begin{tabular}{|c|c|}
\hline
Terms & Multiplicative Variables\\
\hline
$x^2$& $x,y,z$\\
\hline
$xy$& $y,z$\\
\hline
$y^2$ &$y$\\
\hline
$xz$ & $z$\\
\hline
$yz$ & $y$\\
\hline
$z^2$ & $y,z$\\
\hline
\end{tabular}
\end{center}
\end{minipage}
\hspace{2cm}
\begin{minipage}{4cm}
 
\begin{center}
\tikz[every node/.style={circle, draw, scale=.5, fill=orange!40}, scale=1.0]
{
\node (d) at (2,0) [] {\footnotesize $x^2$};
\node (a) at (4,0) [] {\footnotesize $xy$};
\node (e) at (6,0) [] {\footnotesize $y^2$};
\node (c) at (4,-2) [] {\footnotesize $xz$};
\node (f) at (6,-2) [] {\footnotesize $yz$};
\node (b) at (6,-4) [] {\footnotesize $z^2$};
\draw [-open triangle 90](a) -- (c);
\draw [-open triangle 90](f) --   (e);
\draw [-open triangle 90](b) --  (f);
\draw [-open triangle 90](c) --  (b);
\draw [-open triangle 90](d) -- (a);
}
\end{center}
\end{minipage}
\\
\begin{minipage}{4cm}
\begin{center}
\begin{tabular}{|c|c|}
\hline
Terms & Multiplicative Variables\\
\hline
$x^2$&  $x,y,z$\\
\hline
$xy$& $y,z$\\
\hline
$y^2$ &$y$\\
\hline
$xz$ & $z$\\
\hline
$yz$ & $y,z$\\
\hline
$z^2$ & $z$\\
\hline
\end{tabular}
\end{center}
\end{minipage}
\hspace{2cm}
\begin{minipage}{4cm}
 
\begin{center}
\tikz[every node/.style={circle, draw, scale=.5, fill=orange!40}, scale=1.0]
{
\node (d) at (2,0) [] {\footnotesize $x^2$};
\node (a) at (4,0) [] {\footnotesize $xy$};
\node (e) at (6,0) [] {\footnotesize $y^2$};
\node (c) at (4,-2) [] {\footnotesize $xz$};
\node (f) at (6,-2) [] {\footnotesize $yz$};
\node (b) at (6,-4) [] {\footnotesize $z^2$};
 \draw [-open triangle 90](f) --  (e);
\draw [-open triangle 90](a) -- (c);
\draw [-open triangle 90](a) --   (f);
\draw [-open triangle 90](f) --  (b);
\draw [-open triangle 90](c) --  (b);
\draw [-open triangle 90](d) -- (a);
}
\end{center}
\end{minipage}
\\
\begin{minipage}{4cm}
\end{minipage}
\hspace{2cm}

Next if we choose the ``source'' to be $xy$ we have again the simmetry group $\langle(x,y)\rangle$ and the orbits
$\{xy\}, \{xz,yz\},\{x^2,y^2\},\{z^2\}$.
We need to choose both an element in $\{x^2,xz,z^2\}$ with $\{x,z\}$ as non multiplicative variables and an element in
$\{y^2,yz,z^2\}$ with $\{y,z\}$ as non multiplicative variables;
note that the two choices are symmetric; so we can use the last freedom  for deciding to first choose the element in 
$\{x^2,xz,z^2\}$ as the one with $\{x,z\}$ as non multiplicative variables and next adapt the choice of the element with $\{y,z\}$ as non multiplicative variables consequently in $\{y^2,yz,z^2\}$; we can thus  either
\begin{itemize}
 \item choose respectively $x^2$ and $y^2$ obtaining\\
 \begin{minipage}{4cm}
\begin{center}
\begin{tabular}{|c|c|}
\hline
Terms & Multiplicative Variables\\
\hline
$x^2$& $x,z$\\
\hline
$xy$& $x,y,z$\\
\hline
$y^2$ &$y,z$\\
\hline
$xz$ & $z$\\
\hline
$yz$ & $z$\\
\hline
$z^2$ & $z$\\
\hline
\end{tabular}
\end{center}
\end{minipage}
\hspace{2cm}
\begin{minipage}{4cm}
 
\begin{center}
\tikz[every node/.style={circle, draw, scale=.5, fill=orange!40}, scale=1.0]
{
\node (d) at (2,0) [] {\footnotesize $x^2$};
\node (a) at (4,0) [] {\footnotesize $xy$};
\node (e) at (6,0) [] {\footnotesize $y^2$};
\node (c) at (4,-2) [] {\footnotesize $xz$};
\node (f) at (6,-2) [] {\footnotesize $yz$};
\node (b) at (6,-4) [] {\footnotesize $z^2$};
 \draw [-open triangle 90](a) --  (d);
\draw [-open triangle 90](a) -- (e);
\draw [-open triangle 90](e) --   (f);
\draw [-open triangle 90](d) --  (c);
\draw [-open triangle 90](c) --  (b);
\draw [-open triangle 90](f) -- (b);
}
\end{center}
\end{minipage}

\item or choose respectively $xz$ and $yz$  obtaining\\
\begin{minipage}{4cm}
\begin{center}
\begin{tabular}{|c|c|}
\hline
Terms & Multiplicative Variables\\
\hline
$x^2$& $x$\\
\hline
$xy$& $x,y,z$\\
\hline
$y^2$ &$y$\\
\hline
$xz$ & $x,z$\\
\hline
$yz$ & $y,z$\\
\hline
$z^2$ & $z$\\
\hline
\end{tabular}
\end{center}
\end{minipage}
\hspace{2cm}
\begin{minipage}{4cm}
 
\begin{center}
\tikz[every node/.style={circle, draw, scale=.5, fill=orange!40}, scale=1.0]
{
\node (d) at (2,0) [] {\footnotesize $x^2$};
\node (a) at (4,0) [] {\footnotesize $xy$};
\node (e) at (6,0) [] {\footnotesize $y^2$};
\node (c) at (4,-2) [] {\footnotesize $xz$};
\node (f) at (6,-2) [] {\footnotesize $yz$};
\node (b) at (6,-4) [] {\footnotesize $z^2$};
 \draw [-open triangle 90](c) --  (d);
\draw [-open triangle 90](c) -- (b);
\draw [-open triangle 90](a) --   (f);
\draw [-open triangle 90](a) --  (c);
\draw [-open triangle 90](f) --  (e);
\draw [-open triangle 90](f) -- (b);
}
\end{center}
\end{minipage}

\end{itemize}
which of course preserves $\langle(x,y)\rangle$ as simmetry group; or
\begin{itemize}
\item choosing respectively $x^2$ and $yz$   obtaining\\
\begin{minipage}{4cm}
\begin{center}
\begin{tabular}{|c|c|}
\hline
Terms & Multiplicative Variables\\
\hline
$x^2$& $x,z$\\
\hline
$xy$& $x,y,z$\\
\hline
$y^2$ &$y$\\
\hline
$xz$ & $z$\\
\hline
$yz$ & $y,z$\\
\hline
$z^2$ & $z$\\
\hline
\end{tabular}
\end{center}
\end{minipage}
\hspace{2cm}
\begin{minipage}{4cm}
 
\begin{center}
\tikz[every node/.style={circle, draw, scale=.5, fill=orange!40}, scale=1.0]
{
\node (d) at (2,0) [] {\footnotesize $x^2$};
\node (a) at (4,0) [] {\footnotesize $xy$};
\node (e) at (6,0) [] {\footnotesize $y^2$};
\node (c) at (4,-2) [] {\footnotesize $xz$};
\node (f) at (6,-2) [] {\footnotesize $yz$};
\node (b) at (6,-4) [] {\footnotesize $z^2$};
 \draw [-open triangle 90](a) --  (d);
\draw [-open triangle 90](a) -- (f);
\draw [-open triangle 90](f) --   (b);
\draw [-open triangle 90](d) --  (c);
\draw [-open triangle 90](c) --  (b);
\draw [-open triangle 90](f) -- (e);
}
\end{center}
\end{minipage}

\item choose respectively $x^2$ and $z^2$ having no more freedom and obtaining\\
\begin{minipage}{4cm}
\begin{center}
\begin{tabular}{|c|c|}
\hline
Terms & Multiplicative Variables\\
\hline
$x^2$& $x,z$\\
\hline
$xy$& $x,y,z$\\
\hline
$y^2$ &$y$\\
\hline
$xz$ & $z$\\
\hline
$yz$ & $y$\\
\hline
$z^2$ & $y,z$\\
\hline
\end{tabular}
\end{center}
\end{minipage}
\hspace{2cm}
\begin{minipage}{4cm}
 
\begin{center}
\tikz[every node/.style={circle, draw, scale=.5, fill=orange!40}, scale=1.0]
{
\node (d) at (2,0) [] {\footnotesize $x^2$};
\node (a) at (4,0) [] {\footnotesize $xy$};
\node (e) at (6,0) [] {\footnotesize $y^2$};
\node (c) at (4,-2) [] {\footnotesize $xz$};
\node (f) at (6,-2) [] {\footnotesize $yz$};
\node (b) at (6,-4) [] {\footnotesize $z^2$};
 \draw [-open triangle 90](a) --  (d);
\draw [-open triangle 90](a) -- (f);
\draw [-open triangle 90](b) --   (f);
\draw [-open triangle 90](d) --  (c);
\draw [-open triangle 90](c) --  (b);
\draw [-open triangle 90](f) -- (e);
}
\end{center}
\end{minipage}

\item choose respectively $xz$ and $z^2$ having no more freedom and obtaining\\
\begin{minipage}{4cm}
\begin{center}
\begin{tabular}{|c|c|}
\hline
Terms & Multiplicative Variables\\
\hline
$x^2$& $x$\\
\hline
$xy$& $x,y,z$\\
\hline
$y^2$ &$y$\\
\hline
$xz$ & $x,z$\\
\hline
$yz$ & $y$\\
\hline
$z^2$ & $y,z$\\
\hline
\end{tabular}
\end{center}
\end{minipage}\hspace{2cm}
\begin{minipage}{4cm}
 
\begin{center}
\tikz[every node/.style={circle, draw, scale=.5, fill=orange!40}, scale=1.0]
{
\node (d) at (2,0) [] {\footnotesize $x^2$};
\node (a) at (4,0) [] {\footnotesize $xy$};
\node (e) at (6,0) [] {\footnotesize $y^2$};
\node (c) at (4,-2) [] {\footnotesize $xz$};
\node (f) at (6,-2) [] {\footnotesize $yz$};
\node (b) at (6,-4) [] {\footnotesize $z^2$};
 \draw [-open triangle 90](c) --  (d);
\draw [-open triangle 90](c) -- (a);
\draw [-open triangle 90](b) --   (f);
\draw [-open triangle 90](c) --  (b);
\draw [-open triangle 90](f) -- (e);
}
\end{center}
\end{minipage}
\end{itemize}

\section{Generalized Ufnarovsky-like graph for example \ref{Tr}.}

Example \ref{Tr}, shows that Ufnarovsky-like graph cannot be used for constructing
ideals and Groebner escaliers consistent with every relative involutive division. 
\\
In Section \ref{GUL}, we have defined the generalized Ufnarovsky-like graph and we proved that it solves
the problem in the general case.
\\
We show now the generalized Ufnarovsky-like graph for example \ref{Tr}.

\begin{center}
\tikz[every node/.style={circle, draw, scale=.5, fill=orange!40}, scale=1.0]
{
\node (a1) at (-3,9) [] {\footnotesize $x^3$};
\node (a2) at (-3,7) [] {\footnotesize $x^2y$};
\node (a5) at (-1,7) [] {\footnotesize $x^2z$};
\node (a8) at (0,5) [] {\footnotesize $x^2t$};
\node (a3) at (-3,2) [] {\footnotesize $xy^2$};
\node (a13) at (-1,2) [] {\footnotesize $xyz$};
\node (a6) at (1,2) [] {\footnotesize $xz^2$};
\node (a11) at (0,0) [] {\footnotesize $xyt$};
\node (a12) at (2,0) [] {\footnotesize $xzt$};
\node (a9) at (3,-2) [] {\footnotesize $xt^2$};
\node (a4) at (-3,-5) [] {\footnotesize $y^3$};
\node (a16) at (-1,-5) [] {\footnotesize $y^2z$};
\node (a14) at (1,-5) [] {\footnotesize $yz^2$};
\node (a7) at (3,-5) [] {\footnotesize $z^3$};
\node (a17) at (0,-7) [] {\footnotesize $y^2t$};
\node (a19) at (2,-7) [] {\footnotesize $yzt$};
\node (a18) at (4,-7) [] {\footnotesize $z^2t$};
\node (a15) at (3,-9) [] {\footnotesize $yt^2$};
\node (a20) at (5,-9) [] {\footnotesize $zt^2$};
\node (a10) at (6,-11) [] {\footnotesize $t^3$};
 \draw [-open triangle 90](a8) --  (a1);
 \draw [-open triangle 90](a5)--  (a2);
\draw [-open triangle 90](a2)--  (a8);
\draw [-open triangle 90](a2)--  (a17);
\draw [-open triangle 90](a3)--  (a4);
\draw [-open triangle 90](a5)--  (a3);
\draw [-open triangle 90](a11)--  (a3);
\draw [-open triangle 90](a3)--  (a13);
\draw [-open triangle 90](a4)--  (a16);
\draw [-open triangle 90](a17)--  (a4);
\draw [-open triangle 90](a8)--  (a5);
\draw [-open triangle 90](a6)--  (a7);
\draw [-open triangle 90](a13)--  (a6);
\draw [-open triangle 90](a14)--  (a7);
\draw [-open triangle 90](a18)--  (a7);
\draw [-open triangle 90](a8)--  (a12);
\draw [-open triangle 90](a9)--  (a11);
\draw [-open triangle 90](a12)--  (a9);
\draw [-open triangle 90](a15)--  (a10);
\draw [-open triangle 90](a20)--  (a10);
\draw [-open triangle 90](a11)--  (a17);
\draw [-open triangle 90](a12)--  (a18);
\draw [-open triangle 90](a13)--  (a14);
\draw [-open triangle 90](a16)--  (a14);
\draw [-open triangle 90](a17)--  (a15);
\draw [-open triangle 90](a18)--  (a20);
\draw [-open triangle 90](a19)--  (a2);
}
  \end{center}

\end{document}